\newcolumntype{L}{D{.}{.}{2,5}}
\newtheorem{theorem}{Theorem}[section]
\newtheorem{lemma}[theorem]{Lemma}
\newtheorem{cor}[theorem]{Corollary}
\newtheorem{proposition}[theorem]{Proposition}
\newtheorem{corollary}[theorem]{Corollary}
\newtheorem{assumption}{Assumption}
\newtheorem{example}{Example}
\newtheorem{remark}{Remark}
\newenvironment{definition}[1][Definition.]{\begin{trivlist}
\item[\hskip \labelsep {\bfseries #1}]}{\end{trivlist}}
\def\mds{\medskip}
\def\Rb{{\mathbb R}}
\def\Pc{{\mathcal P}}
\def\Fc{{\mathcal P}}
\def\Nc{{\mathcal N}}
\long\def\symbolfootnote[#1]#2{\begingroup%
\def\thefootnote{\fnsymbol{footnote}}\footnotetext[#1]{#2}\footnotemark[#1]\endgroup}
\newcounter{Fig}[figure]
\newcounter{Tab}[table]
   \samepage\vspace{0.2cm}
\newcommand{\XX}{\mbox{\boldmath $X$}}
\newcommand{\VV}{\mbox{\boldmath $V$}}
\newcommand{\YY}{\mbox{\boldmath $Y$}}
\newcommand{\ZZ}{\mbox{\boldmath $Z$}}
\newcommand{\BB}{\mbox{\boldmath $B$}}
\newcommand{\ee}{\mbox{\boldmath $e$}}
\newcommand{\xx}{\mbox{\boldmath $x$}}
\newcommand{\ttt}{\mbox{\boldmath $t$}}
\newcommand{\bbb}{\mbox{\boldmath $b$}}
\newcommand{\WW}{\mbox{\boldmath $W$}}
\newcommand{\cc}{\mbox{\boldmath $c$}}
\newcommand{\dd}{\mbox{\boldmath $d$}}
\newcommand{\pp}{\mbox{\boldmath $p$}}
\newcommand{\uu}{\mbox{\boldmath $u$}}
\newcommand{\vv}{\mbox{\boldmath $v$}}
\newcommand{\ww}{\mbox{\boldmath $w$}}
\newcommand{\zz}{\mbox{\boldmath $z$}}
\newcommand{\UU}{\mbox{\boldmath $U$}}
\def \Cb{{\mathbb C}}
\def \Db{{\mathbb D}}
\def \Eb{{\mathbb E}}
\def \Gb{{\mathbb G}}
\def \Hb{{\mathbb H}}
\def \Kb{{\mathbb K}}
\def \Lb{{\mathbb L}}
\def \Mb{{\mathbb M}}
\def \Nb{{\mathbb N}}
\def \Pb{{\mathbb P}}
\def \Rb{{\mathbb R}}
\def \Zb{{\mathbb Z}}
\def \Ac{{\mathcal A}}
\def \Bc{{\mathcal B}}
\def \Gc{{\mathcal G}}
\def \Zc{{\mathcal Z}}
\def \Ic{{\mathcal I}}
\def \Jc{{\mathcal J}}
\def \Wc{{\mathcal W}}
\def \Uc{{\mathcal U}}
\def \Vc{{\mathcal V}}
\def \Fc{{\mathcal F}}
\def \Cc{{\mathcal C}}
\def \Nc{{\mathcal N}}
\def \Pc{{\mathcal P}}
\def \Xc{{\mathcal X}}
\newcommand{\bqa}{\begin{eqnarray*}}
\newcommand{\eqa}{\end{eqnarray*}}
\newcommand{\bqan}{\begin{eqnarray}}
\newcommand{\eqan}{\end{eqnarray}}
\newcommand{\bqt}{\begin{quote}}
\newcommand{\eqt}{\end{quote}}
\newcommand{\bt}{\begin{tabbing}}
\newcommand{\et}{\end{tabbing}}
\newcommand{\bit}{\begin{itemize}}
\newcommand{\eit}{\end{itemize}}
\newcommand{\ben}{\begin{enumerate}}
\newcommand{\een}{\end{enumerate}}
\newcommand{\beq}{\begin{equation}}
\newcommand{\eeq}{\end{equation}}
\newcommand{\beqw}{\begin{equation*}}
\newcommand{\eeqw}{\end{equation*}}
\newcommand{\bdefi}{\begin{definition}}
\newcommand{\edefi}{\end{definition}}
\newcommand{\bpro}{\begin{proposition}}
\newcommand{\epro}{\end{proposition}}
\newcommand{\blem}{\begin{lemma}}
\newcommand{\elem}{\end{lemma}}
\newcommand{\bco}{\begin{corollary}}
\newcommand{\eco}{\end{corollary}}
\newcommand{\bdes}{\begin{description}}
\newcommand{\edes}{\end{description}}
\newcommand{\bre}{\begin{remark}}
\newcommand{\ere}{\end{remark}}
\newcommand{\eps}{\epsilon}
\def\mds{\medskip}
\def\1{{\mathbf 1}}
\def\0{{\mathbf 0}}
\begin{document}

\title{Sparse M-estimators in semi-parametric copula models}

\author{Benjamin Poignard \and Jean-David Fermanian}

\author{Jean-David Fermanian\footnote{Ensae-Crest, 5 avenue Henry le Chatelier, 91129 Palaiseau, France. E-mail address: jean-david.fermanian@ensae.fr}, Benjamin Poignard\footnote{Osaka University, Graduate School of Economics, 1-7, Machikaneyama, Toyonaka-Shi, Osaka-Fu, 560-0043, Japan. E-mail address: bpoignard@econ.osaka-u.ac.jp. Jointly affiliated at RIKEN Center for Advanced Intelligence Project (AIP), and CREST-LFA.}}

\date{\today}

\maketitle

\begin{abstract}
We study the large sample properties of sparse M-estimators in the presence of pseudo-observations. Our framework covers a broad class of semi-parametric copula models, for which the marginal distributions are unknown and replaced by their empirical counterparts. It is well known that the latter modification significantly alters the limiting laws compared to usual M-estimation. We establish the consistency and the asymptotic normality of our sparse penalized M-estimator and we prove the asymptotic oracle property with pseudo-observations, possibly in the case when the number of parameters is diverging. Our framework allows to manage copula-based loss functions that are potentially unbounded. Additionally, we state the weak limit of multivariate rank statistics for an arbitrary dimension and the weak convergence of empirical copula processes indexed by maps. We apply our inference method to Canonical Maximum Likelihood losses with Gaussian copulas, mixtures of copulas or conditional copulas. \textcolor{black}{The theoretical results are illustrated by two numerical experiments}.
\end{abstract}

\medskip

\noindent

\textbf{Key words}: Copulas; M-estimation; Pseudo-observations; Sparsity.

\section{Introduction}

In this paper, we consider the parsimonious estimation of copula models within the semi-parametric framework: margins are left unspecified and a parametric copula model is assumed. The sparse assumption is motivated by the model complexity that occurs in copula modelling, where the parameterization may require the estimation of a large number of parameters. For instance, the variance-covariance matrix of a $d$-dimensional Gaussian copula involves the estimation of $d(d-1)/2$ parameters, the components of an unknown correlation matrix; in single-index copula, the underlying conditional copula is parameterized through a link function that depends on a potentially large number of covariates and thus parameters, which may not be all relevant for describing these conditional distributions. Since the seminal work of \cite{fan2001variable}, a significant amount of literature dedicated to sparsity-based M-estimators has been flourishing in a broad range of settings. In contrast, the sparse estimation of copula based M-estimators has benefited from a very limited attention so far. 
In~\cite{cai2014mixture}, the authors considered a mixture of copulas with a joint estimation of the weight parameters and the copula parameters, 
while penalizing the former ones only. However, a strong limitation of their approach is the parametric assumption formulated for the marginals, which greatly simplifies the large sample inference.~\cite{yang2019semiparametric} specified a penalized estimating equation-based estimator for single-index copula models and derived the corresponding large sample properties but assuming known margins. A theory covering the sparse estimation for semi-parametric copulas is an important missing stone in the literature. One of the key difficulties is the treatment of the values close to the boundaries of $[0,1]^d$, where some loss functions potentially ``explode''. 
The latter situation is not pathological. It often occurs for the standard Canonical Maximum Likelihood method or CML (see, e.g.,~\cite{genest1995,shi1995,tsukahara2005}) and many usual copula log-densities, as pointed out in~\cite{segers2012asymptotics} in particular.
Since the seminal works of~\cite{ruymgaart1972,ruymgaart1974}, the large sample properties of the empirical copula process $\widehat{\Cb}_n$ were established by, e.g.,~\cite{fermanian2004}, and such properties were applied to the asymptotic analysis of copula-based maximum likelihood estimators with pseudo-observations. In that case, the empirical copula process is indexed by the likelihood function: see~\cite{tsukahara2005,chen2005pseudo}, who considered some regularity conditions on the likelihood function to manage the values close to the boundaries of $[0,1]^d$. Similar conditions were stated likewise in~\cite{chen2005pseudo,zhang2016,hamori2019}, among others, where some bracketing number conditions on a suitable class of functions are assumed. These works share a similar spirit with~\cite{vaart2007,dehling2014}, who considered a general framework of empirical processes indexed by classes of functions under entropy conditions. Thanks to a general integration by parts formula,~\cite{radulovic2017weak} established the conditions for the weak convergence of the empirical copula process $\int f \text{d}\widehat{\Cb}_n$ indexed by a class of functions $f \in \mathcal{F}$ of bounded variation, the so-called Hardy–Krause variation. Their results do not require explicit entropy conditions on the class of functions.
In the same vein,~\cite{berghaus2017weak} assumed similar regularity conditions on the indexing functions but restricted their analysis to the two-dimensional copula case. It is worth mentioning that the techniques for stating the large sample analysis of semi-parametric copulas amply differ from the fully parametric viewpoint, for which the classical M-estimation theory obviously applies.

The present paper is then motivated by the lack of links between sparsity and semi-parametric copulas. Our asymptotic analysis for sparse M-estimators in the context of semi-parametric copulas builds upon the theoretical frameworks of~\cite{berghaus2017weak} and~\cite{radulovic2017weak}. The contribution of our paper is fourfold: first, we provide the asymptotic theory (consistency, oracle property for variable selection and asymptotic normality in the same spirit as \cite{fan2001variable}) for general penalized semi-parametric copula models, where the margins are estimated by their empirical counterparts. In particular, our setting includes the Canonical Maximum Likelihood method. Second, these asymptotic results are extended for (a sequence of) copula models in large dimensions, a framework that corresponds to the diverging dimension case, as in, e.g., \cite{fan2004nonconcave}.
Third, we prove the asymptotic normality of multivariate-rank statistics for any arbitrary dimension $d\geq 2$, extending Theorem 3.3 of~\cite{berghaus2017weak}. Fourth and finally, we prove the weak convergence of the empirical copula process indexed by functions of bounded variation, extending Theorem 5 of~\cite{radulovic2017weak} to cover the prevailing situation of unbounded copula densities. We emphasize that our theory is not restricted to i.i.d. data and potentially covers the case of dependent observations, as in \cite{radulovic2017weak}. 

\mds

The rest of the paper is organized as follows. Section~\ref{framework} details the framework and fix our notations. The large sample properties of our penalized estimator are provided in Section~\ref{asym_prop}. The situation of conditional copulas is managed in Section~\ref{section_cond_copulas}. Section~\ref{applications} discusses some examples and two simulated experiments to illustrate the relevance of our method. The theoretical results about multivariate rank statistics and empirical copula processes are stated in Appendix \ref{technicalities}. All the proofs, the theoretical results in the case of a diverging number of parameters and additional simulated experiments are provided in the Appendix.

\section{The framework}\label{framework}


\textcolor{black}{This section details the sparse estimation framework for copula models when the marginal distributions are non-parametrically managed. We} consider a sample of $n$ realizations of a random vector $\XX\in \Rb^d$, $\XX:=(X_{1},\ldots,X_{d})$. This sample is denoted as $\Xc_n =(\XX_{1},\ldots,\XX_{n})$. The observations may be dependent or not.
As usual in the copula world, we are more interested in the ``reduced'' random variables $U_{k}=F_{k}(X_{k})$, $k\in \{1,\ldots,d\}$, where
$F_{k}$ denotes the cumulative distribution function (c.d.f.) of $X_{k}$. \textcolor{black}{Throughout this paper, we make the blanket assumption that all $X_{k}$ have continuous marginals. Then,} the variables $U_k$ are uniformly distributed on $[0,1]$ and the joint law of $\UU:=(U_1,\ldots,U_d)$ is the uniquely defined copula of $\XX$ denoted by $C$.
To study the latter copula, it would be tempting to work with the sample $\Uc_n :=(\UU_1,\ldots,\UU_n)$ instead of $\Xc_n$.
Unfortunately, since the marginal c.d.f.s' $X_k$ are unknown in general, \textcolor{black}{this is still the case of $\Uc_n$, and the marginal c.d.f.s'} have to be replaced by consistent estimates.
Therefore, it is common to build a sample of pseudo-observations
$\widehat{\UU}_i= (\widehat{U}_{i,1},\ldots,\widehat{U}_{i,d})$, $i\in\{1,\ldots,n\}$, obtained from the initial sample $\Xc_n$. Here and as usual, set $\widehat{U}_{i,k} = F_{n,k}(X_{i,k})$
for every $i\in\{1,\ldots,n\}$ and every $k\in\{1,\ldots,d\}$, using the $k$-th re-scaled empirical c.d.f.
$ F_{n,k}(s):= (n+1)^{-1} \sum^n_{i=1} \mathbf{1}\{X_{i,k} \leq s\}.$
We will denote by $G_{n,k}$, $k\in\{1,\ldots,d\}$, the empirical c.d.f. of the (unobservable) random variable $U_k$, i.e.
$ G_{n,k}(u):= (n+1)^{-1} \sum^n_{i=1} \mathbf{1}\{F_k(X_{i,k}) \leq u\}.$
The empirical c.d.f. of $\UU$ is $G_n$, i.e.
$ G_{n}(\uu):= (n+1)^{-1} \sum^n_{i=1} \mathbf{1}\{\UU_i \leq \uu\}$ for any $\uu\in [0,1]^d$. We denote by $\alpha_n$ the usual empirical process associated with the sample $(\UU_i)_{i=1,\ldots,n}$, i.e.
$$ \alpha_n(\uu):=\sqrt{n}\big( G_n-C\big)(\uu)=\sqrt{n}\Big\{ \frac{1}{n}\sum_{i=1}^n \1 \big( \UU_i \leq \uu \big) - C(\uu)\Big\}.$$
The natural estimator of the true underlying copula $C$, i.e. the c.d.f. of $\UU$, is the empirical copula map
\begin{equation}
\widehat C_n(\uu) := \frac{1}{n}\overset{n}{\underset{i=1}{\sum}}\mathbf{1}\Big\{F_{n,1}(X_{i,1})\leq u_1,\ldots,F_{n,d}(X_{i,d})\leq u_d\Big\},
\label{empir_copula}
\end{equation}
and the associated empirical copula process is $\widehat \Cb_n :=\sqrt{n}(\widehat C_n - C)$.

\mds

\textcolor{black}{Hereafter, we select a parametric family of copulas $C_\theta$, $\theta \in \Theta\subset \Rb^p$, and we assume it contains the true copula $C$: there exists $\theta_0$ (the ``true value'' of the parameter) s.t. $C=C_{\theta_0}$.}
We want to cover the usual case of semi-parametric dependence models, for which there is an orthogonality condition of the type
$ \Eb\big[ \nabla_\theta \ell(\UU;\theta_0) \big]=0,$
for some family of loss functions $\ell :(0,1)^d \times \Theta\rightarrow \Rb$.
\textcolor{black}{The dimension $p$ of the copula parameter $\theta$ will be fixed hereafter. 
In the Appendix, it will be allowed to tend to the infinity with the sample size $n$.}
The function $\ell$ is usually defined as a \textcolor{black}{quadratic loss} or minus a log-likelihood function.
\textcolor{black}{Note that $\ell$ has not to be defined on the boundaries of $[0,1]^d$ at this stage because the law of $\UU$ was assumed to be continuous. 
Moreover, an important contribution of the paper will be to deal with some maps $\ell$ that cannot be continuously extended on $[0,1]^d$.}


\mds

For the sake of the estimation of $\theta_0$, let us specify a statistical criterion.
Consider a global loss function $\Lb_n$ from $\Theta \times (0,1)^{dn}$ to $\Rb$.
The value $\Lb_n(\theta;\uu_1,\ldots,\uu_n )$ evaluates the quality of the ``fit'' 
given $\UU_i=\uu_i$ for every $i\in\{1,\ldots,n\}$ and under $\Pb_{\theta}$.
Hereafter, we assume there exists a continuous 
function $\ell : \Theta \times (0,1)^{d} \rightarrow \Rb$ such that 
 \begin{equation}
 \Lb_n(\theta;\uu_1,\ldots,\uu_n) := \overset{n}{\underset{i=1}{\sum}} \ell(\theta;\uu_i),
\label{loss_as_a_sum}
 \end{equation}
for every $\theta\in\Theta$ and every $(\uu_1,\ldots,\uu_n)$ in $(0,1)^{dn}$.
As usual for the inference of semiparametric copula models, the empirical loss $\Lb_n(\theta;\Uc_n)$ cannot be calculated since we do not observe the realizations of $\UU$ in practice. Therefore, invoking the ``pseudo-sample'' $\widehat{\Uc}_n := (\widehat{\UU}_1,\ldots,\widehat{\UU}_n)$, the empirical loss $\Lb_n(\theta;\Uc_n)$ will be approximated by $\Lb_n(\theta;\widehat\Uc_n)$, a quantity called ``pseudo-empirical'' loss function.  

\begin{example}
A key example is the Canonical Maximum Likelihood method: the law of $\UU$ (\textcolor{black}{i.e. the copula of $\XX$)} belongs to a parametric family $\Pc:=\{\Pb_\theta, \,\theta \in \Theta\}$ and $C=C_{\theta_0}$. There, for i.i.d. data, $\ell(\uu;\theta)=-\ln c(\uu;\theta)$, minus the log-copula density of $C_\theta$ w.r.t. the Lebesgue measure on $(0,1)^d$.

\end{example}


Now, \textcolor{black}{we assume} that the unknown parameter is sparse and we introduce a penalization term. Our criterion becomes
\begin{equation} \label{obj_crit}
\widehat{\theta} \,\textcolor{black}{\in} \,\underset{\theta \in \Theta}{\arg \; \min} \; \Big\{ \Lb_n(\theta;\widehat{\Uc}_n) + n \overset{p}{\underset{k=1}{\sum}}\pp(\lambda_n,|\theta_k|)\Big\},
\end{equation}
\textcolor{black}{when such a minimizer exists}. Here, \textcolor{black}{$\pp(\lambda_n,x)$, for $x \geq 0$, is a penalty function, where} $\lambda_n\geq 0$ is a tuning parameter which depends on the sample size and enforces a particular type of sparse structure. \textcolor{black}{Throughout the} paper, we will implicitly work under the following assumption. 
\addtocounter{assumption}{-1}
\begin{assumption}
\textcolor{black}{The parameter space $\Theta$ is a borelian subset of $\Rb^p$. 
The function $\theta \mapsto \Eb[\ell(\theta;\UU)]$ is uniquely minimized on $\Theta$ at $\theta = \theta_0$, and an open neighborhood of $\theta_0$ is contained in $\Theta$.}
\label{assump_theta0}
\end{assumption}
\textcolor{black}{Note that only the uniqueness of $\theta_0$ is required but not that of $\widehat{\theta}$. In~\cite{fan2001variable,fan2004nonconcave}, $\Theta$ is assumed to be an open subset. Nonetheless, it may happen that $\theta_0$ belongs to the frontier of $\Theta $. This may be the case for the estimation of weights in mixture models (see Example \ref{ex_copula_mixture} below), e.g. Actually, our results will apply even if $\Theta$ does not contain an open neighborhood of $\theta_0$. Indeed, it is sufficient that there exists a convex open neighborhood of $\theta_0$ in $\Theta$: for some open ball $B_\delta (\theta_0)\subset \Rb^p$ centered at $\theta_0$ and with a radius $\delta>0$ and for every parameter $\theta_1 \in \Theta \cap B_\delta(\theta_0)$, every element of the segment $t\theta_0+(1-t)\theta_1$, $t\in [0,1]$  belongs to $\Theta$ and $\theta_1$ is the limit of a sequence of elements in $B_\delta (\theta_0)\cap \text{Int}(\Theta)$, where $\text{Int}(\Theta)$ is the interior of $\Theta$. Therefore, we will define the partial derivatives of any map $h:\Theta \mapsto \Rb$ at some point on the frontier of $\Theta$ (in particular $\theta_0$) by continuity. For example, when the derivative of $\theta \mapsto \ell(\theta;\uu)$ exists in the interior of $\Theta$, for some $\uu$, it will be defined at $\theta_0$ by $ \nabla_{\theta } \ell(\theta_0;\uu):= \lim_{\theta \rightarrow \theta_0, \theta \in \Theta} \nabla_{\theta } \ell(\theta;\uu)$, assuming the latter limit exists. To lighten the presentation, we will keep Assumption~\ref{assump_theta0} as above. By slightly strengthening some regularity assumptions, the case of $\theta_0$ on the boundary of $\Theta$ will be straightforwardly
 obtained (essentially by imposing the continuity of some derivatives around $\theta_0$). }


\mds


Some well-known penalty functions are the LASSO, \textcolor{black}{where} $\pp(\lambda,\textcolor{black}{x})=\lambda \textcolor{black}{x}$ \textcolor{black}{for every $x \geq 0$}, and the non-convex SCAD and MCP. The SCAD penalty of \cite{fan2001variable} is defined as: \textcolor{black}{for every $x \geq 0$},
\begin{equation*}
\pp(\lambda,\textcolor{black}{x}) = \begin{cases}
\lambda \textcolor{black}{x}, & \text{for} \;  \textcolor{black}{x} \leq \lambda, \\
\frac{1}{2(a_{\text{scad}}-1)}\big(2 a_{\text{scad}}\lambda \textcolor{black}{x}-\textcolor{black}{x}^2-\lambda^2\big), &  \text{for} \; \lambda \textcolor{black}{< x} \leq a_{\text{scad}} \lambda, \\
(a_{\text{scad}}+1)\lambda^2/2, & \text{for} \;  \textcolor{black}{x} > a_{\text{scad}}\lambda,
\end{cases}
\end{equation*}
where $a_{\text{scad}}>2$. The MCP due to~\cite{zhang2010} is defined for $b_{\text{mcp}}>0$ as follows: \textcolor{black}{for every $x\geq 0$}, 
\begin{equation*}
\pp(\lambda,\textcolor{black}{x})  = \big(\lambda \textcolor{black}{x}-\frac{\textcolor{black}{x}^2}{2 b_{\textcolor{black}{\text{mcp}}}}\big) \mathbf{1}\big(\textcolor{black}{x\leq b_{\text{mcp}}\lambda}\big) + \lambda^2\frac{b_{\text{mcp}}}{2} \mathbf{1}
\big(\textcolor{black}{x> b_{\text{mcp}}\lambda}\big).
\end{equation*}

\textcolor{black}{Note that when $a_{\text{scad}} \rightarrow \infty$ (resp. $b_{\text{mcp}} \rightarrow \infty$), the SCAD (resp. MCP) penalty behaves as the LASSO penalty since all coefficients of $\theta$ are equally penalized.}
The idea of sparsity for copulas naturally applies to a broad range of situations. 
In such cases, the parameter value zero usually plays a particular role, possibly after reparameterisation.
This is in line with the usual previously cited penalties. 
\begin{example}\label{ex_zero_copula_coeff}
 Consider a Gaussian copula model in dimension $d>>1$, whose parameter is a correlation matrix $\Sigma $. 
The description of all the underlying dependencies between the components of $\UU$ is a rather painful task. Then, the sparsity of $\Sigma$ becomes a nice property. Indeed, the independence between two components of $\UU$ is equivalent to the nullity of their corresponding coefficients in $\Sigma$. 
\end{example}
\begin{example}\label{ex_copula_mixture}
The inference of mixtures of copulas may justify the application of a penalty function. 
Indeed, consider a set of known $d$-dimensional copulas $C^{(k)}$, $k\in\{1,\ldots,p+1\}$.
In practice, we could try to approximate the true underlying copula $C$ by a 
mixture $\sum^{p+1}_{k=1}\pi_k C^{(k)}, \sum^{p+1}_{k=1} \pi_k=1$, $\pi_k\in [0,1]$ for every $k$.
\textcolor{black}{Here, the underlying parameter is the vector of weights $\theta=(\pi_1,\ldots,\pi_p)$ and $\Theta$ is defined as
$ \normalfont\Theta_{\text{mixt},p}:=\{(\pi_1,\ldots,\pi_{p}); \pi_j\in[0,1],j\in\{1,\ldots,p\}; \sum_{j=1}^{p} \pi_j \leq 1 \} $.}
If a weight is estimated as zero, its corresponding copula does not matter to approximate $C$. The latter model is generally misspecified, but our theory will apply even in this case, interpreting $\widehat\theta$ in~(\ref{obj_crit}) as an estimator of a ``pseudo-true'' value $\theta_0$. If we apply the CML method, $\Lb_n(\theta;\widehat\Uc_n)$ is the log-copula density of $\sum^{p}_{k=1}\theta_k C^{(k)}+(1-\sum_{k=1}^p \theta_k) C^{(p+1)}$. When some or all copulas $C^{(k)}$ depend on unknown parameters that need to be estimated in addition to the weights, the penalty function could also be applied to these copula parameters. 
\end{example}
Dealing with conditional copulas (\cite{fermanian2012time}, e.g.), 
\textcolor{black}{our framework will be slightly modified.}
Now, the law of a random vector $\XX\in \Rb^d$ knowing the vector of covariates $\ZZ=\zz \in \Rb^m$
is given by a parametric conditional copula whose parameter depends on a known map of $\zz$ and is denoted $\theta(\zz;\beta)$, $\beta \in \Rb^q$.
Beside, the law of the margins $X_k$, $k\in \{1,\ldots,d\}$, given $\ZZ=\zz$ are unknown and we assume they do not depend on $\zz$.
In other words, the conditional law of $\XX$ given $\ZZ$ is assumed to be
$$ \Pb(\XX \leq \xx | \ZZ=\zz)=C_{\theta(\zz;\beta)} (F_1(x_1),\ldots,F_d(x_d)\big), \; \xx\in \Rb^d, \zz\in \Rb^m.$$
Therefore, as for the CML method and in the i.i.d. case, an estimator of $\beta$ would be
$$ \widehat \beta \,\textcolor{black}{\in} \,\arg\max_\beta 
\sum_{i=1}^n \ln c_{\theta(\ZZ_i;\beta)} \big(\widehat{U}_{i,1},\ldots,\widehat{U}_{i,d} \big).$$
Surprisingly and to the best of our knowledge, the asymptotic theory of such estimators has apparently not been explicitly stated in the literature until now.
This will be the topic of Section~\ref{section_cond_copulas}.
\begin{example}\label{ex_single_index_sparsity}
Sparsity naturally applies to single-index copulas (see, e.g., \cite{fermanian2018}). The function $\pp(\lambda_n,\cdot)$ is now specified with respect to the underlying $\beta$ parameter. In other words, sparsity refers to the situation where only a (small) 
subset of the $\ZZ$ components is relevant to describe the dependencies between the $\XX$ components given $\ZZ$. 
Consider the conditional Gaussian copulas, as Example 4 of \cite{fermanian2018}. Here, the correlation matrix $\Sigma$ would be a function of $\ZZ=\zz$. It may be rewritten $\Sigma(\zz)=\big[\sin\big(\frac{\pi}{2}\tau_{kl}(\zz^\top\beta)\big)\big]_{1\leq k,l \leq d}$, where $\tau_{kl}(\zz^\top\beta)$ denotes the conditional Kendall's tau of $(X_k,X_l)$ given $\ZZ=\zz$. 
\end{example}

\section{Asymptotic properties}\label{asym_prop}

To prove the asymptotic results, we consider two sets of assumptions: one is related to the loss function; the other one concerns the penalty function. First, define the support of the true parameter as $\Ac:=\big\{k: \theta_{0,k}\neq 0, k = 1,\ldots,p\big\}$. 
We will implicitly assume a sparsity assumption, i.e. the cardinal of $\Ac$ is ''significantly'' smaller than $p$.
\begin{assumption}\label{regularity_assumption}
The map $\theta\mapsto \ell(\theta;\uu)$ is thrice differentiable on $\Theta$, for every $\uu \in (0,1)^d$. 
\textcolor{black}{The parameter $\theta_0$ satisfies the first order condition $\Eb[\nabla_{\theta}\ell(\theta_0;\UU)]=0$.}
Moreover, $\Hb := \Eb[\nabla^2_{\theta\theta^\top}\ell(\theta_0;\UU)]$ and $\Mb := \Eb[\nabla_{\theta}\ell(\theta_0;\UU) \nabla_{\theta^\top}\ell(\theta_0;\UU)]$ exist and are positive definite.
Finally, for every $\eps>0$, there exists a constant $K_\epsilon$ such that
\begin{equation*}
\sup_{\{\theta; \|\theta - \theta_0\|<\eps\}} \;\sup_{j,l,m}\big|\Eb[\partial^3_{\theta_j\theta_l\theta_m} \ell( \theta;\UU)] \big| \leq K_\epsilon.
\end{equation*}
\end{assumption}
\begin{assumption}
\label{cond_reg_copula}
For any $j\in \{1,\ldots,d\}$, the copula partial derivative $\dot C_j(\uu):=\partial C(\uu)/\partial u_j$ exists and is continuous on $V_j:=\{\uu\in [0,1]^d; u_j\in (0,1) \}$. For every couple $(j_1,j_2)\in \{1,\ldots,d\}^2$, the second-order partial derivative $\ddot C_{j_1,j_2}(\uu):=\partial^2 C(\uu)/\partial u_{j_1}\partial u_{j_2}$ exists and is continuous
on $V_{j_1}\cap V_{j_2}$. Moreover, there exists a positive constant $K$ such that
\begin{equation}
 \big|\ddot C_{j_1,j_2}(\uu) \big|\leq K \min \Big( \frac{1}{u_{j_1}(1-u_{j_1})},  \frac{1}{u_{j_2}(1-u_{j_2})} \Big),\; \uu\in V_{j_1}\cap V_{j_2}.  
\label{cond_2nd_der_cop}    
\end{equation}
\end{assumption}
When $\uu$ does not belong to $(0,1)^d$ (i.e. when one of its components is zero or one), we have defined 
$\dot C_j(\uu):=\lim\sup_{t\rightarrow 0} \big\{ C(\uu+ t\ee_j) - C(\uu)\big\} /t$.
It has been pointed out in~\cite{segers2012asymptotics} 
that Condition~\ref{cond_reg_copula} is satisfied for bivariate Gaussian and bivariate extreme-value copulas in particular.
We formally state this property for $d$-dimensional Gaussian copulas in Section \ref{verif_regul_Gaussian_cop} of the Appendix.


\begin{assumption}
\textcolor{black}{For some $\omega$}, the family of maps $\Fc=\Fc_1\cup\Fc_2\cup\Fc_3$ from $(0,1)^d$ to $\Rb$ is $g_\omega$-regular (see Definition~\ref{def_gomega_reg} in Appendix \ref{technicalities}), with
$$\Fc_1:=\{ f:\uu\mapsto \partial_{\theta_k}\ell(\theta_0;\uu); k=1,\ldots,p\},$$
$$\Fc_2:=\{ f:\uu\mapsto \partial^2_{\theta_k,\theta_l}\ell(\theta_0;\uu); k,l=1,\ldots,p\},\;\text{and}$$
$$\Fc_3:=\{ f:\uu\mapsto \partial^3_{\theta_k,\theta_l,\theta_j}\ell(\theta;\uu); k,l,j=1,\ldots,p, \; \|\theta-\theta_0\|<K\},$$
for some constant $K>0$.
\label{assumption_regularity}
\end{assumption}

We will denote by $\partial_2\pp(\lambda,x)$ (resp. $\partial^2_{2,2}\pp(\lambda,x)$) the first order (resp. second order) derivative of $x\mapsto \pp(\lambda,x)$, for any $\lambda$.

\begin{assumption}\label{assumption_regularizer}
Defining
$$a_n := \max_{1\leq j\leq p} \big\{ \partial_2  \pp(\lambda_n,|\theta_{0,j}|),\theta_{0,j}\neq 0 \big\}\;\text{and}
\;b_n := \max_{1\leq j\leq p} \big\{ \partial^2_{2,2} \pp(\lambda_n,|\theta_{0,j}|),\theta_{0,j}\neq 0 \big\},$$
assume that $a_n\rightarrow 0$ and $b_n\rightarrow 0$ when $n\rightarrow \infty$. Moreover, there exist constants $M$ and $\bar M$ such that
$$ | \partial^2_{2,2} \pp(\lambda_n,\theta_1) - \partial^2_{2,2} \pp(\lambda_n,\theta_2) |\leq M |\theta_1 -\theta_2 |,$$
for any real numbers $\theta_1,\theta_2$ such that $\theta_1,\theta_2 > \bar M\lambda_n  $.
\end{assumption}

Assumption~\ref{regularity_assumption} is a standard regularity condition for asymptotically normal M-estimators. Assumptions~\ref{cond_reg_copula} is a smoothness condition on the copula $C$ and is similar to Condition 4.1 of~\cite{segers2012asymptotics} and Condition 2.1 of~\cite{berghaus2017weak}. It ensures that the second-order derivatives with respect to $\uu$ do not explode ``too rapidly'' when $\uu$ approaches the boundaries of $[0,1]^d$. Assumption~\ref{assumption_regularity} is related to the indexing function of the copula process, here by the first, second and third order derivatives of the loss function $\ell(\theta;.)$. \textcolor{black}{The $g_{\omega}$-regularity of these functions ensures that they are of bounded Hardy-Krause variation - similar to assumption F of~\cite{radulovic2017weak} - together with some integrability conditions. Here, $\omega$ is some fixed number in $(0,1/2)$ entering in the definition of 
a weight function $\min_k \min(u_k,1-u_k)^\omega$, as specified in Definition \ref{def_gomega_reg}, point (ii). Such a weight function is related to 
the theory of weighted empirical processes applied in~\cite{berghaus2017weak}.} Assumption~\ref{assumption_regularizer} is dedicated to the regularity of the penalty function, and includes conditions in the same vein as~\cite{fan2004nonconcave}, Assumption 3.1.1. 
\textcolor{black}{Note that the LASSO, the SCAD and the MCP penalties fulfill Assumption~\ref{assumption_regularizer}}. Our first result establishes the existence of a consistent penalized M-estimator.
\begin{theorem}\label{bound_proba}
\textcolor{black}{Suppose Assumptions \ref{oscillation_modulus_assump}-\ref{tail_empir_processes} given in Appendix \ref{technicalities} are satisfied. Let some $$\omega \in \Big(0,\min\big\{\frac{\textcolor{black}{\kappa}_1}{2(1-\textcolor{black}{\kappa}_1)},\frac{\textcolor{black}{\kappa}_2}{2(1-\textcolor{black}{\kappa}_2)},\textcolor{black}{\kappa}_3 - \frac{1}{2}\big\}\Big),$$ 
and suppose Assumptions~\ref{assump_theta0}-\ref{assumption_regularizer} hold \textcolor{black}{for this $\omega$}.} 
Then, there exists a sequence of estimators $\widehat{\theta}$ \textcolor{black}{as defined in (\ref{obj_crit}) which satisfies} 
\begin{equation*}
\|\widehat{\theta}-\theta_0\|_2 = O_p\Big(\ln(\ln n) n^{-1/2} + a_{n}\Big).
\end{equation*}
\end{theorem}
The proof is postponed in Section \ref{proof_results_sparsity} of the Appendix. \textcolor{black}{The factor $\ln(\ln n)$ could be replaced by any sequence that tends to the infinity with $n$, as in Corollary~\ref{cor_GC}. It has been arbitrarily chosen for convenience}. \textcolor{black}{We will apply this rule throughout the article.}

\mds 

We now show that the penalized estimator $\widehat{\theta}$ satisfies the oracle property in the sense of \cite{fan2001variable}: the true support can be recovered asymptotically and the non-zero estimated coefficients are asymptotically normal.
Denote by $\widehat\Ac:=\big\{k: \widehat\theta_{k}\neq 0, k = 1,\ldots,p\big\}$ the (random) support of our estimator.
The following theorem states the main results of this section. It uses some standard notations for concatenation of sub-vectors, as recalled in Appendix \ref{technicalities}. 
\begin{theorem}\label{oracle_property}
In addition to the assumptions of Theorem~\ref{bound_proba},
assume that the \textcolor{black}{penalty function} satisfies $\underset{n \rightarrow \infty}{\lim \, \inf} \; \underset{x \rightarrow 0^+}{\lim \, \inf} \; \lambda^{-1}_n \partial_2\pp(\lambda_n,x) > 0$. Moreover, assume $\lambda_n \rightarrow 0$, $\sqrt{n} \lambda_n (\ln(\ln n))^{-1} \rightarrow \infty$ and $a_n=o(\lambda_n)$.
Then, the consistent estimator $\widehat{\theta}$ of Theorem~\ref{bound_proba} satisfies the two following properties.
\begin{itemize}
    \item[(i)] Support recovery: $\underset{n \rightarrow \infty}{\lim} \; \Pb\left(\widehat{\Ac} = \Ac\right) = 1$. 
    \item[(ii)] Asymptotic normality: if, in addition, the conditions~\ref{cond_wc_empir_process}-\ref{regularity_cond_wc} in Appendix \ref{technicalities} (applied to $\Fc_1$ instead of $\Fc$) are fulfilled and $\sqrt{n}\lambda_n^2=o(1)$, then
    \begin{equation*}
    \sqrt{n}\Big[\Hb_{\Ac\Ac}+\mathbf{B}_n(\theta_0)\Big]\Big\{\big(\widehat{\theta}-\theta_0\big)_{\Ac} + \big[\Hb_{\Ac\Ac}+\mathbf{B}_n(\theta_0)\big]^{-1}\mathbf{A}_n(\theta_0) \Big\} \overset{d}{\underset{n \rightarrow \infty}{\longrightarrow}} \WW,
    \end{equation*}
    where $\Hb_{\Ac\Ac} = \Big[ \Eb\big[  \partial^2_{\theta_k\theta_l}\ell(\theta_0;\UU)\big] \Big]_{k,l \in \Ac}$,
    $\normalfont\mathbf{A}_n(\theta_0) = \big[\partial_{2}\pp(\lambda_n,|\theta_{0,k}|)\text{sgn}(\theta_{0,k})\big]_{k \in \Ac}$,\\
    $\mathbf{B}_n(\theta) =\normalfont \text{diag}\big(\partial^2_{2,2}\pp(\lambda_n,|\theta_{0,k}|),\,k\in\Ac\big)$ and $\WW$ a $|\Ac|$-dimensional centered random vector defined as 
    {\small{\begin{equation*}
    W_j := (-1)^d\int_{(0,1)^d} \Cb(\uu) \, \partial_{\theta_j}\ell(\theta_0;d\uu)+
\sum_{ \substack{I \subset \{1,\ldots,d\} \\ I\neq \emptyset, I\neq \{1,\ldots,d\} } } (-1)^{|I|}
  \int_{(0,1)^{|I|}} \Cb(\uu_{I}:\1_{-I}) \, \partial_{\theta_j}\ell(\theta_0;d\uu_{I};\1_{-I}),
    \end{equation*}}}
    for $j \in \Ac$, with $\Cb(\uu):= \alpha_C(\uu) - \sum_{k=1}^d \dot C_k (\uu)\alpha_C(\1_{-k}:u_k)$ and 
    $\alpha_C$ \textcolor{black}{is the process defined in Assumption~\ref{cond_wc_empir_process}.} 
\end{itemize}
\end{theorem}
\textcolor{black}{The proof is relegated to} Section \ref{proof_results_sparsity} of the Appendix.
\textcolor{black}{The existence and the meaning of the integrals defining $W_j$  comes from Assumption~\ref{regularity_cond_wc}.
The proofs of the two latter theorems rely on a third-order limited expansion of our empirical loss w.r.t. its arguments.
The negligible terms are managed thanks to a ULLN (Corollary~\ref{cor_GC}).  
An integration by parts formula (Theorem~\ref{Th_fondam_multiv_rank_stat}) allows to write the main terms as sums of integrals of the empirical copula process w.r.t. some ``sufficiently regular'' functions. The latter ones are deduced from the derivatives of the loss, justifying the concept of 
$g_\omega$ regularity and Assumption~\ref{assumption_regularity}. 
A weak convergence result of the weighted empirical copula process concludes the asymptotic normality proof.}

\mds 

\textcolor{black}{Note that the previous results apply with dependent observations $(\XX_i)$. Indeed, in Theorem~\ref{oracle_property} (ii), we only require the weak convergence of the process $(\alpha_n)$ to $\alpha_C$.
In the i.i.d. case, $\alpha_C$ is a Brownian Bridge and $\WW$ is a Gaussian random vector. 
The latter assertion is still true when $(\XX_i)$ is a strongly stationary and geometrically alpha-mixing sequence, due to Proposition 4.4 in~\cite{berghaus2017weak}.}
The existence and the meaning of the random variable 
$\int_{(0,1)^{|I|}} \Cb(\uu_{I}:\1_{-I}) \, \partial_{\theta_j}\ell(\theta_0;d\uu_{I};\1_{-I})$ come from Assumption~\ref{regularity_cond_wc}.
Our conditions on $(a_n,\lambda_n)$ allow to use the SCAD or the MCP penalty, but not the LASSO because $a_n=\lambda_n$ in the latter case. \textcolor{black}{In other words, Theorem~\ref{bound_proba} may apply with LASSO but not Theorem~\ref{oracle_property}.}
The fact that LASSO does not yield the oracle property has already been noted in the literature: see \cite{zou2006adaptive} and the references therein.
Actually, consider any penalty such that $\pp(\lambda,t)$ does not depend on $t>0$, when $\lambda$ is sufficiently small, as for the SCAD and MCP cases. 
Then,  
$$\partial_2\pp(\lambda_n,|\theta_{0,j}|)=\partial^2_{2,2}\pp(\lambda_n,|\theta_{0,j}|)=0, \; \; j\in \Ac,$$ 
when $n$ is sufficiently large because $\lambda_n \rightarrow 0$, implying $a_n=b_n=0$. Thus, Assumption~\ref{assumption_regularizer} is satisfied
and $\mathbf{A}_n(\theta_0)=\mathbf{B}_n(\theta_0) = \mathbf{0}$, 
for $n$ large enough. Therefore, for such \textcolor{black}{penalty functions}, the asymptotic law of $\widehat\theta$ becomes a lot simpler.
\begin{cor}
\label{cor_AnBn_null}
Assume we have chosen the SCAD or the MCP penalty.
Then, \textcolor{black}{under the assumptions of Theorem~\ref{oracle_property} (i) and (ii)},
we have
\begin{equation*}
\sqrt{n}(\widehat{\theta}-\theta_0)_{\Ac} \overset{d}{\underset{n\rightarrow \infty}{\longrightarrow}} \Hb^{-1}_{\Ac\Ac} \mathbf{W},
\end{equation*}
where $\WW$ is defined in Theorem~\ref{oracle_property}.
\end{cor}
\textcolor{black}{Theorem \ref{oracle_property} establishes that the non-convex penalization procedure for semi-parametric copula models is an ``oracle'': for a consistent estimator $\widehat{\theta}$, the true sparse support is correctly identified in probability, 
and the non-zero estimated parameters have the same limiting law as if the support of $\theta_0$ were known. 
The limit $\WW$ is in the same vein as the one in Theorem 5 of \cite{radulovic2017weak}. However, in contrast to the latter result which assumes bounded indexing functions on $[0,1]^d$, our framework covers the case of unbounded ones. In particular, our indexing functions $\uu\mapsto \partial_{\theta_j}\ell(\theta_0;\uu)$ are not constrained to be bounded on $(0,1)^d$.}


\mds

\textcolor{black}{
\label{arguments_our_th}
In the particular case of CML, the asymptotic law of our estimator has been stated under a set of regularity assumptions (particularly Assumption~\ref{assumption_regularity}) that significantly differs from those that have been proposed in the literature: see the seminal papers~\cite{tsukahara2005} (Assumption (A.1)) and~\cite{chen2005pseudo} (Assumptions (A.2)-(A.4)). 
These competing sets of assumption are apparently not nested, due to two very different techniques of proofs: a general integration by parts formula in our case, and some old results from~\cite{ruymgaart1972,ruymgaart1974} in the latter cases.
Our set of assumptions should most often be weaker. Indeed, we typically do not require that some expectations as $\Eb\big[ \{U_1(1-U_1)U_2(1-U_2)\}^{-a} \big]$ are finite for some $a>0$ (Assumption (A.3) in~\cite{chen2005pseudo}). 
Moreover and importantly, our results may directly be applied to time series, contrary to the cited papers that are strongly restricted to i.i.d. observations. }

\mds 

\textcolor{black}{
Our proofs rely on the weak convergence of multivariate rank-order statistics: see Theorem~\ref{Th_fondam_multiv_rank_stat} in the Appendix, that is an extension of Theorem 3.3 of~\cite{berghaus2017weak} in an arbitrary dimension. 
Note that some papers have already stated similar results, but under restrictive conditions: Theorem 6 in \cite{fermanian2002weak} assumed the existence of continuous partial derivatives of the true copula on the whole hypercube $[0,1]^d$ (contrary to our Assumption~\ref{cond_reg_copula}). Theorem 2.4 in \cite{ghoudi2004empirical} relies on numerous very technical assumptions
and it is unclear whether the latter result is weaker/stronger than ours. 
Nonetheless, it ``only'' states weak convergence in the space of cadlag functions equipped with the Skorohod topology.}

\mds 

\textcolor{black}{Applying Theorem~\ref{oracle_property} (ii) or its corollary, it is possible to approximate by plug-in the limiting law of $\sqrt{n}(\widehat{\theta}-\theta_0)_{I}$, for every fixed subset $I\subset \widehat\Ac$ and assuming $\widehat\Ac= \Ac$ (an event whose probability tends to one): just replace the unknown quantities with their empirical counterparts. This is obvious concerning the matrices $\mathbf{A}_n(\theta_0)$, $\mathbf{B}_n(\theta_0)$ and $\Hb$. Concerning the Gaussian random vector $\WW$, an estimator of its variance is proposed in Section \ref{asymptoticvarianceZ} of the Appendix.}

\section{Conditional copula models}
\label{section_cond_copulas}

\textcolor{black}{In several situations of interest}, multivariate models are defined through some conditioning laws.
In other words, there exist a random vector of covariates $\ZZ \in \Zc$, for some borelian subset $\Zc$ in $\Rb^m$, and we focus on the laws of $\XX$ given any value of $\ZZ$.  
The natural framework is given by conditional copulas and the associated Sklar's theorem (\cite{fermanian2012time}, e.g.).
Generally speaking, conditional copula models have to manage conditional marginal distributions on one side, and conditional copulas on the other side. 
In our semiparametric approach, we do not specify the former ones. Indeed, this would be a source of complexities due to the need of kernel smoothing or
other non parametric techniques (\cite{fermanian2012time,abegaz2012semiparametric}, among others).
Here, we make the following simplifying assumption.
\begin{assumption}\label{no_dep_margins_covariates}
For every $k\in \{1,\ldots,d\}$, the law of $X_k$ given $\ZZ=\zz$ does not depend on $\zz\in \Zc$.
\end{assumption}
In other words, the conditional margins and the unconditional ones are identical.
Even if the latter assumption may be considered as relatively strong, it is not implausible. For instance, in typical copula-GARCH models (\cite{chen2006estimation}),
the marginal dynamics are filtered in a first stage, and a parametric copula is then postulated between the residuals.
It is well-known that systemic risk measures strongly depend on the economic cycle. Thus, some macroeconomic explanatory variables may have a significant impact on the latter copula. But, concerning the marginal conditional distributions,  this effect could be hidden due to the first-order phenomenon of ``volatility clustering''.  

\mds 

Under Assumption~\ref{no_dep_margins_covariates}, our dependence model of interest will be related to the laws of $\UU$ given $\ZZ=\zz$, $\zz\in \Zc$, by keeping the same definition of $\UU$ as previously. 
Sparsity would then be related to the number of components of $\zz$ that are relevant 
to specify any copula of $\XX$ (or $\UU$, equivalently) given $\ZZ=\zz$. 
Typically, the latter copulas belong to a given parametric $d$-dimensional family
$\{C_\theta; \theta \in \Theta\subset \Rb^p\}$ and the parameter $\theta$ depends on the underlying covariate: 
given $\ZZ=\zz$, the law of $\UU$ is $C_{\theta(\zz;\beta_0)}$, for some known map $\theta:\Zc \times \Bc\rightarrow \Theta$, $\Bc\subset \Rb^q$.
The problem is now to evaluate the true ``new parameter'' $\beta_0$, based on a sample $(\XX_i,\ZZ_i)_{i=1,\ldots,n}$.
Compared to the previous sections, the focus has switched from $(\theta_0,\Theta,p)$ to $(\beta_0,\Bc,q)$.
In particular, we will assume that the new parameter set $\Bc$ satisfies Assumption~\ref{assump_theta0} instead of $\Theta$.

\mds

Under Assumption~\ref{no_dep_margins_covariates}, we define the same pseudo-observations as before, and keep the notation $\widehat\Uc_n$.
In addition, set $\Zc_n:=(\ZZ_1,\ldots,\ZZ_n)$. For example, the parameter $\beta_0$ may naturally be estimated without any penalty by CML as
\begin{equation}
\widetilde \beta \,\textcolor{black}{\in}\, \arg\max_{\beta \in \Bc} \;\Lb_n(\beta;\widehat\Uc_n,\Zc_n),\; \Lb_n(\beta;\widehat\Uc_n,\Zc_n) :=
\sum_{i=1}^n \ln c_{\theta(\ZZ_i;\beta)} \big(\widehat{U}_{i,1},\ldots,\widehat{U}_{i,d}\big).
\label{estim_condit_cop}    
\end{equation}

Under sparsity and with penalties, the results of Sections~\ref{framework} and~\ref{asym_prop} can be adapted to tackle this new problem and even more general ones.
First of all, we need to distinguish the cases of known and/or unknown covariate distributions.

\subsection{The marginal laws of the covariates are known.}\label{cond_copula_marginal_known}

Let us assume the law of $Z_k$ is known, continuous, and denoted as $F_{Z_k}$, $k\in \{1,\ldots,m\}$.
To simplify and w.l.o.g., we can additionally impose that the joint law of $(\UU,\ZZ)$ is a copula.
\begin{assumption}
\label{unif_margins_covariates}
The law of $Z_k$ is uniform between $0$ and $1$, for every $k\in \{1,\ldots,m\}$.
\end{assumption}
If this is not the case, just replace any $\zz$ with $\tilde \zz:=\big(F_{Z_1}(z_1),\ldots,F_{Z_m}(z_m) \big)$.
In the case of conditional copula models, the map $\theta$ would be replaced by $\tilde \theta(\tilde \zz;\beta):=\theta \big( \zz ;\beta\big)$.
To ease the notations, we will not distinguish between $(\theta,Z_k)$ and $(\tilde \theta,\tilde Z_k)$. Extending~(\ref{obj_crit}), we now consider the penalized estimator
\begin{equation} \label{obj_crit_extended}
\widehat{\beta} \,\textcolor{black}{\in}\, \underset{\beta \in \Bc}{\arg \; \min} \; \Big\{ \Lb_n(\beta;\widehat{\Uc}_n,\Zc_n)
+ n \overset{q}{\underset{k=1}{\sum}}\pp(\lambda_n,|\beta_k|)\Big\},
\end{equation}
where $\pp(\lambda_n,.): \Rb \rightarrow \Rb_+$ is a \textcolor{black}{penalty}. Assume the new empirical loss
$\Lb_n(\beta;\uu_1,\ldots,\uu_n,\zz_1,\ldots,\zz_n)$ is associated with a continuous
function $\ell : \Bc \times (0,1)^{d+m} \rightarrow \Rb$ so that we can write
 \begin{equation}
 \Lb_n(\beta;\uu_1,\ldots,\uu_n,\zz_1,\ldots,\zz_n)
 := \overset{n}{\underset{i=1}{\sum}} \ell(\beta;\uu_i;\zz_i).
\label{loss_as_a_sum_extended}
 \end{equation}

Since the margins of $\ZZ$ are uniform by assumption, the joint law of $(\UU,\ZZ)$ is a $d+m$-dimensional copula denoted $D$.
Instead of the empirical copula related to the $\XX_i$, we now focus on an empirical counterpart of the $(\XX,\ZZ)$ copula, i.e.
\begin{equation}
\widehat D_n(\uu,\zz) := \frac{1}{n}\overset{n}{\underset{i=1}{\sum}}\mathbf{1}\big\{F_{n,1}(X_{i,1})\leq u_1,\ldots,F_{n,d}(X_{i,d})\leq u_d,Z_{i,1}\leq z_1,\ldots,Z_{i,m}\leq z_m\big\},
\label{empir_copula_extended}
\end{equation}
for every $\uu\in [0,1]^d$ and $\zz\in [0,1]^m$.
The associated ``empirical copula'' process becomes $\widehat \Db_n :=\sqrt{n}(\widehat D_n - D)$.
Obviously, the weak behavior of $\widehat\Db_n$ is the same as the one of $\Db_n:=\sqrt{n}(D_n-D)$, where
$$ D_n(\uu,\zz) := \frac{1}{n}\overset{n}{\underset{i=1}{\sum}}
\mathbf{1}\big\{X_{i,1}\leq F_{n,1}^-(u_1),\ldots,X_{i,d}\leq F_{n,d}^-(u_d),Z_{i,1}\leq z_1,\ldots,Z_{i,m}\leq z_m\big\}.$$
See Appendix C in~\cite{radulovic2017weak}, for instance.

\mds 

We would like to state some versions of Theorems~\ref{bound_proba} and~\ref{oracle_property} for an estimator given by~(\ref{obj_crit_extended}). 
Obviously, 
$$ \Lb_n(\beta;\widehat{\Uc}_n,\Zc_n) = n\int  \ell(\beta;\uu;\zz)\, \widehat D_n(d\uu,d\zz),  $$
and the limiting law of $\widehat \beta$ will be deduced from the asymptotic behavior of $\widehat D_n$.
Broadly speaking, the two main components to state such results are an integration by parts formula and a
weak convergence result for $\widehat\Db_n$ (or $\Db_n$, equivalently).
The former tool will be guaranteed by applying our Theorem~\ref{Th_fondam_multiv_rank_stat} in the appendix.
And the latter weak convergence result, will be a consequence of the weak convergence of $(\alpha_n)$, 
now the empirical process associated with the sample $(\UU_i,\ZZ_i)_{i=1,\ldots,n}$:
\begin{equation}
 \alpha_n(\uu,\zz):=\sqrt{n}\Big\{ \frac{1}{n}\sum_{i=1}^n \1 \big( \UU_i \leq \uu,\ZZ_i\leq \zz \big) - D(\uu,\zz)\Big\}.
 \label{new_def_alphan}
 \end{equation}
Obviously, when our observations are i.i.d.,
the process $(\alpha_n)$
weakly tends to a $D$-Brownian bridge $\alpha_D$ in $\ell^\infty([0,1]^{d+m})$.
Instead of $\bar\Cb_n$ (see Theorem~\ref{Th_fondam_multiv_rank_stat} in the appendix), the approximated empirical copula process is here
$$\bar \Db_n(\uu,\zz):= \alpha_n(\uu,\zz) -
\sum_{k=1}^d \dot D_k (\uu,\zz)\alpha_n(\1_{-k}:u_k), \;\;(\uu,\zz)\in [0,1]^{d+m},$$
using the notations detailed in our appendix.
Note that the partial derivatives of the copula $D$ have to be considered w.r.t. the first $d$ components only, i.e. the components that correspond to pseudo-observations (and not $Z_k$-type components).
\mds

Now, let us state the new theoretical results related to semi-parametric inference in the presence of pseudo-observations and possible complete observations. 
Since they can be deduced from the previous sections and proofs, we omit the details.
\begin{assumption}\label{regularity_assumption_extended}
The map $\beta\mapsto \ell(\beta;\uu,\zz)$ is thrice differentiable on $\Bc$, for every $(\uu,\zz) \in (0,1)^{d+m}$.
Moreover, $\Hb := \Eb[\nabla^2_{\beta\beta^\top}\ell(\beta_0;\UU,\ZZ)]$ and $\Mb := \Eb[\nabla_{\beta}\ell(\beta_0;\UU,\ZZ) \nabla_{\beta^\top}\ell(\beta_0;\UU,\ZZ)]$ exist and are positive definite.
Finally, for \textcolor{black}{every} $\eps>0$, there exists a constant $K_\epsilon$ such that
$$ \sup_{\{\beta; \|\beta - \beta_0\|<\eps\}} \;\sup_{j,l,m}\big|\Eb[\partial^3_{\beta_j\beta_l\beta_m} \ell( \beta;\UU,\ZZ)] \big| \leq K_\epsilon.$$
\end{assumption}
\begin{assumption}
\label{cond_reg_copula_extended}
For any $1 \leq j\leq d$ and $\epsilon>0$, the copula partial derivative $\dot D_j(\uu,\zz):=\partial D(\uu,\zz)/\partial u_j$ exists and is continuous on $V_j:=\{\uu\in [0,1]^d; u_j\in [\epsilon,1-\epsilon] \}$ uniformly w.r.t. $\zz\in [0,1]^m$. For every couple $(j_1,j_2)\in \{1,\ldots,d\}^2$ and $\zz\in [0,1]^m$, the second-order partial derivative $\ddot D_{j_1,j_2}(\uu,\zz):=\partial^2 D(\uu,\zz)/\partial u_{j_1}\partial u_{j_2}$ exists and is continuous
on $V_{j_1}\cap V_{j_2}$. Moreover, there exists a positive constant $K$ such that
\begin{equation}
 \sup_{\zz\in [0,1]^m}\big|\ddot D_{j_1,j_2}(\uu,\zz) \big|\leq K \min \left( \frac{1}{u_{j_1}(1-u_{j_1})},  \frac{1}{u_{j_2}(1-u_{j_2})} \right),\; \uu\in V_{j_1}\cap V_{j_2}.
\label{cond_2nd_der_cop_extended}
\end{equation}
\end{assumption}

\begin{assumption}
\textcolor{black}{For some positive constants $\omega$ and $K$,} the family of maps $\Fc=\Fc_1\cup\Fc_2\cup\Fc_3$ from $(0,1)^{d+m}$ to $\Rb$ is $g_{\omega,d+m}$-regular (see Definition~\ref{def_gomega_reg} in Appendix \ref{technicalities}), with
$$\Fc_1:=\{ f:(\uu,\zz)\mapsto \partial_{\beta_k}\ell(\beta_0;\uu;\zz); k=1,\ldots,p\},$$
$$\Fc_2:=\{ f:(\uu,\zz)\mapsto \partial^2_{\beta_k,\beta_l}\ell(\beta_0;\uu;\zz); k,l=1,\ldots,p\},$$
$$\Fc_3:=\{ f:(\uu,\zz)\mapsto \partial^3_{\beta_k,\beta_l,\beta_j}\ell(\beta;\uu;\zz); k,l,j=1,\ldots,p, \; \|\beta-\beta_0\|<K\}.$$
\label{assumption_regularity_extended}
\end{assumption}

\begin{theorem}\label{bound_proba_extended}
\textcolor{black}{ 
Suppose Assumptions \ref{oscillation_modulus_assump}-\ref{tail_empir_processes} given in Appendix \ref{technicalities} are satisfied with the process $(\alpha_n)$ defined on $[0,1]^{d+m}$ by~(\ref{new_def_alphan}).
Let some $$\omega \in \Big(0,\min\big\{\frac{\textcolor{black}{\kappa}_1}{2(1-\textcolor{black}{\kappa}_1)},\frac{\textcolor{black}{\kappa}_2}{2(1-\textcolor{black}{\kappa}_2)},\textcolor{black}{\kappa}_3 - \frac{1}{2}\big\}\Big).$$
Suppose Assumptions~\ref{assumption_regularizer}-\ref{assumption_regularity_extended} hold for this $\omega$.}
Then, there exists a sequence $\widehat{\beta}$ defined in (\ref{obj_crit_extended}) that satisfies the bound
\begin{equation*}
\|\widehat{\beta}-\beta_0\|_2 = O_p\Big(\ln(\ln n) n^{-1/2} + a_{n}\Big).
\end{equation*}
\end{theorem}

\begin{theorem}\label{oracle_property_extended}
In addition to the assumptions of Theorem~\ref{bound_proba_extended},
assume that the \textcolor{black}{penalty function} satisfies $\underset{n \rightarrow \infty}{\lim \, \inf} \; \underset{x \rightarrow 0^+}{\lim \, \inf} \; \lambda^{-1}_n \partial_2\pp(\lambda_n,x) > 0$. Moreover, assume $\lambda_n \rightarrow 0$, $\sqrt{n} \lambda_n (\ln(\ln n))^{-1} \rightarrow \infty$ and $a_n=o(\lambda_n)$.
Then, the consistent estimator $\widehat{\beta}$ of Theorem~\ref{bound_proba_extended} satisfies the following properties.
\begin{itemize}
    \item[(i)] Support recovery: $\underset{n \rightarrow \infty}{\lim} \; \Pb(\widehat{\Ac} = \Ac) = 1$, where $\Ac$ and $\widehat{\Ac}$ are related to the support of $\beta_0$.
    \item[(ii)] Asymptotic normality:
    assume the empirical process $(\alpha_n)$ converges weakly in $\ell^\infty([0,1]^{d+m})$
    to a limit process $\alpha_D$ which has continuous sample paths, almost surely.
    If, in addition, the Assumption \ref{regularity_cond_wc} in Appendix \ref{technicalities} (applied to $\Fc_1$ instead of $\Fc$) is fulfilled and $\sqrt{n}\lambda_n^2=o(1)$, then
    \begin{equation*}
    \sqrt{n}\Big[\Hb_{\Ac\Ac}+\mathbf{B}_n(\beta_0)\Big]\Big\{\big(\widehat{\beta}-\beta_0\big)_{\Ac} + \big[\Hb_{\Ac\Ac}+\mathbf{B}_n(\beta_0)\big]^{-1}\mathbf{A}_n(\beta_0) \Big\} \overset{d}{\underset{n \rightarrow \infty}{\longrightarrow}} \WW,
    \end{equation*}
    where $\Hb_{\Ac\Ac} = \Big[ \Eb\big[  \partial^2_{\beta_k\beta_l}\ell(\beta_0;\UU;\ZZ)\big] \Big]_{k,l \in \Ac}$,
    $\normalfont\mathbf{A}_n(\beta_0) = \big[\partial_{2}\pp(\lambda_n,|\beta_{0,k}|)\text{sgn}(\beta_{0,k})\big]_{k \in \Ac}$, $\mathbf{B}_n(\beta) =\normalfont \text{diag}(\partial^2_{2,2}\pp(\lambda_n,|\beta_{0,k}|),\,k\in\Ac)$ and $\WW$ is a $|\Ac|$-dimensional centered random vector defined as
    \begin{eqnarray*}
    \lefteqn{
    W_j := (-1)^{d+m}\int_{(0,1)^{d+m}} \Db(\uu,\zz) \, \partial_{\beta_j}\ell(\beta_0;d\uu,d\zz)   }\\
    &+&
\sum_{ \substack{I \subset \{1,\ldots,d+m\} \\ I\neq \emptyset, I\neq \{1,\ldots,d+m\} } } (-1)^{|I|}
  \int_{(0,1)^{|I|}} \Db\big((\uu,\zz)_{I}:\1_{-I}\big) \, \partial_{\beta_j}\ell(\beta_0;d(\uu,\zz)_{I};\1_{-I}),
    \end{eqnarray*}
    for $j \in \Ac$, with $\Db(\uu,\zz)= \alpha_D(\uu,\zz) - \sum_{k=1}^d \dot D_k (\uu,\zz)\alpha_D(\1_{-k}:u_k)$, $(\uu,\zz)\in [0,1]^{d}\times \Zc$.
\end{itemize}
\end{theorem}

\begin{remark}
In Theorems~\ref{bound_proba_extended} and~\ref{oracle_property_extended}, it has been assumed that some $(\uu,\zz)$-maps are regular w.r.t. $g_{\omega,d+m}$ (Assumption~\ref{assumption_regularity_extended}). Checking the latter property with $g_{\omega,d+m}$ may sometimes be painful, or even impossible.
Beside, this task may have been done for the same parametric copula family in the (usual) unconditional case, using the weights 
$g_{\omega,d}:\uu\mapsto g_{\omega,d}(\uu)$.
Unfortunately, there is no order between $g_{\omega,d+m}(\uu,\zz)$ and $g_{\omega,d}(\uu)$ and the randomness of the covariates matters in the more general situation~(\ref{obj_crit_extended}).
Hopefully, when some regularity properties are available uniformly w.r.t. $\zz\in \Zc$, we can rely on $g_{\omega,d}$ instead of $g_{\omega,d+m}$ and checking regularity properties becomes simpler: see Section~\ref{appl_cond_cop_models} below.
\end{remark}

By a careful inspection of the proof of Proposition 3.1 in~\cite{segers2012asymptotics}, 
the weak convergence of $\Db_n$ can be easily stated under ``minimal assumptions'' in the i.i.d. case.
Since this result is new and of interest per se, it is now precisely stated.
\begin{theorem}
Assume the margins $F_1,\ldots,F_d$ are continuous. 
If, for any $j\in \{1,\ldots,d\}$, any $\zz\in \Zc$ and any $\epsilon>0$, the partial derivative $\dot D_j(\uu,\zz):=\partial D(\uu,\zz)/\partial u_j$ exists and is continuous on
$V_{j,\epsilon}:=\{\uu\in [0,1]^d; u_j\in [\epsilon,1-\epsilon] \}$ uniformly w.r.t. $\zz\in \Zc$, then 
$$\sup_{\uu\in [0,1]^d,\zz\in \Zc} \big| (\Db_n-\bar \Db_n)(\uu,\zz)\big|=o_P(1),$$
when $n\rightarrow \infty$. Moreover, $\Db_n$ weakly tends to $\Db$ in $\ell^\infty([0,1]^{d}\times \Zc)$.
\label{weak_conv_cop_process_extended}
\end{theorem}
Note that Theorem~\ref{weak_conv_cop_process_extended} does not require Assumption~\ref{unif_margins_covariates}, i.e. 
it applies with an arbitrary (possibly discrete) subset set $\Zc$, and even if the marginal laws of the covariates are discontinuous.

\subsection{The marginal laws of the covariates are unknown.}
In this case, the laws $F_{Z_k}$ are still continuous but unknown, and the covariates belong to some arbitrary subset $\Zc\in \Rb^m$.
Introduce the random variables $V_k:=F_{Z_k}(Z_k)$, $k\in \{1,\ldots,m\}$, that are uniformly distributed between zero and one.
Set $\VV:= (V_{1},\ldots,V_{m})$.
We can manage this situation when the loss function is a map of $(\UU,\VV)$, instead of $(\UU,\ZZ)$ as previously:
define pseudo-observations related to the covariates
 $\widehat{\VV}_i:= (\widehat{V}_{i,1},\ldots,\widehat{V}_{i,m})$, where $\widehat{V}_{i,k} := F_{n,Z_k}(Z_{i,k})$
for every $i\in\{1,\ldots,n\}$ and every $k\in\{1,\ldots,m\}$, using the $k$-th re-scaled empirical c.d.f.
$ F_{n,Z_k}(s):= (n+1)^{-1} \sum^n_{i=1} \mathbf{1}\{Z_{i,k} \leq s\}.$
The penalized estimator of interest is here defined as
\begin{equation*}
\label{obj_crit_extended_bis}
\overline{\beta} \,\textcolor{black}{\in}\, \underset{\beta \in \Bc}{\arg \; \min} \; \Big\{ \overset{n}{\underset{i=1}{\sum}} \ell(\beta;\widehat{\UU}_i;\widehat{\VV}_i)
+ n \overset{q}{\underset{k=1}{\sum}}\pp(\lambda_n,|\beta_k|)\Big\}.
\end{equation*}

Thus, we recover the standard situation that has been studied in Section~\ref{asym_prop}. 
All the results of Section~\ref{asym_prop} directly apply, replacing the $d$-dimensional copula $C$ by the $d+m$-dimensional copula $D$, replacing $\widehat{\Uc}_n$ by $\big( \widehat{\UU}_i,\widehat{\VV}_i\big)_{i=1,\ldots,n}$, etc. The limiting law of
$\big(\overline{\beta}-\beta_0\big)_{\Ac}$ will not be the same as in Theorem~\ref{oracle_property_extended} (ii). Indeed, the process $\Db$ has now to be replaced
by $\tilde\Db(\uu,\zz):= \Db(\uu,\zz) - \sum_{k=d+1}^{d+m} \dot D_k (\uu,\zz)\alpha_D(\1_{-k}:z_k)$, due to the additional amount of randomness induced by the ``pseudo-covariates'' $\widehat \VV_{i}$. 

\subsection{Practical considerations}
\label{appl_cond_cop_models}

Now, let us come back to the estimator given by~(\ref{estim_condit_cop}).
The regularity assumptions of 
Theorems~\ref{bound_proba_extended} and~\ref{oracle_property_extended} have to be checked on a case-by-case basis. 
Nonetheless, there are some situations where things become simpler. 
Indeed, assume 
\begin{enumerate}
    \item[(i)] the map $(\zz,\beta)\mapsto \theta(\zz;\beta)$ and all its partial derivatives (up to order $3$ and $m$ w.r.t. the 
    components of $\beta$ and $\zz$ respectively) are
    bounded in $\Vc(\beta_0) \times \Zc$, denoting by $\Vc(\beta_0)$ a neighborhood of the true parameter $\beta_0$ in the space $\Bc$;
    \item[(ii)] the conditions of Theorems~\ref{bound_proba} and~\ref{oracle_property} are satisfied for the CML loss function $\ell(\theta;\uu):=-\ln c_\theta (\uu)$.
    \item[(iii)] the latter conditions may be verified replacing the weight function $g_{\omega,d}(\uu)$ by $\min_j(u_j)$.
\end{enumerate}
Note that, under (i) and (ii), the conditions of Theorems~\ref{bound_proba} and~\ref{oracle_property} are satisfied with the derivatives of the loss functions $(\uu,\beta) \mapsto \ln c_{\theta(\zz;\beta)} (\uu )$, for any fixed $\zz\in [0,1]^m$.
Then, it can be easily checked that Theorems~\ref{bound_proba_extended} and~\ref{oracle_property_extended} apply. 
In particular, the influence of the covariates is ``neutralized'' through (i); moreover, noting that $g_{\omega,d+m}(\uu,\zz)\leq \min_j(u_j)$, (iii) is sufficient to manage the weight functions. 

\mds 

To illustrate, consider the case of Gumbel and Clayton copulas, for which $\beta$ is $(1,+\infty)$ and $\Rb_+^*$ respectively 
(under the usual parameterization of~\cite{nelsen2006introduction}). 
Assume that, given $\ZZ=\zz$, the latter parameters can be rewritten $\theta(\zz;\beta)$ for some $\beta$ in $\Bc$ that satisfies Assumption~\ref{assump_theta0}.
Moreover, assume the ranges of the maps $(\zz,\beta)\mapsto \theta(\zz;\beta)$ from $[0,1]^m \times \Bc$ to $\Theta$
are included into a compact subset of $\Rb$. Then, Theorems~\ref{bound_proba_extended} and~\ref{oracle_property_extended} apply: the associated parameters $\tilde\beta$ defined in~\ref{estim_condit_cop} are consistent and weakly convergent.
See Sections \ref{verif_regul_Gumbel_cop} and \ref{verif_regul_Clayton_cop} of the Appendix for the technical details and for the proof that (i), (ii) and (iii) are indeed satisfied. This justifies the application of our results in the case of single-index models with Clayton/Gumbel copulas (Section~\ref{simulations} below).

\section{Applications}\label{applications}

\subsection{Examples}

\subsubsection{M-criterion for Gaussian copulas}
\label{Mcriterion_Gaussian_cop}
An important application of the latter results is Maximum Likelihood Estimation with pseudo-observations, where we observe a sample $\mathbf{X}_1,\ldots,\mathbf{X}_n$ from a $d$-dimensional distribution whose parametric copula depends on some parameter $\theta \in \Rb^p$. Equipped with pseudo-observations and using the same notations as above, our penalized estimator is defined as
\begin{equation*}
\widehat{\theta} \,\textcolor{black}{\in}\, \underset{\theta \in \Theta}{\arg \; \min} \; \Big\{ -\overset{n}{\underset{i=1}{\sum}} \ln c\big(\widehat{\UU}_{i};\theta\big) + n \overset{p}{\underset{k=1}{\sum}} \pp(\lambda_n,|\theta_k|)\Big\},
\end{equation*}
denoting by $c(\cdot,\theta)$ the copula density.
In the case of Gaussian copula model, the parameter of interest is $\theta = \text{vech}(\Sigma) \in \Rb^{d(d-1)/2}$, where $\Sigma$ is the correlation matrix of such a copula. This yields
\begin{equation}
\widehat{\theta} \,\textcolor{black}{\in}\, \underset{\theta \in \Theta}{\arg \; \min} \; \Big\{ \overset{n}{\underset{i=1}{\sum}} \ell(\theta;\widehat{\UU}_{i}) + n \overset{p}{\underset{k=1}{\sum}} \pp(\lambda_n,|\theta_k|)\Big\}, \; \ell(\theta;\widehat{\UU}_{i}) = \frac{1}{2}\ln(|\Sigma|) + \frac{1}{2}\mathbf{Z}^\top_{ni} \Sigma^{-1}\mathbf{Z}_{ni},
\label{estim_theta_hat_CG}
\end{equation}
with $\mathbf{Z}_{ni} = \big(\Phi^{-1}(\widehat{U}_{i1}),\ldots,\Phi^{-1}(\widehat{U}_{id})\big)^\top$, $i\in\{1,\ldots,n\}$.
Note that the $\mathbf{Z}_{ni}$ are approximated realizations of the Gaussian random vector 
$\mathbf{Z}: = (\Phi^{-1}(U_{1}),\ldots,\Phi^{-1}(U_{d}))^\top$.
\textcolor{black}{The Gaussian copula exhibits discontinuous partial derivatives at the boundary of $[0,1]^d$: see ~\cite{segers2012asymptotics}, Example 5.1.}
We have seen in Section~\ref{asym_prop} that $\widehat{\theta}$ is asymptotically normally distributed
under suitable regularity conditions. 
In Section \ref{verif_regul_Gaussian_cop} of the Appendix, we check that all the conditions are satisfied so that Theorems~\ref{bound_proba} and~\ref{oracle_property} can be applied to Gaussian copula models and $\widehat{\theta}$ given by~(\ref{estim_theta_hat_CG}). Interestingly, the associated limiting law in Theorem~\ref{oracle_property} is simply 
$ \WW := (-1)^d\int_{(0,1)^d} \Cb(\uu) \, \nabla_{\theta}\ell(\theta_0;d\uu)$.

\mds 

\textcolor{black}{The estimation of $\Sigma$ can be carried out using the least squares loss $\ell_{\text{LS}}(\theta;\widehat{\UU}_i) := \text{tr}((\mathbf{Z}_{ni}\mathbf{Z}_{ni}^\top - \Sigma)^2)$. In Section \ref{verif_regul_Gaussian_cop} of the Appendix, we verify that the latter loss satisfies the regularity conditions of Theorems~\ref{bound_proba} and~\ref{oracle_property}. Our simulation experiments on sparse Gaussian copula will be based on both the Gaussian loss and the least squares loss. Set $\widehat{S} := n^{-1}\sum^n_{i=1}\mathbf{Z}_{ni}\mathbf{Z}_{ni}^\top$, that approximates $\Sigma$. 
Interestingly, our empirical loss is equal to $n\ln(|\Sigma|)/2+ n\text{tr}(\Sigma^{-1}\widehat{S})/2$ and $n\;\text{tr}((\widehat{S} - \Sigma)^2)$ for the Gaussian CML and least squares cases respectively, apart from some constant terms that do not depend on $\Sigma$. Indeed, for the least squares loss, we have
$$ n\;\text{tr}((\widehat{S} - \Sigma)^2) 
= n \; \text{tr}(\widehat{S}^\top \widehat{S})-\text{tr}(\sum_{i=1}^n\mathbf{Z}_{ni}\mathbf{Z}_{ni}^\top \Sigma-\Sigma^\top \sum_{i=1}^n\mathbf{Z}_{ni}\mathbf{Z}_{ni}^\top)+n\;\text{tr}(\Sigma^\top\Sigma),$$
which is equal to $\sum^n_{i=1}\text{tr}((\mathbf{Z}_{ni}\mathbf{Z}_{ni}^\top - \Sigma)^2)$ plus 
some constant terms that do not depend on $\Sigma$.
In our simulation experiment for sparse Gaussian copulas, the implemented code relies on 
$\ln(|\Sigma|)+\text{tr}(\Sigma^{-1}\widehat{S})$ and/or $\text{tr}((\widehat{S} - \Sigma)^2)$ intensively, 
some quantities that can be quickly calculated through some matrix manipulations, even when $n>>1$.
}

\subsubsection{M-criterion for mixtures of copulas}

Mixing parametric copulas is a flexible way to build richly parameterized copulas. More precisely, a mixture based copula $C$ is usually specified by its density $c(\uu) = \sum^q_{k=1}\pi_k c_k(\uu;\gamma_k)$, built from the family of copula densities  $\{c_k(\uu;\gamma_k), k=1,\ldots,q\}$. Each of the copula density $c_k(\uu;\cdot)$ depends on a vector of parameters $\gamma_k\in \Theta_k$, and $(\pi_k)_{k=1,\ldots,q}$ are some unknown weights satisfying $\pi_k\in [0,1]$, $k\in\{1,\ldots,q\}$, and
$\sum^q_{k=1}\pi_k = 1$. The parameter of interest is $\theta = \big(\pi_1,\ldots,\pi_{q-1},\gamma^\top_1,\ldots,\gamma^\top_q\big)^\top \in \Theta$, with \textcolor{black}{$\Theta:=\Theta_{\text{mixt},q-1}\times \Theta_1\times \cdots \Theta_q$, 
with the notations of Example~\ref{ex_copula_mixture}}. Let $p$ be the dimension of any parameter $\theta$.  
Then, with our CML criterion with pseudo-observations, an estimator of the true $\theta_0$ is defined as
\begin{equation*}
\widehat{\theta}_{\text{mixt}} \,\textcolor{black}{\in} \,\underset{\theta \in \Theta}{\arg \; \min} \; \Big\{ \overset{n}{\underset{i=1}{\sum}} \ell(\theta;\widehat{\UU}_{i}) + n \overset{p}{\underset{k=1}{\sum}} \pp(\lambda_n,|\theta_k|)\Big\}, \; \ell(\theta;\widehat{\UU}_{i}) = -\ln\big(\overset{q}{\underset{k=1}{\sum}}\pi_kc_k(\widehat{U}_{i1},\ldots,\widehat{U}_{id};\gamma_{k})\big).
\end{equation*}
Such a procedure fosters sparsity among $(\pi_k)_{k=1,\ldots,q}$ and among $\gamma_1,\ldots,\gamma_q$: when $\widehat{\pi}_k \neq 0$, then the corresponding copula parameter $\widehat{\gamma}_k$ can potentially be sparse. The latter criterion is similar to Criterion (3) in~\cite{cai2014mixture}; however, these authors treat the marginals as known quantities, which significantly simplifies their large sample analysis.

\mds

\textcolor{black}{Assume that all parametric copula families $\{C_k(\uu;\gamma_k), \gamma_k\in \Theta_k\}$, $k\in \{1,\ldots,q\}$, satisfy the regularity 
conditions to apply our Theorems~\ref{bound_proba} and~\ref{oracle_property}. Unfortunately, in general, this does not imply their mixture model will satisfy all these properties, in particular the $g_\omega$-regularity. Therefore, it will be necessary to do this task on a case-by-case basis.}
\textcolor{black}{
Nonetheless, in the particular case of mixtures of Gaussian copulas, our regularity conditions are satisfied by considering the least squares loss function, as in Section~\ref{Mcriterion_Gaussian_cop}. 
Indeed, for this model, the variance-covariance matrix of $\ZZ$ is 
$\Sigma:=\sum_{k=1}^q \pi_k \Sigma_k$, denoting by $\Sigma_k$ the correlation matrix of parameters that is associated with $c_k$, $k\in \{1,\ldots,q\}$. Thus, the same arguments as for the Gaussian copula with the $\ell_{LS}$ loss can easily be invoked. Alternatively, choosing the log-likelihood (CML) loss induces more difficulties. Nonetheless, it can be proved that our regularity conditions apply, at least when all the (true) weights are strictly positive.
See Section \ref{verif_regul_Gaussian_cop} of the Appendix for details.
}

\subsection{Simulated experiments}\label{simulations}

\textcolor{black}{In this section, we assess the finite sample performances of our penalization procedure in the presence of pseudo-observations. To do so, we carry out simulated experiments for the sparse Gaussian copula model and some sparse conditional copulas. These experiments are meant to illustrate the ability of the penalization procedure to correctly identify the zero entries of the copula parameter with non-parametric marginals. First, let us briefly discuss the implementation procedure and the choice of $\lambda_n$.}

\subsubsection{Implementation and selection of $\lambda_n$}\label{cv_procedure}

\textcolor{black}{All the experiments are implemented in Matlab code and run on a Mac-OS Apple M1 Ultra with 20 cores and 128 GB Memory. A gradient descent type algorithm is implemented to solve the penalized Gaussian copula problem, a situation where closed-form gradient formulas can directly be applied. We employed the numerical optimization routine \emph{fmincon} of Matlab to find the estimated parameter for sparse conditional copulas \footnote{\textcolor{black}{The code for replication is available at \url{https://github.com/Benjamin-Poignard/sparse-copula}}}. The tuning parameter $\lambda_n$ controls the model complexity and must be calibrated for each penalty function. To do so, we employ a $5$-fold cross-validation procedure, in the same spirit as in Section 7.10 of \cite{Hastie2009}. To be specific, we divide the data into $5$ disjoint subgroups of roughly the same size, the so-called folds. Denote the indices of that observations that belong to the $k$-th fold by $T_k$, $k\in \{1,\ldots,5 \}$ and the size of the $k$-th fold by $n_k$. The $5$-fold cross-validation score is defined as
\begin{equation}\label{CV_score}
\text{CV}(\lambda_n) := \overset{5}{\underset{k=1}{\sum}}\Big\{\underset{i \in T_k}{\sum}\ell(\widehat{\theta}_{-k}(\lambda_n);\widehat{U}_i)\Big\},
\end{equation}
where $\sum_{i \in T_k}\ell(\widehat{\theta}_{-k}(\lambda_n);\widehat{U}_i)$ is the non-penalized loss associated to the copula model and evaluated over the $k$-th fold $T_k$ of size $n_k$, which serves as the test set, and $\widehat{\theta}_{-k}(\lambda_n)$ is our penalized estimator of the latter copula parameter based on the sample $(\cup^5_{j=1}T_j)\setminus T_k$ - the training set - using $\lambda_n$ as the tuning parameter. The optimal tuning parameter $\lambda^\ast_n$ is then selected according to $\lambda^\ast_n \,\textcolor{black}{\in}\, \arg \; \min_{\lambda_n}\; \text{CV}(\lambda_n)$. Then, $\lambda^\ast_n$ is used to obtain the final estimate of $\theta_0$ over the whole sample. Here, the minimization of the cross-validation score is performed over $\{c\sqrt{\log(\text{dim})/n}\}$, where $c$ is a grid (its size is user-specified) of $91$ values set as $0.01,0.05,0.1,0.15,\ldots,4.5$ and $\text{dim}$ the number of parameters to estimate. The choice of the rate $\sqrt{\log(\text{dim})/n}$ is standard in the sparse literature for M-estimators: see, e.g., Chapter 6 of \cite{Buhlmann2011} for the LASSO and \cite{loh2017} for non-convex penalization methods. }

\subsubsection{Sparse Gaussian copula models}\label{gaussian_empiric}

\textcolor{black}{Our first application concerns the Gaussian copula. Here, sparsity is specified with respect to the variance-covariance matrix $\Sigma \in \Rb^{d\times d}$. Its diagonal elements are equal to one and its off-diagonal elements are $\theta_{kl} \in (-1,1), 1 \leq k,l\leq d, k \neq l$, so that the number of distinct free parameters is $d(d-1)/2$. The parameter $\theta$ is defined as the column vector of the $\Sigma$ components located strictly below the main diagonal. Thus, $\Sigma$ can be considered as a function of $\theta$: $\Sigma=\Sigma(\theta)$. We still denote by $\Ac = \{k: \theta_{0,k} \neq 0, k=1,\ldots,d(d-1)/2\}$ the true sparse support, where $\theta_0$ is sparse when some components of $\UU$ are independent. Our simulated experiment can be summarized as follows: we simulate a sparse true $\theta_0$, generate the $\UU_i$ from the corresponding Gaussian copula with parameter $\theta_0$ for a given sample size $n$, and calculate $\widehat{\theta}$ by minimizing our penalized procedure based on the pseudo-sample; this procedure is repeated for two hundred independent batches.} 

\mds

\textcolor{black}{To be more specific, a sparse $\theta_0$ is randomly drawn for each batch as detailed below. 
Then, we generate a sample of $n$ vectors $\UU_i$ as follows: we draw $\UU_i=(U_{i,1},\ldots,U_{i,d})$, $i\in\{ 1,\ldots,n\}$, from a Gaussian copula with parameter $\theta_0$; then we consider their rank-based transformation to obtain a non-parametric estimator of their marginal distribution, providing the pseudo-observations $\widehat{\UU}_i = (\widehat{U}_{i,1},\ldots,\widehat{U}_{i,d})$ that enter the loss function.
Then, we solve~(\ref{obj_crit}).
The non-penalized loss is the Gaussian log-likelihood, as defined in~(\ref{estim_theta_hat_CG}).
Alternatively, \textcolor{black}{in~(\ref{obj_crit})}, we consider the least squares criterion for which 
$\ell(\theta;\widehat{U}_i)=\text{tr}((\ZZ_{ni}\ZZ^\top_{ni}-\Sigma)^2)$, where $\ZZ_{ni} := (\Phi^{-1}(\widehat{U}_{i,1}),\ldots,\Phi^{-1}(\widehat{U}_{i,d}))^\top$. \textcolor{black}{In both cases, the penalized problem is solved by a gradient descent algorithm based on the updating formulas of Section 4.2 in~\cite{loh2015}, where the initial value is set as $\widehat{S} := n^{-1}\sum^n_{i=1}\mathbf{Z}_{ni}\mathbf{Z}_{ni}^\top$}. The score for cross-validation purpose is defined in (\ref{CV_score}) with the Gaussian loss or the least squares loss. 
Concerning $\widehat{\theta}$, we apply the SCAD, MCP and LASSO penalties. The non-convex SCAD and MCP ones require the calibration of $a_{\text{scad}}$ and $b_{\text{mcp}}$, respectively. We select $a_{\text{scad}}=3.7$, a ``reference'' value identified as optimal in \cite{fan2001variable} by cross-validated experiments. In the MCP case, the ``reference'' parameter is set as $b_{\text{mcp}}=3.5$, following \cite{loh2015}. 
We investigate the sensitivity of these non-convex procedures with respect to their parameters $a_{\text{scad}}, b_{\text{mcp}}$. 
In particular, our results are also detailed with the values $a_{\text{scad}}=40$ and $b_{\text{mcp}}=40$. 
This case corresponds to ``large'' $a_{\text{scad}}$ and $b_{\text{mcp}}$, for which the corresponding \textcolor{black}{penalty functions} tend to the LASSO \textcolor{black}{penalty}.
}

\mds

\textcolor{black}{We consider the dimensions $d\in\{10, 20\}$, so that the dimension of $\theta_0$ is $p=d(d-1)/2 = 45$ and $190$, respectively. The cardinality of the true support $\Ac$ is set arbitrarily as $|\Ac|=7$ (resp. $|\Ac| = 19$) when $d=10$ (resp. $d=20$), so that the percentage of zero coefficients of $\theta_0$ is approximately $85\%$ (resp. $90\%$). As for the non-zero coefficients of $\theta_0$, for each batch, they are generated from the uniform distribution $\Uc([-0.7,-0.05]\cup[0.05,0.7])$, thus ensuring the minimum signal strength $\min_{k\in \Ac}|\theta_{0,k}|\geq 0.05$. As for the sample size, we consider $n\in\{500,1000\}$. Note that, for each batch, the number of zero coefficients of $\theta_0$ remains unchanged but their locations may be different.}

\mds

We report the variable selection performance through the percentage of zero coefficients in $\theta_0$ that are correctly estimated, denoted by $\text{C1}$, and the percentage of non-zero coefficients in $\theta_0$ correctly identified as such, denoted by $\text{C2}$. The mean squared error (MSE), defined as $\|\widehat\theta-\theta_0\|^2_2$, is reported as an estimation accuracy measure. These metrics are averaged over the two hundred batches and reported in Table~\ref{support_gaussian_copula}. For clarifying the reading of the figures, the first entry \textcolor{black}{$84.70$} in the column ``Gaussian'' represents the percentage of the true zero coefficients correctly identified by the estimator deduced from the Gaussian loss function, with SCAD penalization when $a_{\text{scad}}=3.7$, for a sample size $n=500$, a dimension $d=10$, and averaged over two hundred batches; in the last MSE line, the value \textcolor{black}{$0.0625$} in the column ``Least Squares'' represents the MSE of the estimator deduced from the least squares loss function with MCP penalization when $b_{\text{mcp}}=40$, for a sample size $n=1000$ and $d=20$, \textcolor{black}{and averaged over two hundred batches}. Our results highlight that, for a given loss, the SCAD/MCP-based penalization procedures provide better results in terms of support recovery compared to the LASSO for our reference values of $a_{\text{scad}}, b_{\text{mcp}}$.
Furthermore, the SCAD and even more the MCP-based estimators with the Gaussian loss provide better recovery performances compared to the least squares loss. This is particularly true with the indicator $\text{C1}$, i.e., for the sake of identifying the zero coefficients. Interestingly, large $a_{\text{scad}},b_{\text{mcp}}$ values worsen the recovery ability. Indeed, such large values result in a LASSO-type behavior, which is biased so that small $\lambda_n$ will tend to be selected. 
Moreover, for any given penalty function, the Gaussian loss-based MSE's are always lower than the least squares loss-based MSE's, which suggest that the estimator deduced from the former loss is more efficient than the estimator obtained from the latter loss. Furthermore, larger $a_{\text{scad}},b_{\text{mcp}}$ values worsen the MSE performances: when $a_{\text{scad}}=b_{\text{mcp}}=40$, for a given loss function, the MSE's of SCAD/MCP are close to the LASSO. In Section \ref{additional_experiment_gaussian_cop} of the Appendix, we investigate further the sensitivity of the performances of the SCAD and MCP-based estimators with respect to $a_{\text{scad}}$ and $b_{\text{mcp}}$. 
    
\begin{table}[h!]\centering\caption{Model selection and accuracy, based on 200 replications. For each penalized loss, the results are reported according to the order SCAD, MCP and then LASSO. $\text{C1},\text{C2}$ are expressed in percentage, and larger numbers are better; for each MSE metric, smaller
numbers are better.\label{support_gaussian_copula}}
{\color{black}\scalebox{0.95}{\begin{tabular}{c c c c c}\hline \hline $(n,d,a_{\text{scad}},b_{\text{mcp}})$ &  & Truth & Gaussian & Least Squares \\
\hline\hline
& & & & \\

$(500,10,3.7,3.5)$ & $\text{C1}$ & $100$ &  $ 84.70 \,-\, 89.97 \,-\, 78.36 $ & $ 80.22 \,-\, 86.26 \,-\, 73.76 $ \\
 & $\text{C2}$ & $100$ & $ 96.14 \,-\, 94.64 \,-\, 97.07 $  & $ 95.57 \,-\, 94.43 \,-\, 96.07 $\\
 & MSE & & $ 0.0190  \,-\, 0.0189  \,-\, 0.0238 $  &  $ 0.0293 \,-\, 0.0281 \,-\, 0.0421 $ \\

& & & & \\

$(1000,10,3.7,3.5)$ & $\text{C1}$ & $100$ &  $ 86.55 \,-\, 91.29 \,-\, 77.53 $ &  $ 80.37 \,-\, 86.90 \,-\, 69.17 $ \\
 & $\text{C2}$ & $100$ & $ 97.93 \,-\,  97.50  \,-\, 98.64 $ &  $ 98.14 \,-\, 97.64 \,-\, 98.71 $  \\
 & MSE & & $ 0.0081 \,-\, 0.0079 \,-\, 0.0117 $ & $ 0.0131 \,-\, 0.0124 \,-\, 0.0213 $  \\

& & & & \\
    
$(500,10,40,40)$ & $\text{C1}$ & $100$ & $ 79.54 \,-\, 80.01 \,-\, 78.36 $ &  $ 74.30 \,-\, 75.34 \,-\, 73.76 $ \\
 & $\text{C2}$ & $100$ & $ 97.00 \,-\, 96.79 \,-\, 97.07 $  &  $ 96.00 \,-\, 95.93 \,-\, 96.07 $ \\
 & MSE & & $ 0.0228 \,-\, 0.0227 \,-\, 0.0238 $ & $ 0.0405 \,-\, 0.0403 \,-\, 0.0421 $ \\

& & & & \\

$(1000,10,40,40)$ & $\text{C1}$ & $100$ & $ 78.97  \,-\, 79.86 \,-\, 77.53 $ & $70.49 \,-\, 71.54 \,-\, 69.17 $  \\
 & $\text{C2}$ & $100$ & $ 98.64 \,-\, 98.57 \,-\, 98.64 $ & $ 98.64 \,-\, 98.57 \,-\, 98.71 $  \\
 & MSE & & $ 0.0110 \,-\, 0.0110 \,-\,  0.0117 $  &  $ 0.0199 \,-\, 0.0198 \,-\, 0.0213 $ \\

& & & & \\

\hline
& & & & \\
    
$(500,20,3.7,3.5)$ & $\text{C1}$ & $100$ & $ 88.08 \,-\, 92.71 \,-\, 85.63 $ & $ 84.13 \,-\, 89.91 \,-\, 81.14 $  \\
 & $\text{C2}$ & $100$ &  $ 94.55 \,-\, 92.84 \,-\, 95.40 $  & $ 94.05 \,-\, 92.55 \,-\, 94.82 $ \\
 & MSE & & $ 0.0602 \,-\, 0.0571 \,-\, 0.0690 $ &  $ 0.0937 \,-\, 0.0884  \,-\, 0.1272 $\\
 
 & & & & \\
    
 $(1000,20,3.7,3.5)$ & $\text{C1}$ & $100$ &  $ 89.28 \,-\, 93.46 \,-\, 84.65 $ & $ 86.16 \,-\, 91.77 \,-\, 80.33 $  \\
 & $\text{C2}$ & $100$ & $ 97.92 \,-\,  97.08 \,-\, 98.21 $  & $ 97.11 \,-\, 96.13 \,-\, 97.74 $ \\
 & MSE & & $ 0.0252 \,-\, 0.0239 \,-\, 0.0337 $ & $ 0.0433 \,-\, 0.0406 \,-\, 0.0664 $ \\
 
 & & & & \\

 $(500,20,40,40)$ & $\text{C1}$ & $100$ &  $85.85 \,-\,  86.51 \,-\, 85.63 $ & $ 81.29 \,-\, 82.14 \,-\, 81.14 $ \\
 & $\text{C2}$ & $100$ & $ 95.37 \,-\, 95.26 \,-\, 95.40 $  & $ 94.79 \,-\, 94.58 \,-\, 94.82 $  \\
 & MSE & & $ 0.0673 \,-\,  0.0668 \,-\, 0.0690 $ & $ 0.1234 \,-\, 0.1224 \,-\, 0.1272 $ \\
 
 & & & & \\
    
 $(1000,20,40,40)$ & $\text{C1}$ & $100$ &  $ 85.51  \,-\, 86.11 \,-\, 84.65 $ &  $ 80.73 \,-\, 81.74 \,-\, 80.33 $\\
 & $\text{C2}$ & $100$ & $ 98.08 \,-\, 98.05 \,-\, 98.21 $ & $ 97.74 \,-\, 97.55 \,-\, 97.74 $ \\
 & MSE & & $ 0.0322 \,-\, 0.0319 \,-\, 0.0337 $ & $ 0.0631 \,-\, 0.0625 \,-\, 0.0664 $  \\
 
 & & & & \\
 
\hline
\hline\end{tabular}}}
\end{table}

\subsubsection{Conditional copulas}\label{cond_empiric}

\textcolor{black}{Our next application is dedicated to the sparse estimation of conditional copula models with known link functions and known marginal laws of the covariates: the experiment is an application of the penalized problem detailed in Subsection \ref{cond_copula_marginal_known}. We specify the law of $\XX \in \Rb^d$, given some covariates $\ZZ \in \Rb^q$, as a parametric copula with parameter $\theta(\ZZ^\top\beta)$, where $\ZZ \in \Rb^q$ and $\beta \in \Rb^q$ ($p=1$ and $m=q$, with our notations of Section~\ref{section_cond_copulas}). 
We assume the marginal distribution of $X_k$, $k\in \{1,\ldots,d\}$, is unknown and does not depend on $\ZZ$. We focus on the Clayton and Gumbel copulas, and restrict ourselves to $d=2$.}

\mds

\textcolor{black}{In the same vein as in the previous application to sparse Gaussian copulas, for each sample size $n$, we draw two hundred independent batches of $n$ vectors $\UU_i$ as follows: in each batch, we simulate a sparse true $\beta_0$; then for every $i\in\{1,\ldots,n\}$, we draw the covariates $Z_{i,k}$, $ k\in\{1,\ldots,q\}$, from a uniform distribution $\Uc([0,1])$, independently of each other; then, for a given $\ZZ_i=(Z_{i,1},\ldots,Z_{i,q})^\top$, we sample $\UU_i=(U_{i,1},U_{i,2})$ from the Clayton/Gumbel copula with parameter $\theta_i:=\theta(\ZZ^\top_i\beta_0)$; we consider their rank-based transformation to obtain the pseudo-observations $\widehat{\UU}_i = (\widehat{U}_{i,1},\widehat{U}_{i,2})$, which are plugged in the penalized criterion. Here, the copula parameters $\theta_i$ are specified in terms of Kendall's tau: for each $i$, define the Kendall's tau $\tau_i:=2\arctan(\ZZ^\top_i\beta_0)/\pi$. Using the mappings of, e.g., \cite{nelsen2006introduction}, set $\theta_i = 2\tau_i/(1-\tau_i)$ for the Clayton copula and $\theta_i = 1/(1-\tau_i)$ for the Gumbel copula. We consider the dimension $q=30$, and set the cardinality of the true support $\Ac = \{k:\beta_{0,k} \neq 0, k = 1,\cdots,q\}$ as $|\Ac| = 3$, so that approximately $90\%$ of the entries of $\beta_0$ are zero coefficients. For each batch, the non-zero entries are simulated from the uniform distribution $\Uc([0.05,1])$, which ensures that the following copula parameter constraints are satisfied: $\theta_i > 0$ (resp. $\theta_i > 1$) for the Clayton (resp. Gumbel) copula. For each batch, the locations of the zero/non-zero entries in $\beta_0$ are arbitrary, but the size of $\Ac$ remains fixed. Finally, we consider the sample size $n \in \{500, 1000\}$. For a given batch, our criterion becomes:
\begin{equation*}
\widehat{\beta} \,\textcolor{black}{\in}\, \underset{\beta \in \Theta}{\arg \; \min} \; \Big\{ \Lb_n(\theta;\widehat{\Uc}_n,\mathcal{Z}_n) + n \overset{q}{\underset{k=1}{\sum}}\pp(\lambda_n,|\theta_k|)\Big\},\;\text{with}\; \Lb_n(\theta;\widehat{\Uc}_n,\mathcal{Z}_n) = -\overset{n}{\underset{i=1}{\sum}}\ln c_{\theta(\ZZ^\top_i\beta)} \big(\widehat{U}_{i,1},\widehat{U}_{i,2} \big),
\end{equation*}
where $\ln c_{\theta(\ZZ^\top_i\beta)}(.)$ is the log-density of the Clayton/Gumbel copula with parameter $\theta(\ZZ^\top_i\beta)$. As for the penalty function, we consider the SCAD, MCP and LASSO penalty functions to estimate $\Ac$ and $\beta$. Moreover, we choose $a_{\text{scad}} \in \{3.7,10,20,40,70\}$ and $b_{\text{mcp}}\in\{3.5,10,20,40,70\}$. To assess the finite sample performance of the penalization methods and as in Section~\ref{gaussian_empiric}, we report in Table~\ref{support_conditional_Gumbel_Clayton_copula} the percentage of zero coefficients correctly estimated ($\text{C1}$), the percentage of non-zero coefficients correctly identified ($\text{C2}$) and the mean squared error (MSE), averaged over the two hundred batches. For both models and low/large sample sizes, our results emphasize the poor performances of the LASSO penalization in terms of support recovery (correct identification of the zero coefficients). As for non-convex penalization, \textcolor{black}{the trade-off between $\text{C1}$ and $\text{C2}$ is more indicative than in the application to the Gaussian copula}: small $a_{\text{scad}}, b_{\text{mcp}}$ provide better $\text{C1}$ to the detriment of $\text{C2}$, which results in larger MSE since $\text{C2}$ worsens in that case. The MSE results are significantly improved for $n$ large. Mid-range $a_{\text{scad}}, b_{\text{mcp}}$ values provide an optimal trade-off in terms of the combined $\text{C1}$, $\text{C2}$ and MSE metrics.}

\begin{table}[h!]\centering\caption{Model selection and accuracy based on 200 replications. $\text{C1},\text{C2}$ are expressed in percentage, and larger numbers are better; for each MSE metric, smaller numbers are better.} \label{support_conditional_Gumbel_Clayton_copula}
{\color{black}\scalebox{0.7}{\begin{tabular}{c | c | c | c | c}\hline \hline
 & \, Gumbel $n=500$ \, & Gumbel $n=1000$ & \, Clayton $n=500$ \, & Clayton $n=1000$ \\
Penalty & $\text{C1}$ \qquad \;$\text{C2}$\; \qquad MSE & $\text{C1}$ \qquad \;$\text{C2}$\; \qquad MSE & $\text{C1}$ \qquad \;$\text{C2}$\; \qquad MSE & $\text{C1}$ \qquad \;$\text{C2}$\; \qquad MSE\\
\hline
& & & & \\          
SCAD, $a_{\text{scad}}=3.7$ & $85.41 \;-\; 64.17 \;-\; 0.5612$ & $90.11 \;-\;  75.33  \;-\;  0.2964$ & $82.57 \;-\;  59.83 \;-\;   0.6255$ & $94.20 \;-\;  75.67 \;-\;   0.1856$ \\
& & & & \\
$a_{\text{scad}}=10$ & $84.46 \;-\;  63.83  \;-\;  0.5860$ & $87.02 \;-\;  74.00   \;-\; 0.2607$ & $83.72 \;-\;  58.83  \;-\;  0.6021$ & $90.33 \;-\;  77.17  \;-\;  0.1800$ \\
& & & & \\
$a_{\text{scad}}=20$ & $86.07 \;-\;  63.33 \;-\;   0.5711$ & $87.93 \;-\;  78.50  \;-\;  0.2464$ & $82.94 \;-\;  61.17  \;-\;  0.6037$ & $90.87 \;-\;  80.83  \;-\;  0.1584$ \\
& & & & \\
$a_{\text{scad}}=40$ & $83.46 \;-\;  69.33  \;-\;  0.5246$ & $83.70 \;-\;  83.67   \;-\; 0.2280$ & $80.70  \;-\; 65.33  \;-\;  0.5602$ & $87.57  \;-\; 85.33  \;-\;  0.1456$ \\
& & & & \\
$a_{\text{scad}}=70$ & $82.11 \;-\;  72.50  \;-\;  0.4834$ & $79.50  \;-\; 86.17   \;-\; 0.2297$ & $78.72 \;-\;  67.67 \;-\;   0.5322$ & $84.59  \;-\; 86.17  \;-\;  0.1462$ \\ \hline
& & & & \\               
MCP, $b_{\text{mcp}}=3.5$ & $88.04  \;-\; 58.50  \;-\;  0.6430$ & $94.98 \;-\;  69.00  \;-\;  0.3396$ & $86.41  \;-\; 53.33  \;-\;  0.7295$ & $95.78  \;-\; 71.67   \;-\; 0.2044$ \\
& & & & \\ 
$b_{\text{mcp}}=10$ & $83.22 \;-\;  62.00  \;-\;  0.5809$ & $88.98  \;-\; 71.50 \;-\;   0.2563$ & $84.06 \;-\;  58.00 \;-\;   0.5846$ & $92.26 \;-\;  72.17  \;-\;  0.1653$ \\
& & & & \\
$b_{\text{mcp}}=20$ & $84.17 \;-\;  63.83  \;-\;  0.5711$ & $89.74 \;-\;  77.00   \;-\; 0.2386$ & $81.89 \;-\;  61.17  \;-\;  0.5927$ & $92.35 \;-\;  79.67   \;-\; 0.1442$ \\
& & & & \\
$b_{\text{mcp}}=40$ & $83.43 \;-\;  69.33  \;-\;  0.5234$ & $84.65 \;-\;  82.50  \;-\;  0.2211$ & $79.89 \;-\;  66.17  \;-\;  0.5513$ & $90.11 \;-\;  84.67 \;-\;   0.1385$ \\
& & & & \\
$b_{\text{mcp}}=70$ & $81.33 \;-\;  72.67 \;-\;   0.4844$ & $81.06  \;-\; 87.17  \;-\;  0.2249$ & $77.48 \;-\;  68.50   \;-\; 0.5312$ & $85.43  \;-\; 85.33  \;-\;  0.1476$ \\ \hline
& & & & \\
LASSO & $76.87 \;-\;  79.00  \;-\;  0.4092$ & $72.52 \;-\;  88.33  \;-\;  0.2245$ & $76.00 \;-\;  74.33  \;-\;  0.4384$ & $77.65 \;-\;  89.00  \;-\;  0.1398$ \\
\hline
\hline\end{tabular}}}
\end{table}

\section{Conclusion}

We studied the asymptotic properties of sparse M-estimator based on pseudo-observations, where we treat the marginal distributions entering the loss function as unknown, which is a common situation in copula inference. Our framework includes, among others, semi-parametric copula models and the CML inference method. We assume sparsity among the coefficients of the true copula parameter and apply a penalty function to recover the sparse underlying support. Our method is based on penalized M-estimation and accommodates data-dependent penalties, such as the LASSO, SCAD and MCP. We establish the consistency of the sparse M-estimator together with the oracle property for the SCAD and MCP cases for both fixed and diverging dimensions of the vector of parameters.
Because of the presence of non-parametric estimators of marginal cdfs' and potentially unbounded loss functions, it is difficult to exhibit simple regularity conditions and to derive the oracle property. 
This would make the large sample analysis intricate when $p$ and $d$ simultaneously diverge. We shall leave it as a future research direction. Among potential applications of our methodology, the (brute force) estimation of vine models (\cite{czado2019analyzing}) under sparsity seems to be particularly relevant. Nonetheless, checking our regularity assumptions for such highly nonlinear models would surely be challenging. 
In addition, it would be interesting to prove similar theoretical results in the case of conditional copulas for which their conditional margins would depend on covariates.

\noindent\textbf{Acknowledgements}

\mds

J.D. Fermanian was supported by the labex Ecodec (reference project ANR-11-LABEX-0047) and B. Poignard by the Japanese Society for the Promotion of Science (Grant 22K13377).

\newpage

\newpage

\appendix

\section{Multivariate rank statistics and empirical copula processes indexed by functions}\label{technicalities}


In this section, we prove some theoretical results about the asymptotic behavior of multivariate rank statistics
$$ \int f d\widehat C_n = \frac{1}{n} \sum_{i=1}^n f\big(  F_{n,1}(X_{i,1}),\ldots,F_{n,d}(X_{i,d}) \big) ,$$
for a class of maps $f:(0,1)^d\rightarrow \Rb$ that will be of ``locally'' bounded variation and sufficiently regular.
Such maps will be allowed to diverge when some of their arguments tend to zero or one, i.e. when their arguments are close to the boundaries of $(0,1)^d$.
We will prove the asymptotic normality of $\int f d\widehat C_n$, extending Theorem 3.3 of~\cite{berghaus2017weak} to any dimension $d\geq 2$.
Moreover, we will state the weak convergence $\sqrt{n}\int f d(\widehat C_n-C)$ seen as an empirical process indexed by $f\in \Fc$, for a convenient family of maps $\Fc$.

\mds

To be specific, consider a family of measurable maps $\Fc=\{f : (0,1)^d \rightarrow \Rb\}$.
As in~\cite{berghaus2017weak} and for any $\omega\geq 0$, define the weight function
$$g_{\omega,d} (\uu):= \min \{ \min_{k=1,\ldots,d} u_k, 1-\min_{j\neq 1} u_j,\ldots,1-\min_{j\neq d} u_j \}^\omega,\; \uu \in [0,1]^d.$$
When $u_1\leq u_2 \leq \cdots \leq u_d$, check that $g_{\omega,2} (\uu)=\min(u_1,1-u_2)$. Moreover, if $d=2$, then
$g_{\omega,2} (u_1,u_2)=\min (u_1,u_2,1-u_1,1-u_2)$.
To lighten notations and when there will be no ambiguity, the map $g_{\omega,d}$ will simply denoted as $g_{\omega}$ hereafter.
For technical reasons, we will need the map $\tilde g_\omega (\uu):=g_\omega (\uu) + \1(g_\omega (\uu)=0)$ for every $\uu\in [0,1]^d$.

\mds 
Recall the process $\widehat\Cb_n :=\sqrt{n}(\widehat C_n - C)$ and $\widehat\Cb_n(f)=\int f\, d\widehat \Cb_n$ for any $f \in \Fc$.
Therefore, $\widehat\Cb_n$ may be considered as a process defined on $\Fc$.
The maps $f\in \Fc$ may potentially be unbounded, particularly when their arguments tend to the boundaries of the hypercube $[0,1]^d$.
This is a common situation when $f$ is chosen as the log-density of many copula families.
Moreover, we will need to apply an integration by parts trick that has proved its usefulness in several copula-related papers, particularly~\cite{radulovic2017weak} and~\cite{berghaus2017weak}.
To this end, we introduce the following class of maps.
\begin{definition}
A map $f$ is of locally bounded Hardy Krause variation, 
a property denoted by $f\in BHKV_{loc}\big((0,1)^d\big)$, if, for any sequence $(a_n)$ and $(b_n)$, $0<a_n<b_n<1$, $a_n\rightarrow 0$, $b_n\rightarrow 1$, the restriction of $f$ to $[a_n,b_n]^d$ is of bounded Hardy-Krause variation.
\end{definition}
The concept of Hardy-Krause variation has become a standard extension of the usual concept of bounded variation for multivariate maps: see the Supplementary Material in~\cite{berghaus2017weak} or Section 2 and Appendix A in~\cite{radulovic2017weak}, and the references therein.

\mds

Denote the box $\BB_{n,m}:=(1/2n;1-1/2n]^m$ and $\BB^c_{n,m}$ its complementary in $[0,1]^m$, $1<m\leq d$.
Moreover, any sub-vector whose components are all equal to $1/2n$ (resp. $1-1/2n$) will be denoted as $\cc_{n}$ (resp. $\dd_{n}$).
For any $f\in BHKV_{loc}\big((0,1)^d\big)$ and a measurable map $g:(0,1)^d\rightarrow \Rb$, the integral $\int_{(0,1)^d} g \,df$ can be conveniently defined: see~\cite{berghaus2017weak}, Section 3.1 and its Supplementary Material.

\mds 

In terms of notations, we use the same rules as~\cite{radulovic2017weak}, Section 1.1, to manage sub-vectors and the way of concatenating them.
More precisely, for $J \subset \{1,\ldots,d\}$, $|J|$ denotes the cardinality of $J$, and the unary minus refers to the complement with respect to $\{1,\ldots,d\}$ so that $-J=\{1,\ldots,d\}\setminus J $.
For $J \subset \{1,\ldots,d\}$, $\uu_{J}$ denotes a $|J|$-tuple of real numbers whose elements are $u_j, j \in J$; the vector $\uu_J$ typically belongs to $[0,1]^{|J|}$. Now let $J_1,J_2 \subset \{1,\ldots,d\}$, $J_1 \cap J_2 = \emptyset$ and $\uu,\mathbf{v}$ two vectors in $[0,1]^d$. The concatenation symbol ``$:$'' is defined as follows: the vector $\uu_{J_1}:\mathbf{v}_{J_2}$ denotes the point $\xx \in [0,1]^{|J_1 \cup J_2|}$ such that $x_j = u_j$ for $j \in J_1$ and $x_j = v_j$ for $j \in J_2$. The vector $\uu_{J_1}:\mathbf{v}_{J_2}$ is well defined for $\uu_{J_1} \in [0,1]^{|J_1|}$ and $\mathbf{v}_{J_2}\in [0,1]^{|J_2|}$ when $J_1 \cap J_2=\emptyset$ even if $\uu_{-J_1}$ and $\mathbf{v}_{-J_2}$ remains unspecified. We use this concatenation symbol to glue together more than two sets of components: let $\uu_{J_1} \in [0,1]^{|J_1|},\mathbf{v}_{J_2}\in [0,1]^{|J_2|},\ww_{J_3} \in [0,1]^{|J_3|}$ with $J_1,J_2,J_3$ mutually disjoint sets such that $J_1 \cup J_2 \cup J_3=\{1,\ldots,d\}$. Then $\uu_{J_1}:\mathbf{v}_{J_2}:\ww_{J_3}$ is a well defined vector in $[0,1]^d$.
Finally, for a function $f:[0,1]^d \rightarrow \Rb$ and a constant vector $\mathbf{c}_{J} \in [0,1]^{|J|}$, the function $\xx_\mapsto f(\xx_{J}:\mathbf{c}_{-J})$ denotes a lower-dimensional projection of $f$ onto $[0,1]^{|J|}$. The integral of a function $g: [0,1]^{|J|} \mapsto \Rb$ w.r.t. the measure induced by the latter map will be denoted as $\int g(\xx_J)\,f(d\xx_{J}:\mathbf{c}_{-J})$.

\begin{definition}
\label{def_gomega_reg}
A family of maps $\Fc$ is said to be regular with respect to the weight function $g_{\omega,d}$ for some $\omega\in (0,1/2)$ (or $g_\omega$-regular, to be short) if
\begin{itemize}
\item[(i)]
every $f\in \Fc$ is $BVHK_{loc}\big((0,1)^d\big)$ and right-continuous;
\item[(ii)]
the map $\uu\mapsto \sup_{f\in\Fc}\min_k \min(u_k,1-u_k)^\omega |f(\uu)|$ is bounded on $(0,1)^d$,
\begin{equation}
\sup_{f\in \Fc} \int_{(0,1)^d} g_{\omega,d}(\uu)\,  |f(d\uu) | <\infty,
\label{bound_fg}
\end{equation}
and, for any partition $(J_1,J_2,J_3)$ of the set of indices $\{1,\ldots,d\}$ with $J_1\neq \emptyset$,
\begin{equation}
 \sup_{f\in \Fc}\int_{\BB_{n,|J_1|}} g_{\omega,d}(\uu_{J_1}:\cc_{n,J_2}:\dd_{n,J_3}\big) \big|f\big(d\uu_{J_1}:\cc_{n,J_2}:\dd_{n,J_3}\big)\big| =O(1).
\label{bound_Cn_1}
\end{equation}
Moreover, the latter sequence tends to zero when $J_2\neq \emptyset$.
\end{itemize}
\end{definition}

When $\Fc=\{f_0\}$ is a singleton, one simply says that the map $f_0$ is $g_\omega$-regular. Note that, if $\uu_{J_1}\in \BB_{n,|J_1|} $, then $g_{\omega,d}(\uu_{J_1}:\cc_{n,J_2}:\dd_{n,J_3}\big)=(2n)^{-\omega}$ except when $J_2=\emptyset$ and $|J_1|\geq 2$ simultaneously. 
In the latter case, $g_{\omega,d}(\uu_{J_1}:\dd_{n,-J_1}\big)=g_{\omega,|J_1|}(\uu_{J_1})$.
\begin{remark}
\label{gomega_regularity_decomposition}
Consider a family $\Fc$ of maps from $(0,1)^d$ to $\Rb$.
Assume there exist $m$ subsets $I_k\subset \{1,\ldots,d\}$, $k\in \{1,\ldots,m\}$ s.t. every member $f \in\Fc$ can be written
$$ f(\uu)= f_{1,I_1}(\uu_{I_1}) + \ldots + f_{m,I_m}(\uu_{I_m}), \;\; \uu\in (0,1)^d,  $$
for some maps $f_{k,I_k}:(0,1)^{|I_k|}\rightarrow \Rb$, $k\in \{1,\ldots,m\}$.
Define $\Fc_k=\{ f_{k,I_k}; f\in \Fc\}$ for every $k$. 
If every $\Fc_k$, $k\in \{1,\ldots,m\},$ is regular w.r.t. the weight function $g_{\omega,|I_k|}$, then 
it is easy to see that $\Fc$ is regular w.r.t. the weight function $g_{\omega,d}$.
This property may be invoked to prove the $g_\omega$ regularity of the Gaussian copula family, for instance (see Section \ref{verif_regul_Gaussian_cop} in the Appendix).
\end{remark}
\begin{remark}
\label{additiv_gomega_prop}
Any family $\Fc$ of maps defined on $(0,1)^d$ may formally 
be seen as a family $\tilde \Fc$ of maps defined on a larger dimension, say $(0,1)^{d+p}$, $p>0$: every $f\in \Fc$ defines a map $\tilde f$ on $(0,1)^{d+p}$ by setting
$\tilde f(\uu,\mathbf{v})=f(\uu)$, $\uu\in (0,1)^d$, $\mathbf{v}\in (0,1)^p$. 
It can be easily checked that, if $\Fc$ is $g_\omega$ regular then this is still the case for $\tilde \Fc$. 
\end{remark}
Beside \textcolor{black}{the} regularity conditions on the family of maps $\Fc$, we will need that the (standard) empirical process $\alpha_n$ is well-behaved.
To this aim, we recall the so-called conditions 4.1, 4.2 and 4.3 in~\cite{berghaus2017weak}.
\begin{assumption}
\label{oscillation_modulus_assump}
There exists $\textcolor{black}{\kappa}_1\in (0,1/2]$ such that, for all $\mu\in (0,\textcolor{black}{\kappa}_1)$ and all sequences $\delta_n\rightarrow 0$, we have
$$  \sup_{|\uu-\mathbf{v}|<\delta_n} \frac{|\alpha_n(\uu) - \alpha_n(\mathbf{v})|}{|\uu - \mathbf{v}|^\mu \vee n^{-\mu}}= o_P(1).    $$
\end{assumption}
\begin{assumption}
\label{tail_empir_processes}
There exists $\textcolor{black}{\kappa}_2 \in (0,1/2 ]$ and $\textcolor{black}{\kappa}_3\in (1/2,1]$ such that, for any $\textcolor{black}{\nu} \in (0,\textcolor{black}{\kappa}_2)$, any $\lambda \in (0,\textcolor{black}{\kappa}_3)$ and any $j\in \{1,\ldots,d\}$, we have
$$ \sup_{u\in (0,1)}\Big| \frac{\sqrt{n} \big\{ G_{nj}(u)-u \big\}}{u^{\textcolor{black}{\nu}} (1-u)^{\textcolor{black}{\nu}}} \Big| +
\sup_{u\in (1/n^\lambda,1-1/n^\lambda)}\Big| \frac{\sqrt{n} \big\{ G^-_{nj}(u)-u \big\}}{u^{\textcolor{black}{\nu}} (1-u)^{\textcolor{black}{\nu}}} \Big| =O_P(1).$$
\end{assumption}

\begin{assumption}
\label{cond_wc_empir_process}
The empirical process $(\alpha_n)$ converges weakly in $\ell^\infty([0,1]^d)$ to some limit process $\alpha_C$ which has continuous sample paths, almost surely.
\end{assumption}

As pointed out in~\cite{berghaus2017weak}, such conditions~\ref{oscillation_modulus_assump}-\ref{cond_wc_empir_process} are satisfied for i.i.d. data with
$\textcolor{black}{\kappa}_1=1/2$, $\textcolor{black}{\kappa}_2=1/2$ and $\textcolor{black}{\kappa}_3=1$. In the latter case, the limiting process $\alpha_C$ is a $C$-Brownian bridge, such that $\text{cov}\big\{\alpha_C(\uu),\alpha_C(\mathbf{v})\big\}=C(\uu \wedge \mathbf{v}) - C(\uu)C(\mathbf{v})$ for any $\uu$ and $\mathbf{v}$ in $[0,1]^d$, with the usual notation
$\uu \wedge \mathbf{v}=\big(\min(u_1,v_1),\ldots,\min(u_d,v_d) \big)$.
More generally, if the process $(\XX_i)_{i\in \Nb}$ is \textcolor{black}{strongly} stationary and geometrically $\alpha$-mixing, then
the assumptions~\ref{oscillation_modulus_assump}-\ref{cond_wc_empir_process} are still satisfied, with the same choice $\textcolor{black}{\kappa}_1=1/2$, $\textcolor{black}{\kappa}_2=1/2$ and $\textcolor{black}{\kappa}_3=1$
(Proposition 4.4 in~\cite{berghaus2017weak}). In the latter case, the covariance of the limiting process is more complex:
$\text{cov}\big\{\alpha_C(\uu),\alpha_C(\mathbf{v})\big\}=\sum_{j\in \Zb} \text{cov}\big\{ \1( \UU_0 \leq \uu), \1( \UU_j \leq \uu) \big\}$.

\begin{assumption}
\label{regularity_cond_wc}
For any $I\subset \{1,\ldots,d\}$, $I\neq \emptyset$, any $f$ that belongs to a regular family $\Fc$ and for any continuous map $h:[0,1]^{|I|}\rightarrow \Rb$, the sequence
$\int_{\BB_{n,|I|}} h(\uu_I) g_\omega\big(\uu_{I}:\dd_{n,-I}\big) \, f\big(d\uu_{I}:\dd_{n,-I}\big)$ is convergent when $n\rightarrow \infty$.
Its limit is denoted as
$\int_{(0,1)^{|I|}} h(\uu_I) g_\omega\big(\uu_{I}:\1_{-I}\big) \, f\big(d\uu_{I}:\1_{-I}\big)$, i.e. it is
given by an integral w.r.t. a borelian measure on $(0,1)^{|I|}$ denoted as $f\big(\cdot:\1_{-I}\big)$.
\end{assumption}
The latter regularity condition is required to get the weak convergence of our main statistic in Theorem~\ref{Th_fondam_multiv_rank_stat}.
Note that the map $f(\uu)$ is not defined when one component of
$\uu$ is one. Therefore, the way we write the limits in Assumption~\ref{regularity_cond_wc} is a slight abuse of notation.
Typically, $f(\uu_I:\1_{-I})$ will be defined as the limit $f(\uu_I:\mathbf{v}_{-I})$ when $\mathbf{v}_{-I}$ tends to $\1_{-I}$ when such a limit exists.
In other standard situations, there exists a measurable map $h_f$ such that
$f\big(d\uu_I:\uu_{-I}\big)=h_f(\uu)\,d\uu_I$. If it is possible to extend by continuity the map $\uu \mapsto g_\omega(\uu)h_f(\uu)$ when $\uu_{-I}$
tends to $\1_{-I}$, simply set $g_\omega\big(\uu_{I}:\1_{-I}\big)  f\big(d\uu_I:\1_{-I}\big)= g_\omega\big(\uu_{I}:\1_{-I}\big)  h_f(\uu_I:\1_{-I})\,d\uu_I$.
But other more complex situations.

\mds

To get the weak convergence of the process $\widehat \Cb_n$ indexed by the maps in $\Fc$, we will need to strengthen the latter assumption~\ref{regularity_cond_wc}, so that
it becomes true uniformly over $\Fc$.
\begin{assumption}
\label{regularity_cond_wc_stronger}
For any $I\subset \{1,\ldots,d\}$ and any continuous map $h:[0,1]^{|I|}\rightarrow \Rb$,
$$ \sup_{f\in \Fc} \big|
\int_{\BB_{n,|I|}} h(\uu_I) g_\omega\big(\uu_{I}:\dd_{n,-I}\big) \, f\big(d\uu_{I}:\dd_{n,-I}\big)
-  \int_{(0,1)^{|I|}} h(\uu_I) g_\omega\big(\uu_{I}:\1_{-I}\big) \, f\big(d\uu_{I}:\1_{-I}\big) \big| \longrightarrow 0,$$
when $n\rightarrow \infty$.
\end{assumption}

\begin{theorem}\label{Th_fondam_multiv_rank_stat}
(i) Assume the assumptions~\ref{cond_reg_copula},\ref{oscillation_modulus_assump} and~\ref{tail_empir_processes} are satisfied and consider a family $\Fc$ of maps that
is $g_\omega$-regular, for some $\omega \in \big(0,\min(\frac{\textcolor{black}{\kappa}_1}{2(1-\textcolor{black}{\kappa}_1)},\frac{\textcolor{black}{\kappa}_2}{2(1-\textcolor{black}{\kappa}_2)},\textcolor{black}{\kappa}_3 - 1/2)\big)$. Then, for any $f\in \Fc$, we have
\begin{eqnarray}
\lefteqn{ \int f \,d\widehat\Cb_n =
(-1)^{d}\int_{\BB_{n,d}} \bar\Cb_n(\uu) \, f\big(d\uu\big) \nonumber }\\
  & \hspace*{-0.5cm}+&\hspace*{-0.5cm} \sum_{ \substack{I \subset \{1,\ldots,d\} \\ I\neq \emptyset, I\neq \{1,\ldots,d\} } } (-1)^{|I|}
  \int_{\BB_{n,|I|}} \frac{ \bar\Cb_n(\uu_{I}:\1_{-I})}{\tilde g_\omega(\uu_{I}:\1_{-I})}
   g_\omega\big(\uu_{I}:\dd_{n,-I}\big) \, f\big(d\uu_{I}:\dd_{n,-I}\big) + r_n(f),
\label{key_prop_1}
\end{eqnarray}
where $\bar \Cb_n(\uu):= \alpha_n(\uu) - \sum_{k=1}^d \dot C_k (\uu)\alpha_n(\1_{-k}:u_k)$, $\uu\in [0,1]^d$ and $\sup_{f\in \Fc}|r_n(f)|=o_P(1)$.

\mds

(ii) In addition, assume the conditions~\ref{cond_wc_empir_process} and~\ref{regularity_cond_wc} apply.
Then, for any function $f\in \Fc$, the \textcolor{black}{sequence of random variables} 
$ \sqrt{n}\int f \,d(\widehat C_n-C)$ tends in law to the centered Gaussian r.v.
\begin{equation}
 (-1)^d\int_{(0,1)^d} \Cb(\uu) \, f(d\uu)+
\sum_{ \substack{I \subset \{1,\ldots,d\} \\ I\neq \emptyset, I\neq \{1,\ldots,d\} } } (-1)^{|I|}
  \int_{(0,1)^{|I|}} \Cb(\uu_{I}:\1_{-I}) \, f\big(d\uu_{I}:\1_{-I}\big),
  \label{limiting_law_ref}
  \end{equation}
where $\Cb(\uu):= \alpha_C(\uu) - \sum_{k=1}^d \dot C_k (\uu)\alpha_C(\1_{-k}:u_k)$ for any $\uu\in [0,1]^d$.

(iii)
Under the latter assumptions of (i) and (ii), in addition to Assumption~\ref{regularity_cond_wc_stronger},
$\widehat\Cb_n $ weakly tends in $\ell^{\infty}(\Fc)$ to a Gaussian process.
\end{theorem}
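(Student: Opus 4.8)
The plan is to write $\h\Cb_n(f)$, uniformly over $f\in\Fc$, as a fixed continuous linear functional of the (suitably weighted) empirical copula process $\bar\Cb_n$ plus a remainder that is already uniformly negligible, and then to read off weak convergence in $\ell^{\infty}(\Fc)$ from an extended continuous mapping theorem (\cite{van1996weak}, Thm.~1.11.1), so that no separate finite-dimensional/tightness argument is needed. Concretely, part~(i) of Theorem~\ref{Th_fondam_multiv_rank_stat} already supplies the representation~(\ref{key_prop_1}) with $\sup_{f\in\Fc}|r_n(f)|=o_P(1)$, so it suffices to handle the ``main term''
\[
\tilde\Cb_n(f):=(-1)^{d}\int_{\BB_{n,d}} \bar\Cb_n(\uu)\,f(d\uu)+\sum_{\substack{I\subset\{1,\ldots,d\}\\ I\neq\emptyset,\, I\neq\{1,\ldots,d\}}}(-1)^{|I|}\int_{\BB_{n,|I|}}\frac{\bar\Cb_n(\uu_{I}:\1_{-I})}{g_\omega(\uu_{I}:\1_{-I})}\,g_\omega(\uu_{I}:\dd_{n,-I})\,f(d\uu_{I}:\dd_{n,-I}).
\]
I would view $\bar\Cb_n=T(\alpha_n)$, where $T\beta(\uu):=\beta(\uu)-\sum_{k=1}^d\dot C_k(\uu)\beta(\1_{-k}:u_k)$ is a bounded (Lipschitz) linear operator; under Assumption~\ref{cond_reg_copula} and Assumptions~\ref{oscillation_modulus_assump}--\ref{cond_wc_empir_process}, and for $\omega$ in the admissible range dictated by $\theta_1,\theta_2,\theta_3$, the process $\alpha_n$ converges weakly in $\ell^{\infty}([0,1]^d)$ with respect to the $g_\omega$-weighted uniform metric — this is the content of \cite{berghaus2017weak} that already underpins part~(i) — hence $\bar\Cb_n\rightsquigarrow\Cb$ in that weighted space, with $\Cb$ a tight centered Gaussian element there.

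Next I would introduce, on the weighted space, the bounded linear maps $\Phi_n,\Phi:\beta\mapsto\big(f\mapsto\cdots\big)\in\ell^{\infty}(\Fc)$, where $\Phi_n(\beta)$ is the right-hand side above with $\beta$ substituted for $\bar\Cb_n$, and $\Phi(\beta)(f):=(-1)^d\int_{(0,1]^d}\beta(\uu)f(d\uu)+\sum_I(-1)^{|I|}\int_{(0,1]^{|I|}}\beta(\uu_I:\1_{-I})f(d\uu_I:\1_{-I})$, the boundary integrals being interpreted through $g_\omega$ exactly as in Assumption~\ref{regularity_cond_wc_stronger}. Writing $\beta=g_\omega\cdot(\beta/g_\omega)$ and invoking the $g_\omega$-regularity bounds~(\ref{bound_fg}) and~(\ref{bound_Cn_1}), which are stated with a $\sup_{f\in\Fc}$, one obtains $\sup_{f\in\Fc}|\Phi_n(\beta)(f)|\le C\sup_{\uu}|\beta(\uu)|/g_\omega(\uu)$ with $C$ independent of $n$, and likewise for $\Phi$; in particular $\Phi$ is continuous, so $\Phi(\Cb)$ is a tight Gaussian process on $\Fc$. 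The decisive step is the ``continuous convergence'' $\Phi_n(\beta_n)\to\Phi(\beta)$ in $\ell^{\infty}(\Fc)$ whenever $\beta_n\to\beta$ in the weighted metric with $\beta$ an (a.s.) realisation of $\Cb$: split $\Phi_n(\beta_n)-\Phi(\beta)=\Phi_n(\beta_n-\beta)+\big(\Phi_n(\beta)-\Phi(\beta)\big)$; the first summand is $\le C\sup_{\uu}|\beta_n-\beta|/g_\omega\to0$ by the uniform operator bound; in the second, $\int_{\BB_{n,d}}\to\int_{(0,1]^d}$ uniformly over $\Fc$ by dominated convergence against the finite majorant in~(\ref{bound_fg}), while each lower-dimensional boundary term converges uniformly over $\Fc$ precisely by Assumption~\ref{regularity_cond_wc_stronger}, applied with the continuous factor $h$ built from $\uu\mapsto\beta(\uu_I:\1_{-I})/g_\omega(\uu_I:\1_{-I})$. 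The extended continuous mapping theorem then yields $\tilde\Cb_n=\Phi_n(\bar\Cb_n)\rightsquigarrow\Phi(\Cb)$ in $\ell^{\infty}(\Fc)$, and adding back the uniform $o_P(1)$ remainder gives $\h\Cb_n\rightsquigarrow\Phi(\Cb)$; the limit is Gaussian, and its finite-dimensional laws are exactly the ones identified in part~(ii), namely~(\ref{limiting_law_ref}).

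I expect the principal obstacle to be making this uniform-in-$\Fc$ continuous convergence $\Phi_n\to\Phi$ rigorous: the integration domains $\BB_{n,|I|}$ expand to $(0,1]^{|I|}$ while the frozen coordinates $\dd_{n,-I}$ simultaneously drift to $\1_{-I}$, and every $f\in\Fc$ is only of \emph{locally} bounded Hardy--Krause variation, so the passage to the limit must be controlled entirely through the weighted-integrability package~(\ref{bound_fg})--(\ref{bound_Cn_1}) together with Assumption~\ref{regularity_cond_wc_stronger} — producing a single dominating function valid simultaneously for all $f\in\Fc$ is the delicate point, and amounts to the uniform refinement of the estimates already behind~(i)--(ii). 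A secondary technical point is pinning down the exact weight for which $\alpha_n$, and hence $\bar\Cb_n=T(\alpha_n)$, converges in the weighted space, which is where the admissible range of $\omega$ in terms of $\theta_1,\theta_2,\theta_3$ enters.
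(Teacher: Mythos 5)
Your plan for parts (ii)--(iii) is essentially the paper's own: you replace $\widehat\Cb_n(f)$ by a functional $\Phi_n$ of the weighted process $\bar\Cb_n/\tilde g_\omega$, prove continuous convergence $\Phi_n(\beta_n)\to\Phi(\beta)$ using the uniform bounds~(\ref{bound_fg})--(\ref{bound_Cn_1}) and Assumption~\ref{regularity_cond_wc_stronger}, and conclude by the extended continuous mapping theorem. (The paper uses the extended CMT only for the pointwise statement (ii) and then runs a bounded-Lipschitz-metric computation for (iii); your proposal to invoke the extended CMT directly at the level of $\ell^\infty(\Fc)$ is a legitimate and slightly cleaner variant, since it subsumes the fidi-plus-tightness step, but it rests on exactly the same uniform estimates.)

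The genuine gap is part (i), which you assume rather than prove: the representation~(\ref{key_prop_1}) with $\sup_{f\in\Fc}|r_n(f)|=o_P(1)$ is the core of the theorem and is where almost all of the technical work lies. Establishing it requires (a) showing $\sqrt{n}\int_{\BB_{n,d}^c}f\,dC\to 0$ uniformly over $\Fc$, by decomposing $\BB_{n,d}^c$ into boxes and using the envelope bound $\sup_f\min_k\min(u_k,1-u_k)^\omega|f(\uu)|<\infty$ together with the uniformity of the margins (this is where $\omega<1/2$ is used); (b) applying the multivariate integration-by-parts formula of~\cite{radulovic2017weak} on the shrinking box $\BB_{n,d}$, which produces not only the terms appearing in~(\ref{key_prop_1}) but also lower-dimensional terms with coordinates pinned at $\cc_n=1/2n$ and a discrete boundary term $\Delta(\widehat\Cb_n f)(\BB_{n,d})$; (c) replacing $\widehat\Cb_n$ by $\bar\Cb_n$ inside each integral via the weighted approximation $\|(\widehat\Cb_n-\bar\Cb_n)/g_\omega\|_\infty=o_P(1)$ (Theorem 4.5 of~\cite{berghaus2017weak}), which is only available for $\omega$ in the stated range determined by $\theta_1,\theta_2,\theta_3$; and (d) proving that all terms with a coordinate at $1/2n$ and the $\Delta$ term are $o_{P}(1)$ uniformly over $\Fc$, by combining Lemma 4.9 of~\cite{berghaus2017weak} (control of $\bar\Cb_n/g_\omega$ near the lower boundary) with the requirement in Definition~\ref{def_gomega_reg} that the integrals in~(\ref{bound_Cn_1}) vanish when $J_2\neq\emptyset$. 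None of this appears in your proposal, and without it neither the main term you manipulate in (ii)--(iii) nor the uniform negligibility of the remainder is available, so the argument as written does not prove the stated theorem.
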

Points (i) and (ii) of the latter theorem yield a generalization of Theorem 3.3 in~\cite{berghaus2017weak} for any arbitrarily chosen dimension $d\geq 2$,
and uniformly over a class of functions. Note that it is always possible to set $\Fc=\{f_0\}$ and we have proved the weak convergence of a single 
multivariate rank statistic.
The proof is based on the integration by part formula in~\cite{radulovic2017weak}.
Note that, in dimension $d=2$, $\Cb(u_1,1)=\Cb(1,u_2)=0$ for any $u_1,u_2\in (0,1)$. Thus, in the bivariate case, the limiting law of $\sqrt{n}\int f \,d(\widehat C_n-C)$ is simply the law of
$\int_{(0,1)^2} \Cb \, df$, as stated in Theorem 3.3 in~\cite{berghaus2017weak}.
Nonetheless, this is no longer true in dimension $d>2$, explaining the more complex limiting laws in our Theorem~\ref{Th_fondam_multiv_rank_stat}.
Finally, the sum in~(\ref{limiting_law_ref}) can be restricted to the subsets $I$ such that $|I|\geq 2$. Indeed, when $I$ is a singleton, then
$\Cb(\uu_{I}:\1_{-I})$ is zero a.s.

\mds

The point (iii) of Theorem~\ref{Th_fondam_multiv_rank_stat} extends Theorem 5 in~\cite{radulovic2017weak}. The latter one was restricted to 
right-continuous maps $f$ of bounded Hardy-Krause variation and defined on the whole hypercube $[0,1]^d$. For any of these maps $f$, there exists 
a finite signed Borel measure bounded $\nu_f$ on $[0,1]^d$ such that $f(\uu)=\nu_f\big([\0,\uu]\big)$ for every $\uu\in [0,1]^d$ (Theorem 3 in~\cite{aistleitner2014functions}). In particular, they are bounded on $[0,1]^d$, an excessively demanding assumption in many cases. 
Indeed, for the inference of copulas, many families of relevant maps $\Fc$ contain elements that are not of bounded variation or cannot be defined on $[0,1]^d$ as a whole, as pointed out by several authors, following~\cite{segers2012asymptotics}; see Section 3.4 in~\cite{radulovic2017weak} too. This is in particular the case with the Canonical Maximum Likelihood method and Gaussian copulas. In such a case, $f(\uu)=-\ln c_{\Sigma}(\uu)$ and $c_\Sigma$ is the density of a Gaussian copula with correlation parameter $\Sigma$.
Therefore, we have preferred the less restrictive approach of~\cite{berghaus2017weak}, that can tackle unbounded maps $f$ (in particular copula log-densities), through the concept of locally bounded Hardy Krause variation.

We deduce from Theorem~\ref{Th_fondam_multiv_rank_stat} an uniform law of large numbers too.
\begin{cor}
Assume the assumptions~\ref{cond_reg_copula},\ref{oscillation_modulus_assump} and~\ref{tail_empir_processes} are satisfied and consider a family $\Fc$ of maps that
is $g_\omega$-regular (for some $\omega$ in the same range as in Theorem~\ref{Th_fondam_multiv_rank_stat}). If $\sup_{\uu\in (0,1)^d} |\alpha_n(\uu)|=O_P(1)$ then, for any positive sequence $(\mu_n)$ of real numbers s.t. $\mu_n\rightarrow +\infty$ \textcolor{black}{when $n\rightarrow \infty$}, we have
\begin{equation*}
 \sup_{f\in \Fc} \big| \int f \,d(\widehat C_n-C) \big| = O_P(\mu_n/\sqrt{n}).
\label{GC_copulas}
\end{equation*}
\label{cor_GC}
\end{cor}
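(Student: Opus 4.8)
The plan is to read the bound directly off the representation~(\ref{key_prop_1}) of Theorem~\ref{Th_fondam_multiv_rank_stat}(i), whose hypotheses are precisely Assumptions~\ref{cond_reg_copula},~\ref{oscillation_modulus_assump},~\ref{tail_empir_processes} together with the $g_\omega$-regularity of $\Fc$ in the admissible range of $\omega$ --- i.e. exactly what the corollary assumes. Since $\widehat\Cb_n=\sqrt n(\widehat C_n-C)$, we have $\sqrt n\int f\,d(\widehat C_n-C)=\int f\,d\widehat\Cb_n$, so it suffices to prove $\sup_{f\in\Fc}\big|\int f\,d\widehat\Cb_n\big|=O_P(\mu_n)$. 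By~(\ref{key_prop_1}) this quantity is a finite sum of terms: the leading one, $\int_{\BB_{n,d}}\bar\Cb_n\,df$, which on $\BB_{n,d}$ equals $\int_{\BB_{n,d}}(\bar\Cb_n/\tilde g_\omega)(\uu)\,g_\omega(\uu)\,f(d\uu)$ (recall $\tilde g_\omega:=g_\omega+\1\{g_\omega=0\}$, and $g_\omega>0$ on $\BB_{n,d}$); the terms indexed by nonempty proper subsets $I\subset\{1,\dots,d\}$, each of the form $\int_{\BB_{n,|I|}}(\bar\Cb_n/\tilde g_\omega)(\uu_I:\1_{-I})\,g_\omega(\uu_I:\dd_{n,-I})\,f(d\uu_I:\dd_{n,-I})$, where for singleton $I$ the ratio is read through $\tilde g_\omega$ exactly as in the proof of Theorem~\ref{Th_fondam_multiv_rank_stat}; and the remainder $r_n(f)$ with $\sup_{f\in\Fc}|r_n(f)|=o_P(1)$. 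Bounding the ratios by $\|\bar\Cb_n/\tilde g_\omega\|_\infty$ and using that $\sup_{f\in\Fc}\int_{(0,1)^d}g_\omega\,|f(d\uu)|<\infty$ by~(\ref{bound_fg}) and $\sup_{f\in\Fc}\int_{\BB_{n,|I|}}g_\omega(\uu_I:\dd_{n,-I})\,|f(d\uu_I:\dd_{n,-I})|=O(1)$ by~(\ref{bound_Cn_1}) (with an empty $J_2$-block, hence $O(1)$ rather than $o(1)$), we get, uniformly over $f\in\Fc$,
\[
\Big|\int f\,d\widehat\Cb_n\Big|\;\le\; M\,\Big\|\frac{\bar\Cb_n}{\tilde g_\omega}\Big\|_\infty \;+\; \sup_{f\in\Fc}|r_n(f)|,
\]
for a constant $M$ depending only on $d$ and $\Fc$.

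It then remains to show $\|\bar\Cb_n/\tilde g_\omega\|_\infty=O_P(\mu_n)$. Without loss of generality assume $\mu_n$ grows no faster than $\ln\ln n$ (otherwise replace $\mu_n$ by $\min(\mu_n,\ln\ln n)$, which still diverges and yields a stronger conclusion), and set $\delta_n:=\mu_n^{-1/\omega}\downarrow 0$. Split $[0,1]^d$ at the level $\delta_n$. On $N(0,\delta_n)=\{\uu:0<g_1(\uu)\le\delta_n\}$, Lemma~4.9 of~\cite{berghaus2017weak} (the boundary control used in the proof of Theorem~\ref{Th_fondam_multiv_rank_stat}) gives $\sup_{\uu\in N(0,\delta_n)}|\bar\Cb_n(\uu)|/g_\omega(\uu)=o_P(1)$; on $\{g_1(\uu)=0\}$ one has $\tilde g_\omega\equiv1$, so $|\bar\Cb_n(\uu)/\tilde g_\omega(\uu)|=|\bar\Cb_n(\uu)|\le(d+1)\|\alpha_n\|_\infty=O_P(1)$ by the definition of $\bar\Cb_n$ and $0\le\dot C_k\le1$; and on $\{g_1(\uu)>\delta_n\}$, where $g_\omega(\uu)=g_1(\uu)^\omega>\delta_n^\omega=\mu_n^{-1}$, we get $|\bar\Cb_n(\uu)|/g_\omega(\uu)<\mu_n(d+1)\|\alpha_n\|_\infty=O_P(\mu_n)$. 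Taking the supremum over the three regions yields $\|\bar\Cb_n/\tilde g_\omega\|_\infty=O_P(\mu_n)$. Plugging this and $\sup_{f\in\Fc}|r_n(f)|=o_P(1)$ into the displayed inequality gives $\sup_{f\in\Fc}\big|\int f\,d\widehat\Cb_n\big|=O_P(\mu_n)$, i.e.~(\ref{GC_copulas}).

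The only genuine obstacle is this last step, the control of $\|\bar\Cb_n/\tilde g_\omega\|_\infty$. Because the corollary deliberately does not assume weak convergence of $\alpha_n$ (Assumption~\ref{cond_wc_empir_process}), only $\|\alpha_n\|_\infty=O_P(1)$ and the oscillation/tail Assumptions~\ref{oscillation_modulus_assump}--\ref{tail_empir_processes}, one cannot invoke tightness of the weighted, boundary-corrected process to obtain an $O_P(1)$ bound on the whole hypercube at once; one must instead trade the vanishing boundary modulus of Lemma~4.9 against a crude $O_P(\mu_n)$ bound away from the boundary. This is exactly why the statement carries the extra diverging factor $\mu_n$, and why the $\ln(\ln n)$ factor propagates into Theorem~\ref{bound_proba}. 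Everything else is routine bookkeeping, and the uniformity in $f\in\Fc$ is already built into the $g_\omega$-regularity bounds~(\ref{bound_fg})--(\ref{bound_Cn_1}) and into the $o_{P,u}$ estimate for $r_n(f)$ supplied by Theorem~\ref{Th_fondam_multiv_rank_stat}(i).
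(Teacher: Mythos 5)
Your proof is correct and follows essentially the same route as the paper's: both reduce~(\ref{GC_copulas}) to bounding the weighted process $\bar\Cb_n/g_\omega$ by $O_P(\mu_n)$ through the choice $\delta_n=\mu_n^{-1/\omega}$, trading the lower bound $g_\omega\geq \mu_n^{-1}$ away from the boundary against the crude bound $\|\bar\Cb_n\|_\infty\leq (d+1)\|\alpha_n\|_\infty=O_P(1)$, with uniformity in $f$ supplied by~(\ref{bound_fg})--(\ref{bound_Cn_1}). The only cosmetic difference is that you split $[0,1]^d$ into three regions and invoke Lemma~4.9 of~\cite{berghaus2017weak} on the strip $N(0,\delta_n)$, whereas the paper confines the argument to the sets $\{(\uu_I:\dd_{n,-I})\}$ directly; this does not change the substance of the argument.
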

\begin{remark}
\label{rem_ULLN_bis}
In the literature, some ULLN for copula models have already been applied, but without specifying the corresponding rates of convergence to zero.
In semi-parametric models, some authors invoked some properties of bracketing numbers (Lemma 1 in~\cite{chen2005pseudo}; Th. 17 in the working paper version of~\cite{fermanian2002weak}): if, for every $\delta>0$, the $L^1(C)$ bracketing number of $\Fc$ (denoted $N_{[\cdot]}\big(\delta, \Fc,L^1(C)\big)$ in the literature) is finite, then 
$ \sup_{f\in \Fc} \big| \int f \,d(\widehat C_n-C) \big|$ tends to zero a.s.
\end{remark}

\section{Asymptotic variance of $\WW$}\label{asymptoticvarianceZ}

Here, we provide a plug-in estimator of the variance-covariance matrix of the vector $\WW$ that appeared in Theorem~\ref{oracle_property}.
The latter vector is centered Gaussian and, for every $(i,k)\in \Ac^2$, 
{\footnotesize{\begin{eqnarray}
\lefteqn{ \Eb[W_j W_k]=
\int_{(0,1)^{2d}} \Eb\big[\Cb(\uu)\Cb(\uu')\big]  \, \partial_{\theta_j}\ell(\theta_0;d\uu)\partial_{\theta_k}\ell(\theta_0;d\uu') \label{formula_cov_Z} }\\
&+& \sum_{ \substack{I \subset \{1,\ldots,d\} \\ I\neq \emptyset, I\neq \{1,\ldots,d\} } } (-1)^{d+|I|}
    \int_{(0,1)^{d+|I|}} \Eb\big[ \Cb(\uu) \Cb(\uu'_{I}:\1_{-I}) \big]\, \partial_{\theta_k}\ell(\theta_0;d\uu'_{I};\1_{-I})\, \partial_{\theta_j}\ell(\theta_0;d\uu)   \nonumber  \\
&+&  \sum_{ \substack{I \subset \{1,\ldots,d\} \\ I\neq \emptyset, I\neq \{1,\ldots,d\} } } (-1)^{d+|I|}
  \int_{(0,1)^{|I|}} \Eb\big[ \Cb(\uu') \Cb(\uu_{I}:\1_{-I}) \big]\, \partial_{\theta_j}\ell(\theta_0;d\uu_{I};\1_{-I})\, \partial_{\theta_k}\ell(\theta_0;d\uu')   \nonumber \\
&+&
\sum_{ \substack{I,I' \subset \{1,\ldots,d\} \\ I,I'\neq \emptyset; I,I'\neq \{1,\ldots,d\} } } 
(-1)^{|I|+ |I'|}
   \int_{(0,1)^{|I|+|I'|}}  
  \Eb\big[ \Cb(\uu_{I}:\1_{-I})  \Cb(\uu'_{I'}:\1_{-I'}) \big]
  \partial_{\theta_j}\ell(\theta_0;d\uu_{I};\1_{-I})
 \, \partial_{\theta_k}\ell(\theta_0;d\uu'_{I'};\1_{-I'}).
\nonumber
\end{eqnarray}}}
In the latter formula, we will replace $\theta_0$ with $\widehat\theta_n$. 
Denote the covariance function of the process $\alpha_C$ as 
$v_{\alpha}$, i.e. $v_\alpha(\uu,\mathbf{v}):=\Eb\big[\alpha_C(\uu)\alpha_C(\mathbf{v}) \big]$, for every $\uu$ and $\mathbf{v}$ in $[0,1]^d$.
Then, the covariance function of the process $\Cb$ is 
\begin{eqnarray*}
\lefteqn{
\Eb\big[\Cb(\uu)\Cb(\mathbf{v})\big] =v_\alpha(\uu,\mathbf{v}) 
- \sum_{k=1}^d \dot C_k (\mathbf{v}) v_\alpha\big(\uu,(\1_{-k}:v_k)\big)    }\\
&-& \sum_{k=1}^d \dot C_k (\uu) v_\alpha\big(\mathbf{v},(\1_{-k}:u_k)\big)
+  \sum_{k,k'=1}^d \dot C_k (\uu) \dot C_k (\mathbf{v}) v_\alpha\big((\1_{-k}:u_k),(\1_{-k}:v_{k'})\big).
\end{eqnarray*}
In the latter formula, every partial derivative of the copula $C$ could be empirically approximated, as in~\citep{remillard2009testing} for instance. 
Moreover, assume we have found an estimator of the map $(\uu,\mathbf{v})\mapsto v_\alpha(\uu,\mathbf{v})$, denoted $\widehat v_\alpha$. With i.i.d data, 
$v_\alpha(\uu,\mathbf{v})=C(\uu\wedge \mathbf{v}) - C(\uu)C(\vv)$ is obviously approximated by 
$\widehat v_\alpha(\uu,\mathbf{v}):=\widehat C_n(\uu\wedge \mathbf{v}) - \widehat C_n(\uu)\widehat C_n(\mathbf{v})$.
This would yield and estimator 
of $\Eb\big[\Cb(\uu)\Cb(\uu')\big]$, for every $(\uu,\mathbf{v})\in (0,1)^d$, 
that can be plugged in~(\ref{formula_cov_Z}). Taking all pieces together yields an estimator of $\Eb[W_j W_k]$.

\section{Proofs of Theorem~\ref{Th_fondam_multiv_rank_stat} and Corollary~\ref{cor_GC}}\label{proof_appendix_multi}

\subsection{Proof of Theorem~\ref{Th_fondam_multiv_rank_stat}}

To state (i), we follow the same paths as in the proof of Theorem 3.3 in~\citep{berghaus2017weak}.
For any $0<a<b<1/2$, define $N(a,b):=\{\uu\in [0,1]^d: a< g_{1,d}(\uu)\leq b\}$.
Note that, when $d=2$, $N(a,1/2)=(a,1-a)^2$, but this property does not extend to larger dimensions.
Any remainder term that tends to zero in probability uniformly w.r.t. $f\in \Fc$ will be denoted as $o_{P,u}(1)$.
 Note that, for any $f\in \Fc$,
 $$  \sqrt{n}\Big\{ \int_{\BB_{n,d}} f \, d\widehat C_n - \Eb\big[ f(\UU) \big] \Big\}= \int_{\BB_{n,d}} f\, d\widehat \Cb_n - \sqrt{n}\int_{\BB_{n,d}^c} f\, dC =: A_n - r_{n1}.$$

\mds
Let us prove that $\sup_{f\in \Fc}|r_{n1}|=o(1)$. Indeed, a vector $\uu$ belongs to $\BB_{n,d}^c$ iff one of its components is smaller than $1/2n$ or is strictly larger than $1-1/2n$. Thus, let us decompose $\BB_{n,d}^c$ as the disjoint union of ``boxes'' on $[0,1]^d$ such as
$$ \BB_{n}^{J_1,J_2,J_3}:=\big\{ \uu \:| \: \uu_{J_1}\in [0,1/2n]^{|J_1|},\uu_{J_2}\in (1/2n,1-1/2n]^{|J_2|},\uu_{J_3}\in (1-1/2n,1]^{|J_3|} \big\},$$
where $J_1\cup J_3 \neq \emptyset$ and $(J_1,J_2,J_3)$ is a partition of $\{1,\ldots,d\}$.
Note that, for any $\uu \in \BB_{n}^{J_1,J_2,J_3}$, we have
$$\{\min_k \min(u_k,1-u_k)\}^{-\omega}\leq \sum_{k\in I_1\cup I_3} \{u_k^{-\omega}+(1-u_k)^{-\omega} \}.$$
Since there exists a constant $C_\Fc$ such that $\sup_{\uu\in [0,1]^d} \{\min_k \min(u_k,1-u_k)\}^{\omega}  |f(\uu)| \leq C_\Fc$ for every $f\in \Fc$ by $g_{\omega}$-regularity,
we have for any $f\in \Fc$
  \begin{eqnarray*}
  \lefteqn{0\leq \sqrt{n}\int_{\BB_{n}^{J_1,J_2,J_3}} |f|\, dC  \leq \sqrt{n} C_\Fc \int_{\BB_{n}^{J_1,J_2,J_3}} \{\min_k \min(u_k,1-u_k)\}^{-\omega} \,C(d\uu)      }\\
&\leq & \sqrt{n}C_\Fc \sum_{k\in J_1\cup J_3} \int_{\BB_{n}^{J_1,J_2,J_3}} \big\{ u_k^{-\omega}+ (1-u_k)^{-\omega} \big\}\,C(d\uu)   \\
&\leq & \sqrt{n} C_\Fc\sum_{k\in J_1} \int_{\{u_k\in (0,1/2n],\uu_{-k}\in (0,1]^{d-1}\}} \big\{ \frac{C(d\uu)}{u_k^{\omega}}
+  \frac{C(d\uu)}{(1-u_k)^{\omega}} \big\} \\
&+ & \sqrt{n} C_\Fc\sum_{k\in J_3} \int_{\{u_k\in (1-1/2n,1],\uu_{-k}\in (0,1]^{d-1}\}} \big\{ \frac{C(d\uu)}{u_k^{\omega}}
+  \frac{C(d\uu)}{(1-u_k)^{\omega}} \big\} \\
&\leq & \sqrt{n} C_\Fc\sum_{k\in J_1\cup J_3} \big\{ \int_{\{u_k\in (0,1/2n]\}} u_k^{-\omega}\, du_k+ \int_{\{u_k\in (1-1/2n,1]\}} (1-u_k)^{-\omega}\, du_k \big\}\\
& \leq & 2C_\Fc|J_1\cup J_3 | (2n)^{\omega-1/2}/(1-\omega),
\end{eqnarray*}
that tends to zero with $n$ uniformly wrt $f\in \Fc$.
Therefore, we have proven that $\sup_{f\in \Fc}|r_{n1}|=o(1)$.

\mds

Moreover, invoking the integration by parts formula (40) in~\citep{radulovic2017weak}, we get
  \begin{eqnarray*}
  \lefteqn{ A_n= \int_{\BB_{n,d}} f\, d\widehat \Cb_n = (-1)^{d}\int_{\BB_{n,d}} \widehat\Cb_n(\uu-) \, f\big(d\uu\big) }\\
  &+& \sum_{ \substack{I_1+I_2+I_3= \{1,\ldots,d\} \\ I_1\neq \emptyset, I_1\neq \{1,\ldots,d\} } } (-1)^{|I_1|+|I_2|}
  \int_{\BB_{n,|I_1|}} \widehat\Cb_n(\uu_{I_1}-:\cc_{n,I_2}:\dd_{n,I_3}) \, f\big(d\uu_{I_1}:\cc_{n,I_2}:\dd_{n,I_3}\big) \\
&+& \Delta\big(\widehat\Cb_n f \big)\big( \BB_{n,d}\big)=: A_{n,1} +  A_{n,2} + r_{n2}.
  \end{eqnarray*}
In $A_{n,2}$, the `+' symbol within $I_1+I_2+I_3$ denotes the disjoint union. In other words, the summation is taken over all partitions of 
$\{1,\ldots,d\}$ into three disjoint subsets.
Moreover, we have used the usual notation $\Delta (f)((\uu,\mathbf{v}])$ that
is the sum of component-wise differentials of $f$ over all the vertices of the hypercube $(\uu,\mathbf{v}]$. For instance, in dimension two,
$$ \Delta (f)((\uu,\mathbf{v}]) = f(u_2,v_2)-f(u_1,v_2)-f(u_2,v_1)+f(u_1,v_1).$$
By assumptions~\ref{cond_reg_copula},~\ref{oscillation_modulus_assump} and~\ref{tail_empir_processes}, Theorem 4.5 in~\citep{berghaus2017weak} holds. Then, the term $A_{n,1}$ can be rewritten as
\begin{equation}
A_{n,1}= (-1)^{d}\int_{\BB_{n,d}} \frac{\widehat\Cb_n(\uu-)}{g_\omega(\uu)} \, g_\omega(\uu) f\big(d\uu\big)
 = (-1)^{d}\int_{\BB_{n,d}} \frac{\bar\Cb_n(\uu-)}{g_\omega(\uu-)} \, g_\omega(\uu) f\big(d\uu\big) + o_{P,u}(1).
 \label{Bn}
 \end{equation}
Moreover, due to Lemma 4.10 in~\citep{berghaus2017weak} and their theorem 4.5 again, this yields
$$ A_{n,1} =
(-1)^d\int_{\BB_{n,d}} \frac{\bar\Cb_n(\uu)}{g_\omega(\uu)} \, g_\omega(\uu) f\big(d\uu\big) +o_{P,u}(1)=
(-1)^d\int_{\BB_{n,d}} \bar\Cb_n\,  df +o_{P,u}(1).$$

\mds

The term $A_{n,2}$ is a finite sum of integrals as
$$  \Ic_{n,I_1,I_2,I_3}:= \int_{\BB_{n,|I_1|}} \widehat\Cb_n(\uu_{I_1}-:\cc_{n,I_2}:\dd_{n,I_3}) \, f\big(d\uu_{I_1}:\cc_{n,I_2}:\dd_{n,I_3}\big) ,$$
where $I_1$ is not empty and is not equal to the whole set $\{1,\ldots,d\}$.
By the first part of Theorem 4.5 and Lemma 4.10 in~\citep{berghaus2017weak}, we obtain
$$  \Ic_{n,I_1,I_2,I_3}:= \int_{\BB_{n,|I_1|}} \frac{\bar\Cb_n}{g_\omega}(\uu_{I_1}:\cc_{n,I_2}:\dd_{n,I_3}) \, g_\omega(\uu_{I_1}:\cc_{n,I_2}:\dd_{n,I_3}) f\big(d\uu_{I_1}:\cc_{n,I_2}:\dd_{n,I_3}\big)+o_{P,u}(1).$$
If $I_2\neq \emptyset$, any argument $(\uu_{I_1}:\cc_{n,I_2}:\dd_{n,I_3})$ of $\bar \Cb_n/g_\omega$ belongs to the subset $N(0,1/n)$. In such a case, for any $\epsilon>0$, we have
\begin{eqnarray*}
\lefteqn{ \Pb\Big(  \sup_{f\in \Fc}\big|\Ic_{n,I_1,I_2,I_3}\big| > \epsilon \Big) \leq
\Pb\Big( \sup_{\uu\in N(0,1/n)} \frac{|\bar \Cb_n(\uu)|}{g_\omega(\uu)} > \epsilon^2 \Big) }\\
  &+& \Pb\Big( \sup_{f\in \Fc}\int_{\BB_{n,|I_1|}}   g_\omega\big(\uu_{I_1}:\cc_{n,I_2}:\dd_{n,I_3}\big) \big| f\big(d\uu_{I_1}:\cc_{n,I_2}:\dd_{n,I_3}\big)\big| > 1/\epsilon \Big).
\end{eqnarray*}
The two latter terms tend to zero with $n$, for any sufficiently small $\epsilon$. 
Indeed, the first probability tends to zero with $n$ by Lemma 4.9 in~\citep{berghaus2017weak}, and the second one may be arbitrarily small by $g_{\omega}$-regularity, choosing a sufficiently small $\epsilon$.
Therefore, all the terms of $A_{n,2}$ for which $I_2\neq \emptyset$ are negligible.
Moreover, if $I_2=\emptyset$, then $I_3\neq \emptyset$. By the stochastic equicontinuity of the process $\bar\Cb_n/\tilde g_\omega$ (Lemma 4.10 in~\citep{berghaus2017weak}), we have
\begin{eqnarray}
\lefteqn{  \Ic_{n,I_1,\emptyset,I_3}=
\int_{\BB_{n,|I_1|}} \frac{ \bar\Cb_n(\uu_{I_1}:\dd_{n,I_3})}{ \tilde g_\omega(\uu_{I_1}:\dd_{n,I_3})}
g_\omega (\uu_{I_1}:\dd_{n,I_3}) \, f\big(d\uu_{I_1}:\dd_{n,I_3}\big) }\label{InI1I3} \\
&=&
\int_{\BB_{n,|I_1|}} \frac{ \bar\Cb_n(\uu_{I_1}:\1_{I_3})}{ \tilde g_\omega(\uu_{I_1}:\1_{I_3})}
g_\omega (\uu_{I_1}:\dd_{n,I_3}) \, f\big(d\uu_{I_1}:\dd_{n,I_3}\big) + o_{P,u}(1), \nonumber
\end{eqnarray}
invoking again~(\ref{bound_Cn_1}), when $I_2=\emptyset$.
Re-indexing the subsets $I_j$, check that $A_{n,2}$ yields the sum in~(\ref{key_prop_1}) plus a negligible term.

\mds

The remaining term $r_{n2}=\Delta\big(\widehat\Cb_n f \big)\big( \BB_{n,d}\big)$ is a sum of $2^d$ terms. By $g_\omega$-regularity, all these terms are smaller than a constant times $ |\widehat \Cb_n|/g_\omega$, evaluated at a $d$-vector whose components are $1/2n$ or $1-1/2n$. 
This implies these terms are equal to $ |\bar \Cb_n|/g_\omega$ with the same arguments (Th. 4.5 in~\citep{berghaus2017weak}), plus a negligible term.
Due to Lemma 4.9 in~\citep{berghaus2017weak}, all of the latter terms tend to zero in probability,  
and then $r_{n2}=o_{P,u}(1)$.
Therefore, we have proven~(\ref{key_prop_1}) and point (i).

\mds
(ii) If, in addition, the process $(\alpha_n)$ is weakly convergent, then $(\bar\Cb_n/\tilde g_\omega)$ is weakly convergent to $(\Cb/\tilde g_\omega)$ in $\big(\ell^\infty([0,1]^d),\|\cdot\|_\infty\big)$, 
by Theorem 2.2 in~\citep{berghaus2017weak}.
For a given $f\in \Fc$, define the sequence of maps $g_n:\ell^{\infty}([0,1]^d) \rightarrow \Rb$ as
  \begin{eqnarray}
  \lefteqn{ g_n(h):= (-1)^{d}\int_{\BB_{n,d}} h(\uu) g_\omega(\uu) \, f\big(d\uu\big)
    \label{decomp_gnhn} }\\
&+&
\sum_{ \substack{I \subset \{1,\ldots,d\} \\ I\neq \emptyset, I\neq \{1,\ldots,d\} } } (-1)^{|I|}
  \int_{\BB_{n,|I|}} h(\uu_I:\1_{-I}) g_\omega(\uu_{I}:\dd_{-I}) \, f\big(d\uu_{I}:\dd_{-I}\big). \nonumber
  \end{eqnarray}
If a sequence of maps $(h_n)$ tends to $h_\infty$ in $\ell^{\infty}([0,1]^d)$ and $h_\infty$ is continuous on $[0,1]^d$, let us prove that $g_n(h_n)\rightarrow g_\infty(h_\infty)$, where
\begin{eqnarray*}
\lefteqn{ g_\infty(h):= (-1)^d\int_{(0,1)^d} h(\uu) g_\omega(\uu) \, f(d\uu)    }\\
&+&
\sum_{ \substack{I \subset \{1,\ldots,d\} \\ I\neq \emptyset, I\neq \{1,\ldots,d\} } } (-1)^{|I|}
  \int_{(0,1)^{|I|}} h(\uu_I:\1_{-I}) g_\omega(\uu_{I}:\1_{-I}) \, f\big(d\uu_{I}:\1_{-I}\big).
  \end{eqnarray*}
The difference $g_n(h_n)-g_\infty(h_\infty)$ is a sum of $2^d-1$ differences between integrals that come from~(\ref{decomp_gnhn}).
The first one is managed as
\begin{eqnarray*}
\lefteqn{  \big|\int_{\BB_{n,d}} h_n(\uu) g_\omega(\uu) \, f\big(d\uu\big) - \int_{(0,1)^d} h_\infty(\uu) g_\omega(\uu) \, f(d\uu) \big|
\leq \|h_n - h_\infty \|_\infty \int_{\BB_{n,d}}  g_\omega(\uu)\, \left| f(d\uu)\right|    }\\
&+& \big| \int_{\BB_{n,d}} h_\infty(\uu) g_\omega(\uu)\, f(d\uu)- \int_{(0,1)^{d}} h_\infty(\uu) g_\omega(\uu)\, f(d\uu) \big|,\hspace{5cm}
\end{eqnarray*}
that tends to zero by~(\ref{bound_fg}) and Assumption~\ref{regularity_cond_wc}.
The other terms of $g_n(h_n)-g_\infty(h_\infty)$ are indexed by a subset $I$, and can be bounded similarly:
{\small{\begin{eqnarray*}
\lefteqn{  \big|\int_{\BB_{n,|I|}} h_n(\uu_I:\1_{-I}) g_\omega(\uu_I:\dd_{n,-I}) \, f\big(d\uu_{I}:\dd_{n,-I}\big) -
\int_{(0,1)^{|I|}} h_\infty(\uu_I:\1_{-I}) g_\omega(\uu_I:\1_{-I}) \, f(d\uu_I:\1_{-I})) \big|  }\\
&\leq & \|h_n - h_\infty \|_\infty \int_{\BB_{n,I}}  g_\omega(\uu_I:\dd_{n,-I})\, \left| f(d\uu_I:\dd_{n,-I})\right|    \\
&+& \big| \int_{\BB_{n,I}} h_\infty(\uu_I:\1_{-I}) g_\omega(\uu_{-I}:\dd_{n,-I})\, f(d\uu_I:\dd_{n,-I})
- \int_{(0,1)^{|I|}} h_\infty(\uu_I:\1_{-I}) g_\omega(\uu_I:\1_{-I})\,
f(d\uu_I:\1_{-I}) \big|,
\end{eqnarray*}}}
that tends to zero by Equation~(\ref{bound_Cn_1}) and Assumption~\ref{regularity_cond_wc}.

\mds

Therefore, apply the extended continuous mapping (Theorem 1.11.1 in~\citep{van1996weak}) to obtain the weak convergence of
$ g_n(\bar\Cb_n/\tilde g_\omega)$ (that is equal to $g_n(\bar\Cb_n/g_\omega)$ in our case) towards $g_\infty( \Cb/\tilde g_\omega)$ in $\ell^{\infty}([0,1]^d)$.
Note that almost every trajectory of $\Cb/\tilde g_\omega$ on $[0,1]^d$ is continuous.
Since $\int f\, d\widehat \Cb_n=g_n(\bar\Cb_n/\tilde g_\omega) + o_{P,u}(1)$, this proves the announced weak convergence result (ii).

\mds

(iii) Our arguments are close to those invoked to prove Theorem 1 in~\citep{radulovic2017weak}.
Our point (ii) above yields the finite-dimensional convergence of $\widehat\Cb_n$ in $\ell^{\infty}(\Fc)$.
For any (possibly random) map $X:[0,1]^d\rightarrow \Rb$ and any $f\in \Fc$, set
\begin{eqnarray*}
\lefteqn{\Gamma_\infty (X,f)}\\
& := & (-1)^d\int_{(0,1)^d} (X g_\omega)(\uu) \, f(d\uu)+
\sum_{ \substack{I \subset \{1,\ldots,d\} \\ I\neq \emptyset, I\neq \{1,\ldots,d\} } } (-1)^{|I|}
  \int_{(0,1)^{|I|}} (X g_\omega)(\uu_{I}:\1_{-I}) \, f\big(d\uu_{I}:\1_{-I}\big).
\end{eqnarray*}
Moreover, define
\begin{eqnarray*}
\lefteqn{ \Gamma_n(X,f):=
(-1)^{d}\int_{\BB_{n,d}} X(\uu) g_\omega(\uu)\,  f\big(d\uu\big) \nonumber }\\
  &+& \sum_{ \substack{I \subset \{1,\ldots,d\} \\ I\neq \emptyset, I\neq \{1,\ldots,d\} } } (-1)^{|I|}
  \int_{\BB_{n,|I|}} X(\uu_{I}:\dd_{n,-I}) g_\omega(\uu_{I}:\dd_{n,-I})\, f\big(d\uu_{I}:\dd_{n,-I}\big).
\label{def_Gamma_n}
\end{eqnarray*}
We have proved above (recall Equation~(\ref{key_prop_1}) and~(\ref{InI1I3})) that, for any $f\in \Fc$,
$$ \widehat \Cb_n(f):=\int f\, d\widehat \Cb_n = \Gamma_n\Big( \frac{\bar \Cb_n}{\tilde g_\omega},f \Big)+ o_{P,u}(1).$$
Therefore, we expect that the weak limit of $\widehat \Cb_n$ in $\ell^\infty (\Fc)$ will be
$ \Gamma_\infty\big( \Cb/\tilde g_\omega,. \big)$.
It is sufficient to prove that $\Gamma_n\big( \bar \Cb_n/\tilde g_\omega,\cdot\big)$ weakly tends to the latter process.

\mds

To this end, we slightly adapt our notations to deal with functionals defined on $\Fc$.
The weak limit of $\widehat \Cb_n/\tilde g_\omega$ on $\ell^\infty([0,1]^d)$ is the Gaussian process $\Cb / \tilde g_\omega$, that is tight (Ex. 1.5.10 in~\citep{van1996weak}).
Moreover, define the map $\widetilde\Gamma_\infty: C_0([0,1]^d,\|\cdot \|_\infty)\rightarrow \ell^\infty(\Fc)$ as
$ \widetilde \Gamma_\infty (X)(f)=\Gamma_\infty(X,f)$, where $C_0([0,1]^d,\|\cdot \|_\infty)$ denotes the set on continuous maps on $[0,1]^d$, endowed with the sup-norm.
Similarly, define $\widetilde \Gamma_n:
\ell^\infty([0,1]^d,\|\cdot \|_\infty)\rightarrow \ell^\infty(\Fc)$ as
$ \widetilde \Gamma_n (X)(f)=\Gamma_n(X,f)$. We now have to prove that
$\widetilde \Gamma_n\big( \bar \Cb_n/ \tilde g_\omega \big)$ weakly tends to $\widetilde \Gamma_\infty \big( \Cb/ \tilde g_\omega \big)$
on $\ell^\infty(\Fc)$.

\mds

\textcolor{black}{First, we prove} that $\widetilde\Gamma_\infty$ is continuous. Let $(X_n)$ be a sequence of maps in $C_0([0,1]^d,\|\cdot \|_\infty)$ that tends to $X$ is the latter space.
We want to prove that $\widetilde\Gamma_\infty (X_n)$ tends to $\widetilde\Gamma_\infty(X)$ in $\ell^\infty(\Fc)$.
The first term of $\widetilde\Gamma_\infty (X_n)- \widetilde\Gamma_\infty (X) $ that comes from the definition of $\Gamma_\infty$ is easily managed:
$$ \sup_{f\in \Fc}\big|\int_{(0,1)^d}  (X_n g_\omega)(\uu)\, f(d\uu) - \int_{(0,1)^d}  (X g_\omega)(\uu)\, f(d\uu) \big|
\leq \| X_n - X \|_\infty \sup_{f\in \Fc} \int_{(0,1)^d}  g_\omega(\uu)\, \left|f(d\uu)\right|,$$
that tends to zero because of~(\ref{bound_fg}).
The other terms are tackled similarly:
\begin{eqnarray*}
\lefteqn{
\sup_{f\in \Fc}
\big|  \int_{(0,1)^{|I|}} (X_n g_\omega)(\uu_{I}:\1_{-I}) \, f\big(d\uu_{I}:\1_{-I}\big)-
\int_{(0,1)^{|I|}} (X g_\omega)(\uu_{I}:\1_{-I}) \, f\big(d\uu_{I}:\1_{-I}\big)\big|   }\\
&\leq &\| X_n- X \|_\infty \sup_{f\in \Fc}
  \int_{(0,1)^{|I|}}  g_\omega(\uu_{I}:\1_{-I}) \, | f\big(d\uu_{I}:\1_{-I}\big) | , \hspace{5cm}
\end{eqnarray*}
that tends to zero.
We have used the fact that, due to~(\ref{bound_Cn_1}) and Assumption~\ref{regularity_cond_wc_stronger}, we have 
$$  \sup_{f\in \Fc} \int_{(0,1)^{|I|}}  g_\omega(\uu_{I}:\1_{-I}) \, | f\big(d\uu_{I}:\1_{-I}\big) | <\infty. $$
As a consequence, $\widetilde \Gamma_\infty(X_n)$ tends to $\widetilde \Gamma_\infty(X)$ in $\ell^\infty(\Fc)$.
Therefore, by continuity, the expected weak limit $\widetilde\Gamma_{\infty}( \Cb/\tilde g_\omega)$ of $\widetilde\Gamma_{\infty}( \bar\Cb_n/\tilde g_\omega)$ is tight on $\ell^\infty(\Fc)$.

\mds

Then, the weak convergence of $\Gb_n:=\widetilde \Gamma_n\big( \bar \Cb_n/\tilde g_\omega \big)$ towards $\Gb_\infty:=\widetilde\Gamma_\infty( \Cb/\tilde g_\omega)$ in $\ell^{\infty}(\Fc)$ is obtained if we prove that the bounded Lipschitz distance between the two processes tends to zero with $n$ (Th. 1.12.4 in~\citep{van1996weak}), i.e. if
\begin{equation*}
d_{BL}\big( \Gb_n,\Gb_\infty    \big)=\sup_h \big| \Eb\big[ h(\Gb_n) \big]-  \Eb\big[ h(\Gb_\infty) \big] \big| \underset{n \rightarrow \infty}{\longrightarrow} 0,
\end{equation*}
with the supremum taken over all the uniformly bounded and Lipschitz maps $h:\ell^\infty(\Fc)\rightarrow \Rb$,
$\sup_{x\in\ell^\infty(\Fc)} |h(x)|\leq 1 $ and $ |h(x)-h(y)|\leq \| x-y\|_\infty$ for all $x,y\in \ell^\infty(\Fc)$.
By the triangle inequality, we have
\begin{eqnarray*}
\lefteqn{
d_{BL}\big( \Gb_n,\Gb_\infty    \big)=
d_{BL}\bigg( \widetilde \Gamma_n\Big( \frac{\bar \Cb_n}{\tilde g_\omega} \Big),\widetilde \Gamma_\infty\Big( \frac{\Cb}{\tilde g_\omega} \Big)    \bigg)  }\\
&\leq &
d_{BL}\bigg( \widetilde \Gamma_n\Big( \frac{\bar \Cb_n}{\tilde g_\omega} \Big),\widetilde \Gamma_n\Big( \frac{ \Cb}{\tilde g_\omega} \Big)    \bigg)
+ d_{BL}\bigg( \widetilde \Gamma_n\Big( \frac{\Cb}{\tilde g_\omega} \Big),\widetilde \Gamma_\infty\Big( \frac{ \Cb}{\tilde g_\omega} \Big)    \bigg)=: d_{1,n} + d_{2,n}.
\end{eqnarray*}
To deal with $d_{1,n}\rightarrow 0$, note that
\begin{eqnarray*}
\lefteqn{
\|\widetilde \Gamma_n\big( \bar \Cb_n/\tilde g_\omega \big) - \widetilde \Gamma_n\big(  \Cb/\tilde g_\omega \big)\|_\infty =
\sup_{f\in \Fc} \big|\Gamma_n\big( \bar \Cb_n/\tilde g_\omega,f \big) - \Gamma_n\big(  \Cb/\tilde g_\omega, f \big)\big| }\\
 &\leq &
\| \frac{\bar \Cb_n}{\tilde g_\omega} - \frac{\Cb}{\tilde g_\omega} \|_\infty
\sum_{I\neq \emptyset } \sup_{f\in \Fc}  \int_{\BB_{n,|I|} } g_\omega(\uu_{I}:\dd_{n,-I}) \,\big|  f\big(d\uu_{I}:\dd_{n,-I}\big) \big| \\
 &\leq &
M \| \frac{\bar \Cb_n}{\tilde g_\omega} - \frac{\Cb}{\tilde g_\omega}\|_\infty,
\end{eqnarray*}
for some positive constant $M$, due to~(\ref{bound_Cn_1}).
This proves that $\widetilde \Gamma_n$ is Lipschitz, with a Lipschitz constant that does not depend on $n$ nor $f\in \Fc$.
Therefore, we get
$$ d_{1,n}= \sup_h \big| \Eb\big[ h\circ \widetilde \Gamma_n \big( \frac{\bar\Cb_n}{\tilde g_\omega}\big)  \big]
- \Eb\big[ h\circ \widetilde \Gamma_n \big( \frac{\Cb}{\tilde g_\omega}\big)  \big] \big|
\leq  M \, d_{BL}\Big( \frac{\bar\Cb_n}{\tilde g_\omega},\frac{\Cb}{\tilde g_\omega}\Big),$$
that tends to zero because of the weak convergence of $(\bar \Cb_n/\tilde g_\omega)$
 to $(\Cb/\tilde g_\omega)$ in $\ell^\infty(|0,1]^d)$ (Th. 4.5 in~\citep{berghaus2017weak}).

 \mds

To show that $(d_{2,n})$ tends to zero, note that, for every $I\neq \emptyset$, we have
\begin{eqnarray*}
\lefteqn{
 \big|  \int_{\BB_{n,|I|} } \Cb(\uu_{I}:\dd_{n,-I}) \, f\big(d\uu_{I}:\dd_{n,-I}\big) - \int_{ (0,1)^{|I|} } \Cb(\uu_{I}:\1_{-I}) \, f\big(d\uu_{I}:\1_{-I}\big) \big| }\\
&\leq &
 \int_{ \BB_{n,|I|}} \big|  \frac{\Cb}{g_\omega}(\uu_{I}:\dd_{n,-I}) - \frac{\Cb}{\tilde g_\omega}(\uu_{I}:\1_{-I})\big|
 \, g_\omega\big(\uu_{I}:\dd_{n,-I}\big) \, \big|f\big(d\uu_{I}:\dd_{n,-I}\big) \big|   \\
 &+&
\big| \int_{ \BB_{n,|I|} }  \frac{\Cb}{\tilde g_\omega}(\uu_{I}:\1_{-I})
 \, g_\omega\big(\uu_{I}:\dd_{n,-I}\big) f\big(d\uu_{I}:\dd_{n,-I}\big) \\
 &-& \int_{ (0,1)^d}   \frac{\Cb}{\tilde g_\omega}(\uu_{I}:\1_{-I})
 \, \tilde g_\omega\big(\uu_{I}:\1_{-I}\big) f\big(d\uu_{I}:\1_{-I}\big)  \big| =: e_{1,n}(f)+e_{2,n}(f).
\end{eqnarray*}
Clearly, $\sup_{f\in \Fc} e_{1,n}$ tends to zero, invoking~(\ref{bound_Cn_1})
and the continuity of $\Cb/\tilde g_\omega$ on $(0,1]^d$.
By Assumption~\ref{regularity_cond_wc_stronger}, $\sup_{f\in \Fc} e_{2,n}$ tends to zero a.s.
Thus, we have proved that
\begin{eqnarray*}
\lefteqn{ 
\| \widetilde \Gamma_n\Big( \frac{\Cb}{\tilde g_\omega} \Big)  - \widetilde \Gamma_{\textcolor{black}{\infty}} \Big( \frac{\Cb}{\tilde g_\omega} \Big) \|_\infty 
=\sup_{f\in \Fc} \big| \Gamma_n\Big( \frac{\Cb}{\tilde g_\omega},f \Big)  - \Gamma \Big( \frac{\Cb}{\tilde g_\omega},f \Big) \big|    }\\
& \leq &
\sum_{I\neq \emptyset} \sup_{f\in \Fc} \big|
 \int_{\BB_{n,|I|} } \Cb(\uu_{I}:\dd_{n,-I}) \, f\big(d\uu_{I}:\dd_{n,-I}\big) - \int_{ (0,1)^{|I|} } \Cb(\uu_{I}:\1_{-I}) \, f\big(d\uu_{I}:\1_{-I}\big) \big|  
\end{eqnarray*}
tends to zero for almost every trajectory and when $n\rightarrow \infty$,
Considering the bounded Lipschitz maps $h$ as in the definition of $d_{BL}$, deduce
\begin{eqnarray*}
\lefteqn{ 
 d_{2,n}=\sup_h \big| \Eb\Big[ h \circ \widetilde \Gamma_n\Big( \frac{\Cb}{\tilde g_\omega} \Big)  - h \circ \widetilde \Gamma_{\textcolor{black}{\infty}} \Big( \frac{\Cb}{\tilde g_\omega} \Big)     \Big] \big|  \leq \Eb\Big[ \sup_h \big|  h \circ \widetilde \Gamma_n\Big( \frac{\Cb}{\tilde g_\omega} \Big)  - h \circ \widetilde \Gamma_{\textcolor{black}{\infty}} \Big( \frac{\Cb}{\tilde g_\omega} \Big)  \big|   \Big]   }\\
&\leq & 
\Eb\Big[ 
\| \widetilde \Gamma_n\Big( \frac{\Cb}{\tilde g_\omega} \Big)  - \widetilde \Gamma_{\textcolor{black}{\infty}} \Big( \frac{\Cb}{\tilde g_\omega} \Big) \|_\infty\textcolor{black}{\wedge 2}    \Big]=: \Eb[V_n].\hspace{6cm}
\end{eqnarray*}
Since $\sup_{x\in\ell^\infty(\Fc)} |h(x)|\leq 1 $ for every $h$, the sequence $V_n$ is bounded by two. 
And we have proved above that $V_n$ tends to zero a.s. 
Thus, the dominated convergence theorem implies that $\Eb[V_n]\rightarrow 0$ when $n\rightarrow \infty$, i.e. $d_{2,n} \rightarrow 0$ when $n\rightarrow \infty$.

\mds

To conclude, we have proved that $d_{BL}\big( \Gb_n,\Gb_\infty    \big) \rightarrow 0$ and $n\rightarrow \infty$. Since the limit $\Gb_\infty$ is tight, we get the weak convergence of
$\Gb_n$ to $\Gb_\infty$ in $\ell^\infty(\Fc)$, i.e. the weak convergence of $\widehat \Cb_n$ indexed by $f\in \Fc$, i.e. in $\ell^\infty(\Fc)$, as announced.

\subsection{Proof of Corollary~\ref{cor_GC}}

By inspecting the proof of Theorem~\ref{Th_fondam_multiv_rank_stat} (i), it appears that it is sufficient to prove
$$ \sup_{f\in\Fc} \Big| \int_{\BB_{n,|I|}} \bar\Cb_n(\uu_{I}:\dd_{n,-I}) \, f\big(d\uu_{I}:\dd_{n,-I}\big)\Big|=O_P(\mu_n),$$
for any $I\subset \{1,\ldots,d\}$, $I\neq \emptyset$. For any constant $A>0$, we have
  \begin{eqnarray}
  \lefteqn{\Pb\Big(\sup_{f\in \Fc}\big| \int_{\BB_{n,|I|}} \bar\Cb_n(\uu_{I}:\dd_{n,-I}) \, f\big(d\uu_{I}:\dd_{n,-I}\big) \big| > A\mu_n \Big) \leq
   \Pb\Big( \sup_{\uu_I \in \BB_{n,|I|}}\frac{ |\bar\Cb_n(\uu_{I}:\dd_{n,-I}) |  }{g_\omega(\uu_I:\dd_{n,-I})} > \sqrt{A}\mu_n \Big)  } \nonumber\\
&+& \Pb\Big( \sup_{f\in \Fc} \int_{\BB_{n,|I|}} g_\omega(\uu_{I}:\dd_{n,-I}) \, \big| f\big(d\uu_{I}:\dd_{n,-I}\big) \big| > \sqrt{A} \Big).
\hspace{5cm}
\label{GC_remainder_term}
\end{eqnarray}
Check that
$\{ (\uu_I:\dd_{n,-I})\, | \,  \uu_I\in \BB_{n,|I|} \} \subset N(\delta_n,1/2)$,
for any sequence of positive numbers $(\delta_n)$, $\sup_n\delta_n < 1/2$ and $\delta_n\rightarrow 0$ with $n$. This yields
  \begin{equation*}
   \Pb\Big( \sup_{\uu_I \in \BB_{n,|I|}}\frac{ |\bar \Cb_n(\uu_{I}:\dd_{n,-I}) |}{g_\omega(\uu_{I}:\dd_{n,-I})} > \sqrt{A}\mu_n \Big)
   \leq \Pb\Big( \sup_{(\uu_I:\dd_{n,-I}) \in N(\delta_n,1/2)}\frac{ |\bar \Cb_n(\uu_{I}:\dd_{n,-I})|}{g_\omega(\uu_{I}:\dd_{n,-I})} > \sqrt{A}\mu_n \Big).
\end{equation*}
Note that $g_\omega(\uu)\geq \delta_n^\omega$ when $\uu\in N(\delta_n,1/2)$,
and set $\delta_n:=\mu_n^{-1/\omega}$.
Thus, for $n$ sufficiently large, $\delta_n <1/2$ and we get
{\small{\begin{equation*}
\Pb\Big( \sup_{(\uu_I:\dd_{n,-I}) \in N(\delta_n,1/2)}\frac{ |\bar \Cb_n(\uu_{I}:\dd_{n,-I})|}{g_\omega(\uu_{I}:\dd_{n,-I})} > \sqrt{A}\mu_n \Big)
\leq \Pb\Big( \sup_{(\uu_I:\dd_{n,-I}) \in N(\delta_n,1/2)} |\bar \Cb_n(\uu_{I}:\dd_{n,-I})| > \sqrt{A} \Big). 
\end{equation*}}}
Since $\sup_{\uu\in [0,1]^d}|\alpha_n(\uu)|=O_P(1)$, then $\sup_{\uu\in [0,1]^d}|\bar\Cb_n(\uu)|=O_P(1)$ too because all partial derivatives $\dot C_j(\uu)$, $j\in \{1,\ldots,d\}$, belong to $[0,1]$ (Th. 2.2.7 in~\citep{nelsen2006introduction}).
Therefore, the first term on the r.h.s. of~(\ref{GC_remainder_term}) tends to zero. Finally, the second term on the r.h.s. of~(\ref{GC_remainder_term}) may be arbitrarily
small with a large $A$, due to~(\ref{bound_Cn_1}), proving the result.

\section{Additional proofs}\label{proof_results_sparsity}

\subsection{Proof of Theorem \ref{bound_proba}}

We denote $\nu_n = \ln(\ln n) n^{-1/2}+a_{n}$ and we would like to prove that, for any $\eps>0$, there exists $L_{\eps} > 0$ such that, for any $n$, we have
\begin{equation} \label{objective_bound_proba}
\Pb\Big(\|\widehat{\theta}-\theta_0\|_2/\nu_n \geq L_{\eps}\Big)  < \eps.
\end{equation}
Now, following the reasoning of Fan and Li (2001), Theorem 1, and denoting the penalized loss $\Lb^{\text{pen}}_n(\theta;\widehat{\Uc}_n) = \Lb_n(\theta;\widehat{\Uc}_n) + n\sum^p_{k=1}\pp(\lambda_n,|\theta_{k}|)$, we have
\begin{equation}
\Pb\Big(\|\widehat{\theta}-\theta_0\|_2/\nu_n \geq L_{\eps}\Big) \leq \Pb\Big(\exists \mathbf{v} \in \Rb^{p},\|\mathbf{v}\|_2=L'_{\eps}\geq L_\eps: \Lb^{\text{pen}}_n(\theta_0+\nu_n \mathbf{v};\widehat{\Uc}_n) \leq \Lb^{\text{pen}}_n(\theta_0;\widehat{\Uc}_n)\Big),
\label{ineg1_FanLi}
\end{equation}
and we can always impose $L'_\eps=L_\eps$, our choice hereafter.
If the r.h.s. of~(\ref{ineg1_FanLi}) is smaller than $\eps$,
there is a local minimum in the ball $\big\{\theta_0+\nu_n \mathbf{v}, \|\mathbf{v}\|_2 \leq L_{\eps}\big\}$ with a probability larger than $1-\eps$.
In other words,~(\ref{objective_bound_proba}) is satisfied and $\|\widehat{\theta}-\theta_0\|_2 = O_p(\nu_n)$. Now, by a Taylor expansion of the penalized loss function around the true parameter, we get
\begin{eqnarray*}
\lefteqn{\Lb^{\text{pen}}_n(\theta_0+\nu_n\mathbf{v};\widehat{\Uc}_n)-\Lb^{\text{pen}}_n(\theta_0;\widehat{\Uc}_n)}\\
& \geq & \nu_n \mathbf{v}^\top \nabla_{\theta} \Lb_n(\theta_0;\widehat{\Uc}_n) + \frac{\nu^2_n}{2} \mathbf{v}^\top \nabla^2_{\theta \theta^\top} \Lb_n(\theta_0;\widehat{\Uc}_n)\mathbf{v}
+  \frac{\nu^3_n}{6} \nabla_\theta \left\{ \mathbf{v}^\top \nabla^2_{\theta \theta^\top} \Lb_n(\overline{\theta};\widehat{\Uc}_n)\mathbf{v} \right\} \mathbf{v}
\\
&+&   n\overset{}{\underset{k \in \Ac}{\sum}}\left\{\pp(\lambda_n,|\theta_{0,k}+\nu_n v_k|)-\pp(\lambda_n,|\theta_{0,k}|)\right\}=:\sum_{j=1}^4 T_j,
\end{eqnarray*}
for some parameter $\bar\theta$ such that $\|\overline{\theta}-\theta_0\|_2 \leq L_{\eps}\nu_n$.
Note that we have used $\pp(\lambda_n,0)=0$ and the positiveness of the penalty.
Thus, it is sufficient to prove there exists $L_{\eps}$ such that
\begin{equation}
\Pb\left(\exists \mathbf{v} \in \Rb^{p}, \|\mathbf{v}\|_2=L_{\eps}:
T_1+\ldots+T_4 \leq 0\right) < \eps. \label{bound_obj_double}
\end{equation}
Let us deal with the non-penalized quantities. First, for any $k\in \{1,\ldots,p\}$, we have
\begin{equation*}
\partial_{\theta_k}\Lb_n(\theta_0;\widehat{\Uc}_n) =
n\int_{(0,1]^d}\partial_{\theta_k}\ell(\theta_0;\uu) \text{d}\big(C_n(\uu)-C(\uu)\big),
\end{equation*}
due to the first-order conditions.
For any $\theta_0 \in \Theta$,
we have assumed that the family of maps
$\Fc_0$ is $g_\omega$-regular. Moreover, Assumption~\ref{oscillation_modulus_assump} and the compacity of $[0,1]^d$ implies
$\|\alpha_n\|_\infty=O_P(1)$.
Then, we can apply Corollary~\ref{cor_GC}, that yields
$$ \sup_{k=1,\ldots,p}\big| \partial_{\theta_k} \Lb_n(\theta_0;\widehat{\Uc}_n) \big|=
n\sup_{k=1,\ldots,p}\big| \int \partial_{\theta_k} \ell(\theta_0;\uu)\, (C_n-C)(d\uu) \big| = O_P(\ln(\ln n)\sqrt{n}).$$
By Cauchy-Schwarz, we deduce
\begin{equation*}
|T_1|\textcolor{black}{=} \nu_n\big|\mathbf{v}^\top \nabla_{\theta} \Lb_n(\theta_0;\widehat{\Uc}_n)\big| \leq \nu_n \|\mathbf{v}\|_2 \|\nabla_{\theta} \Lb_n(\theta_0;\widehat{\Uc}_n)\|_2 =
 O_p\big(\nu_n\sqrt{n p} \ln(\ln n) \big)\|\mathbf{v}\|_2.
\end{equation*}

The empirical Hessian matrix can be expanded as
{\small{\begin{equation*}
n^{-1}\mathbf{v}^\top \nabla^2_{\theta \theta^\top} \Lb_n(\theta_0;\widehat{\Uc}_n)\mathbf{v} =
\int_{(0,1]^d} \mathbf{v}^\top\nabla_{\theta\theta^\top} \ell(\theta_0;\uu)\mathbf{v}\,C_n(d\uu)
=  \sum_{k,l=1}^p v_k v_l
\int_{(0,1]^d} \partial^2_{\theta_k\theta_l} \ell(\theta_0;\uu)\,C_n(d\uu).
\end{equation*}}}

We have assumed that the maps $ \uu \mapsto \partial^2_{\theta_k \theta_l} \ell(\theta_0;\uu) $, $(k,l)\in \{1,\ldots,p\}^2$,
belong the a $g_\omega$-regular family. Therefore, applying Corollary~\ref{cor_GC}, we get
{\small{\begin{eqnarray*}
\lefteqn{\int_{(0,1]^d} \partial^2_{\theta_k\theta_l} \ell(\theta_0;\uu)\,C_n(d\uu) = \int_{(0,1]^d} \partial^2_{\theta_k\theta_l} \ell(\theta_0;\uu)\,C(d\uu)
+ \int_{(0,1]^d} \partial^2_{\theta_k\theta_l} \ell(\theta_0;\uu)\,(C_n-C)(d\uu) }\\
&=& \int_{(0,1]^d} \partial^2_{\theta_k\theta_l} \ell(\theta_0;\uu)\,C(d\uu) + O_P(\ln(\ln n)/\sqrt{n}).\hspace{5cm}
\end{eqnarray*}}}
As a consequence, since $\| \mathbf{v} \|_1^2\leq p \| \mathbf{v} \|_2^2$, this yields
\begin{equation*}
T_2=\frac{n\nu^2_n}{2} \mathbf{v}^\top\Eb\big[ \nabla_{\theta \theta^\top}^2 \ell(\theta_0;\UU) \big]\mathbf{v} + O_P\big(p\|\mathbf{v}\|_2^2 \nu_n^2\ln(\ln n)\sqrt{n}\big),
\end{equation*}
that is positive by assumption, when $n$ is sufficiently large and for a probability arbitrarily close to one.
By similar arguments with the family of maps $\Fc_3$, we get
\begin{equation*}
T_3=  n\frac{\nu^3_n}{6} \nabla_\theta \big\{ \mathbf{v}^\top \nabla^2_{\theta \theta^\top} \Eb\big[ \ell(\overline{\theta};\UU)\big]\mathbf{v} \big\} \mathbf{v}
+ O_P\big(p^{3/2}\|\mathbf{v}\|_2^3\nu_n^3\ln(\ln n)\sqrt{n}\big)=O_P(n p^{3/2}\|\mathbf{v}\|_2^3\nu_n^3).
\end{equation*}
Let us now treat the penalty part as in~\citep{fan2004nonconcave} (proof of Theorem 1, equations (5.5) and (5.6)).
By using exactly the same method, we obtain
\begin{equation*}
 |T_4|\leq \sqrt{|\Ac|} n\nu_n a_n \|\mathbf{v}\|_2 + 2 n b_n \nu_n^2 \|\mathbf{v}\|_2^2,
\end{equation*}
and the latter term is dominated by $T_2$, allowing $\| \mathbf{v}\|$ to be large enough.
Thus, for such $\mathbf{v}$, we have
$$ \sum_{j=1}^4 T_j = \frac{n\nu^2_n}{2} \mathbf{v}^\top\Eb\big[ \nabla_{\theta \theta^\top}^2 \ell(\theta_0;\UU) \big]\mathbf{v} \big(1+o_P(1)\big).$$
Since the latter dominant term is larger than
$n L^2_{\eps}\lambda_{\min}(\Hb)\nu^2_n/2>0$ for $n$ large enough, where $\lambda_{\min}(\Hb)$ denotes the smallest eigenvalue of $\Hb$, we deduce~(\ref{bound_obj_double}) and finally $\|\widehat{\theta}-\theta_0\|_2=O_p(\nu_n)$.

\subsection{Proof of Theorem \ref{oracle_property}}

\emph{Point (i):} The proof is performed in the same spirit as in Fan and Li (2001).
Consider an estimator $\widehat\theta := (\widehat{\theta}^\top_{\Ac},\widehat{\theta}^\top_{\Ac^c})^\top$ of $\theta_0$ such that $\|\widehat\theta - \theta_0\|_2=O_P(\nu_n)$, as in Theorem \ref{bound_proba}, with
$\nu_n:=n^{-1/2}\ln(\ln n)+a_n$. Using our notations for vector concatenation, as detailed in Appendix~\ref{technicalities}, the support recovery property holds asymptotically if
\begin{equation}
\Lb^{\text{pen}}_n(\widehat{\theta}_{\Ac}:\mathbf{0}_{\Ac^c};\widehat{\Uc}_n) = \underset{\|\theta_{\Ac^c}\|_2\leq C \nu_n}{\min} \Lb^{\text{pen}}_n(\widehat{\theta}_{\Ac}:\theta_{\Ac^c};\widehat{\Uc}_n),
\label{CS_support_recovery}
\end{equation}
for any constant $C>0$ with a probability that tends to one with $n$. Set $\eps_n := C \nu_n$. To prove~(\ref{CS_support_recovery}), it is sufficient to show that, for any $\theta\in \Theta$ such that $\|\theta-\theta_0\|\leq \eps_n$, we have
with a probability that tends to one
\begin{equation}
\label{sign_consistency}
\partial_{\theta_j} \Lb^{\text{pen}}_n(\theta;\widehat{\Uc}_n)>0 \;\; \text{when} \;\; 0 < \theta_j < \eps_n; \;
\partial_{\theta_j} \Lb^{\text{pen}}_n(\theta;\widehat{\Uc}_n)<0 \;\; \text{when} \;\; -\eps_n < \theta_j < 0,
\end{equation}
for any $j \in \Ac^c$. By a Taylor expansion of the partial derivative of the penalized loss around $\theta_0$, we obtain
\begin{eqnarray*}
\lefteqn{\partial_{\theta_j} \Lb^{\text{pen}}_n(\theta;\widehat{\Uc}_n) = \partial_{\theta_j} \Lb_n(\theta;\widehat{\Uc}_n) + n \partial_{2}\pp(\lambda_n,|\theta_j|)\text{sgn}(\theta_j)}\\
& = & \partial_{\theta_j} \Lb_n(\theta_0;\widehat{\Uc}_n) + \overset{p}{\underset{l=1}{\sum}} \partial^2_{\theta_j\theta_l} \Lb_n(\theta_0;\widehat{\Uc}_n) (\theta_l-\theta_{0,l})+  
\frac{1}{2}\overset{p}{\underset{l,m=1}{\sum}} \partial^3_{\theta_j\theta_l\theta_m} \Lb_n(\overline{\theta};\widehat{\Uc}_n) (\theta_l-\theta_{0,l})(\theta_m-\theta_{0,m}) \\
& +&  n \partial_{2}\pp(\lambda_n,|\theta_j|)\text{sgn}(\theta_j),
\end{eqnarray*}
for some $\overline{\theta}$ that satisfies $\|\overline{\theta}-\theta_0\|_2 \leq \eps_n$.
The family of maps $\Fc$ is $g_{\omega}$-regular and $\|\alpha_n\|_{\infty}=O_p(1)$. As a consequence, by Corollary~\ref{cor_GC},
\begin{equation*}
\big|\partial_{\theta_j} \Lb_n(\theta_0;\widehat{\Uc}_n)\big| = O_p\big(\ln(\ln n) \sqrt{n}\big) .
\end{equation*}
As for the second order term, the maps $\uu \mapsto \partial^2_{\theta_i \theta_l} \ell(\theta_0;\uu)$ 
are $g_{\omega}$-regular by assumption, for any $(i,l) \in \Ac^c \times \{1,\ldots,p\}$. Then, by Corollary~\ref{cor_GC}, we deduce
\begin{eqnarray*}
\frac{1}{n}\partial^2_{\theta_j\theta_l} \Lb_n(\theta_0;\widehat{\Uc}_n) = \int_{(0,1]^d} \partial^2_{\theta_j\theta_l}\ell(\theta_0;\uu)C_n(d\uu) =  \Eb[\partial^2_{\theta_j\theta_l}\ell(\theta_0;\UU)] + O_p(\ln(\ln n)/\sqrt{n}).
\end{eqnarray*}
Finally, for the remaining third order term, since the family of maps $\uu \mapsto \partial^3_{\theta_j\theta_l \theta_m}\ell(\overline{\theta};\uu)$ is $g_{\omega}$-regular by assumption, Corollary~\ref{cor_GC} yields
\begin{equation*}
\frac{1}{n}\partial^3_{\theta_j\theta_l\theta_m} \Lb_n(\overline{\theta};\widehat{\Uc}_n) = \Eb[\partial^3_{\theta_j\theta_l\theta_m} \ell(\overline{\theta};\UU)] + O_p\big(\ln(\ln n)/\sqrt{n}\big),
\end{equation*}
that is bounded in probability by Assumption~\ref{regularity_assumption}.
Hence putting the pieces together and using the Cauchy-Schwarz inequality, we get
\begin{eqnarray}
\lefteqn{\partial_{\theta_j} \Lb^{\text{pen}}_n(\theta;\widehat{\Uc}_n)}\nonumber\\
&=&  O_p\big(\ln(\ln n)\sqrt{n}+ n\|\theta - \theta_0\|_1 + n\|\theta - \theta_0\|^2_1  \big)
+ n \partial_{2}\pp(\lambda_n,|\theta_j|)\text{sgn}(\theta_j)    \nonumber \\
&=&  O_p\big(\ln(\ln n)\sqrt{n}  + n\sqrt{p}\nu_n + np\nu_n^2    \big)
+ n \partial_{2}\pp(\lambda_n,|\theta_j|)\text{sgn}(\theta_j) \label{bound_oracle} \\
&=& n \lambda_n\Big\{O_p\big(\ln(\ln n)/(\sqrt{n}\lambda_n) + a_n/\lambda_n\big)  + \lambda^{-1}_n \partial_{2}\pp(\lambda_n,|\theta_j|)\text{sgn}(\theta_j)\Big\}.
\nonumber
\end{eqnarray}
Under the assumptions $\underset{n \rightarrow \infty}{\lim \, \inf} \; \underset{\theta \rightarrow 0^+}{\lim \, \inf} \, \lambda^{-1}_n \partial_2\pp(\lambda_n,\theta) > 0$,
$a_n=o(\lambda_n)$ and $\sqrt{n}\lambda_n/\ln(\ln n)$ tends to the infinity, the sign of the derivative is determined by the sign of $\theta_j$. As a consequence, (\ref{sign_consistency}) is satisfied, implying our assertion (i). 
\textcolor{black}{Indeed, all zero components of $\theta_0$ will be estimated as zero with a high probability. And its non-zero components will be consistently estimated. Then, the probability that all the latter estimates will not be zero tends to one, when $n\rightarrow \infty$.} 

\noindent\emph{Point (ii):} We have proved that $\underset{n \rightarrow \infty}{\lim}\; \Pb(\widehat{\Ac}=\Ac)=1$. Therefore, for any $\epsilon>0$,
the event $\widehat{\theta}_{\Ac^c}=\mathbf{0}$ in $\Rb^{|\Ac^c|}$ occurs with a probability larger than $1-\epsilon$ for $n$ large enough.
Since we want to state a convergence in law result, we can consider that the latter event is always satisfied.
By a Taylor expansion around the true parameter, the orthogonality conditions yield
\begin{eqnarray*}
\lefteqn{0=\nabla_{\theta_{\Ac}} \Lb^{\text{pen}}_n(\widehat\theta_{\Ac}:\mathbf{0}_{\Ac^c};\widehat{\Uc}_n) }\\
& = &  \nabla_{\theta_{\Ac}} \Lb_n(\theta_0;\widehat{\Uc}_n) +  \nabla^2_{\theta_{\Ac}\theta^\top_{\Ac}} \Lb_n(\theta_0;\widehat{\Uc}_n) (\widehat\theta-\theta_{0})_{\Ac}+   \frac{1}{2}\nabla_{\theta_{\Ac}}\Big\{(\widehat\theta-\theta_{0})^\top_{\Ac} \nabla^2_{\theta_{\Ac}\theta^\top_{\Ac}} \Lb_n(\overline{\theta};\widehat{\Uc}_n) \Big\}(\widehat\theta-\theta_{0})_{\Ac} \\
& + &  n \mathbf{A}_n(\theta_0) + n\Big\{ \mathbf{B}_n(\theta_0) + o_p(1) \Big\}(\widehat\theta-\theta_{0})_{\Ac},
\end{eqnarray*}
where $\mathbf{A}_n(\theta) = \big[\partial_{2}\pp(\lambda_n,|\theta_k|)\text{sgn}(\theta_k)\big]_{k \in \Ac}$ and
$\mathbf{B}_n(\theta) = \big[\partial^2_{\theta_k\theta_l}\pp(\lambda_n,|\theta_k|)\big]_{(k,l) \in \Ac^2}$, which simplifies into a diagonal matrix since the penalty function is coordinate-separable. 
Obviously, $\overline{\theta}$ is a random parameter such that $\|\overline{\theta}_{\Ac} - \theta_{0,\Ac} \|_2 < \|\widehat{\theta}_{\Ac} - \theta_{0,\Ac} \|_2$.  
Rearranging the terms and multiplying by $\sqrt{n}$, we deduce
\begin{eqnarray*}
\lefteqn{\sqrt{n}\Kb_n(\theta_0)\Big\{\big(\widehat{\theta}-\theta_0\big)_{\Ac} + \mathbf{A}_n(\theta_0)\Big\}}\\
& = & - \frac{1}{\sqrt{n}}\nabla_{\theta_{\Ac}} \Lb_n(\theta_0;\widehat{\Uc}_n) -  \frac{1}{2\sqrt{n}}\nabla_{\theta_{\Ac}}\Big\{(\widehat\theta-\theta_{0})^\top_{\Ac} \nabla^2_{\theta_{\Ac}\theta^\top_{\Ac}} \Lb_n(\overline{\theta};\widehat{\Uc}_n) \Big\}(\widehat\theta-\theta_{0})_{\Ac}+o_p(1) \\
& := & T_1+T_2+o_p(1),
\end{eqnarray*}
where $\Kb_n(\theta_0) = n^{-1}\nabla^2_{\theta_{\Ac}\theta^\top_{\Ac}} \Lb_n(\theta_0;\widehat{\Uc}_n)+\mathbf{B}_n(\theta_0)$.
First, under the $g_{\omega}$-regularity conditions and by Corollary~\ref{cor_GC}, we have $n^{-1}\nabla^2_{\theta_{\Ac}\theta^\top_{\Ac}} \Lb_n(\theta_0;\widehat{\Uc}_n) = \Hb_{\Ac\Ac^\top} + o_p(1)$. Second, the third order term $T_2$ can be managed as follows:
\begin{equation*}
T_2 = -  \frac{1}{2\sqrt{n}}\nabla_{\theta_{\Ac}}\Big\{(\widehat\theta-\theta_{0})^\top_{\Ac} \nabla^2_{\theta_{\Ac}\theta^\top_{\Ac}} \Lb_n(\overline{\theta};\widehat{\Uc}_n) \Big\}(\widehat\theta-\theta_{0})_{\Ac},
\end{equation*}
is a vector as size $|\Ac|$ whose $j$-th component is
$$ T_{2,j}:=n^{-1/2}\underset{l,m \in \Ac}{\sum}\partial^3_{\theta_j \theta_l\theta_m} \Lb_n(\overline{\theta};\widehat{\Uc}_n) (\widehat\theta_l-\theta_{0,l})(\widehat\theta_m-\theta_{0,m}), $$
for any $j \in \Ac$.
Invoking Corollary~\ref{cor_GC} and Assumption~\ref{regularity_assumption}, $T_{2,j}=O_P\big( \sqrt{n} \nu_n^2\big)$ for any $j$.
Then, since $a_n=o(\lambda_n)$, we obtain
$$ T_2 = O_P\big( \ln(\ln n)^2 n^{-1/2} + \sqrt{n} \lambda_n^2   \big)=o(1).$$
Regarding the gradient in $T_1$, since $\partial_{\theta_j}\ell(\theta_0;\uu)$ belongs to our $g_{\omega}$-regular family $\Fc$ for any $j \in \Ac$, apply Theorem~\ref{Th_fondam_multiv_rank_stat} (ii): 
for any $j \in \Ac$, we have
\begin{eqnarray}
\lefteqn{n^{-1/2}\partial_{\theta_j} \Lb_n(\theta_0;\widehat{\Uc}_n) = \sqrt{n}\int_{(0,1]^d} \partial_{\theta_j}\ell(\theta_0;\uu)
(C_n-C)(d\uu)} \label{tight_limit}\\
& \overset{d}{\underset{n \rightarrow \infty}{\longrightarrow}} & (-1)^d\int_{(0,1]^d} \Cb(\uu) \, \partial_{\theta_j}\ell(\theta_0;d\uu)+
\sum_{ \substack{I \subset \{1,\ldots,d\} \\ I\neq \emptyset, I\neq \{1,\ldots,d\} } } (-1)^{|I|}
  \int_{(0,1]^{|I|}} \Cb(\uu_{I}:\1_{-I}) \, \partial_{\theta_j}\ell(\theta_0,d\uu_{I}:\1_{-I}).\nonumber
  \end{eqnarray}
We then conclude by Slutsky's Theorem to deduce the asymptotic distribution
\begin{eqnarray*}
\sqrt{n}\Big[\Hb_{\Ac\Ac}+\mathbf{B}_n(\theta_0)\Big]\Big\{\big(\widehat{\theta}-\theta_0\big)_{\Ac} + \Big[\Hb_{\Ac\Ac}+\mathbf{B}_n(\theta_0)\Big]^{-1}\mathbf{A}_n(\theta_0)\Big\}
& \overset{d}{\underset{n \rightarrow \infty}{\longrightarrow}} & \mathbf{W},
\end{eqnarray*}
where $\mathbf{W}$ the $|\mathcal{A}|$-dimensional random vector defined in (\ref{tight_limit}).

\begin{remark}\label{alternative_proof_AN}
\textcolor{black}{It would be possible to state Theorem~\ref{oracle_property}-(ii) under other sets of regularity conditions. Indeed, the latter result mainly requires a CLT (given by our Theorem A.1 (ii)) and a ULLN (given by our Corollary A.2). 
The latter one may be obtained with a condition on the bracketing numbers associated with the family of maps 
$\Fc_\delta:=\{ \uu\in [0,1]^d\mapsto \partial_{\theta_k}\ell(\theta;\uu),\; \|\theta -\theta_0\|<\delta, k=1,\ldots,p\},$
for some (small) $\delta>0$: see Remark~\ref{rem_ULLN_bis} at the end of the main \textcolor{black}{text}. This would provide an alternative way of managing the term $T_2$.
To deal with $T_1$, a CLT for 
$\sqrt{n}\int_{(0,1]^d} \nabla_{\theta}\ell(\theta_0;\uu)d\big\{C_n(\uu)-C(\uu)\big\}$ 
can be obtained under some regularity conditions on $\uu\mapsto \nabla_{\theta}\ell(\theta_0;\uu)$ and its derivative w.r.t. $\uu$, as introduced in~\citep{ruymgaart1974} and~\citep{ruymgaart1972}. 
See~\citep{tsukahara2005}, Prop. 3 and Assumption (A.1), to be more specific. At the opposite, the proof of Theorem~\ref{oracle_property}-(i) (support recovery) requires Corollary~\ref{cor_GC} and not only a usual ULLN. Indeed, an upper bound for the rate of convergence to zero of $\|\widehat\theta_n-\theta_0\|$ is here required to manage the penalty functions and sparsity.}
\end{remark}

\section{Regularity conditions for Gaussian copulas}\label{verif_regul_Gaussian_cop}

Let us verify that the Gaussian copula family fulfills all regularity conditions that are required to apply Theorems \ref{bound_proba} and~\ref{oracle_property}.
Here, the loss function is $\ell(\theta;\UU)=\ln |\Sigma| + \ZZ^\top \Sigma^{-1} \ZZ,$ where 
$ \ZZ:=\big( \Phi^{-1}(U_1),\ldots,\Phi^{-1}(U_d) \big)^\top $.
Since the true underlying copula is Gaussian, the random vector 
$\ZZ$ in $\Rb^d$ is Gaussian $\Nc(0,\Sigma)$. 
The vector of parameters is $\theta=vech(\Sigma)$, whose ``true value'' will be $\theta_0=vech(\Sigma_0)$. 
Note that $\ell(\theta;\UU)$, as a function of $\UU$, is a quadratic form w.r.t. $\ZZ$, and that 
\begin{equation}
\sup_{j,k\in \{1,\ldots,d\}} \Eb\Big[ \big| \Phi^{-1}(U_j)  \Phi^{-1}(U_k)\big| \Big] <\infty.
\label{Gaussian_trick}
\end{equation}
Assumption~\ref{regularity_assumption}:
when $\Sigma_0$ is invertible, $\Hb$ and $\Mb$ are positive definite. This is exactly the same situation as the 
Hessian matrix associated with the (usual) MLE for the centered Gaussian random vector $\ZZ$.
When $\theta$ belongs to a small neighborhood of $\theta_0$, the associated correlation $\Sigma$ is still invertible by continuity. 
Then, the third order partial derivatives of the loss are uniformly bounded in expectation in such a neighborhood, due to~(\ref{Gaussian_trick}).

\mds 

The first part of \textcolor{black}{Assumption~\ref{cond_reg_copula}} is satisfied for Gaussian copulas, as noticed in~\citep{segers2012asymptotics}, Example 5.1. Checking~(\ref{cond_2nd_der_cop}) is more complex. This is the topic of Lemma~\ref{cond_derivative_gaussian_cop} below.
    
\mds 

\textcolor{black}{Let us verify Assumption~\ref{assumption_regularity} for the Gaussian loss defined as $\ell(\theta;\uu) = \ln |\Sigma| + \text{tr}(\zz\zz^\top\Sigma^{-1})$, with $\zz := (\Phi^{-1}(u_1),\ldots,\Phi^{-1}(u_d))^\top$}: every member $f$ of the family $\Fc$ has
continuous and integrable partial derivatives on $(0,1)^d$ and then is $BHKV_{loc}\big((0,1)^d \big)$.
Moreover, any $f\in \Fc$
can be written as a quadratic form w.r.t. $\zz$:
$$ f(\uu)= \sum_{k,l=1}^d  \nu_{k,l} z_k z_l = \sum_{k,l=1}^d  \nu_{k,l} \Phi^{-1}(u_k) \Phi^{-1}(u_l),\;\; \uu\in (0,1)^d. $$
Note that, for every $u\in (0,1)$, $|\Phi^{-1}(u)| \leq (u(1-u))^{-\eps}$ for any $\eps>0$ and $ \min (u,1-u)\leq u(1-u)/2$.
Thus, for every $\omega>0$, we clearly have 
$$ \sup_{\uu\in (0,1)^d} \min_k \min (u_k, 1-u_k)^\omega  |f(\uu)| <\infty,$$
for every $f\in \Fc$, proving the first condition for the $g_\omega$ regularity of $\Fc$. 
To check~(\ref{bound_fg}), note that $f(d\uu)$ is zero when $d>2$. Otherwise, when $d=2$,  
$f(d\uu) \propto \, du_1 \, du_2/ \big\{ \phi(z_1) \phi(z_2)\big\}$. 
Thus, to apply our theoretical results, it is sufficient to check Assumption \ref{assumption_regularity} by replacing $g_\omega(\uu)$ by 
$g_\omega(u_1,u_2)$, as if the loss and its derivatives were some functions of $(u_1,u_2)$ only, instead of $\uu$.
See the remark after the definition of the $g_\omega$ regularity too.
Since $1/\phi\circ \Phi^{-1}(u) = O\big( 1/u(1-u) \big)$ (c.f.~(\ref{ineg_utile}) 
in the proof of Lemma~\ref{cond_derivative_gaussian_cop}), we obtain
\begin{eqnarray*}
\lefteqn{
 \int_{(0,1)^2} \frac{ g_\omega(u_1,u_2)}{|\phi(z_1) \phi(z_2) |} \, du_1 \, du_2 \leq 
 M\int_{(0,1)^2} 
\frac{ g_\omega(u_1,u_2)}{u_1(1-u_1) u_2 (1-u_2)} \, du_1 \, du_2   }\\
&=& M \Big\{ \int_{(0,1)} 
\frac{ \min(u,1-u)^{\omega/2}}{u(1-u) } \, du \Big\}^2 <\infty,\hspace{7cm}
\end{eqnarray*}
for some constant $M$, yielding~(\ref{bound_fg}).
Note that we have used the inequality 
$ \min (u_1,u_2)^\omega\leq u_1^{\omega/2}u_2^{\omega/2}$.

\mds 

Now consider~(\ref{bound_Cn_1}). 
We restrict ourselves to the case $|J_1|=1$, because we are interested in the situation for which $J_2\cup J_3 \neq \emptyset$. 
Again, when the cardinality of $J_1$ is larger than two, the latter condition is satisfied 
because $f(d\uu_{J_1}: \cc_{n,J_2}:\dd_{n,J_3})=0$. 
When $J_1$ is a singleton, say $J_1=\{1\}$, the absolute value of 
$$ \Jc_{J_2,J_3}:=\int_{\BB_{n,|J_1|}} g_\omega(\uu_{J_1}:\cc_{n,J_2}:\dd_{n,J_3}\big) f\big(d\uu_{J_1}:\cc_{n,J_2}:\dd_{n,J_3}\big) $$ 
is smaller than a constant times 
$$ 
\int_{(1/2n,1-1/2n]} g_\omega(u_1:\cc_{n,J_2}:\dd_{n,J_3}\big) 
\frac{|\Phi^{-1}(u_1)| +  |\Phi^{-1}(1/2n)|}{\phi \circ \Phi^{-1}(u_1)} \,  du_1.$$
We have used the identity $\Phi^{-1}(1-1/2n) = -\Phi^{-1}(1/2n)$.
Note that $g_\omega(u_1:\cc_{n,J_2}:\dd_{n,J_3}\big) = (1/2n)^\omega$ when $u_1\in (1/2n,1-1/2n]$.
Using $\Phi^{-1}(u)/ \phi \circ \Phi^{-1}(u)=O\big(1/ u(1-u)\big)$, this yields 
\begin{eqnarray*}
\lefteqn{
\big| \Jc_{J_2,J_3} \big| \leq 
\frac{K_0}{n^\omega} \int_{1/2n}^{1-1/2n} \frac{du}{u(1-u)} + 
\frac{K_0 |\Phi^{-1}(1/2n)|}{n^\omega}\int_{1/2n}^{1-1/2n} \frac{du}{\phi\circ \Phi^{-1}(u)}   }\\
&\leq & \frac{2 K_0}{n^\omega} \ln (2n) + \frac{K_0 |\Phi^{-1}(1/2n)|^2}{n^\omega}, \hspace{6cm}
\end{eqnarray*} 
for some constant $K_0$. The latter r.h.s. tends to zero, because $\Phi^{-1}(1/2n) \sim - \sqrt{2\ln (2n)}$ (\citep{abramowitz1972handbook}, 26.2.23).
This reasoning can be led for every map $f\in  \Fc$. 
This proves~(\ref{bound_Cn_1}). Importantly, we have proved that all integrals as $\Jc_{J_2,J_3}$ tend to zero with $n$. As a consequence, the limiting law in Theorem~\ref{oracle_property} is simply $ \WW := (-1)^d\int_{(0,1]^d} \Cb(\uu) \, \nabla_{\theta}\ell(\theta_0;d\uu)$.

\mds

Assumption~\ref{regularity_cond_wc} is a direct consequence of the dominated convergence theorem and the upper bounds that have 
been exhibited just before.
\begin{remark}
Alternatively to the Gaussian loss function, consider the least squares loss 
$$\normalfont\ell_{\text{LS}}(\theta;\uu) := \|\zz\zz^\top-\Sigma\|^2_F=\text{tr}\big( (\zz\zz^\top-\Sigma)^2 \big).$$
Then, it is easy to check our regularity assumptions as above.
In particular, every member $f$ of $\Fc$ can be written as a quadratic form w.r.t. $\zz$, as for the previous log-likelihood loss. 
Then the same techniques apply. 
\end{remark}

\begin{remark}
For completeness, let us provide the gradient of the \textcolor{black}{Gaussian and least squares} losses w.r.t. the parameter $\theta$. 
By the identification of the gradient following \citep{abadir2005}, the derivatives \textcolor{black}{of the Gaussian and least squares functions are, respectively, given by} 
\begin{equation*}
\normalfont\nabla_{\text{vec}(\Sigma)} \ell(\theta;\uu) = \text{vec}(\Sigma^{-1} - \Sigma^{-1} \zz \zz^\top \Sigma^{-1}), \;\text{and}\; 
\textcolor{black}{\nabla_{\text{vec}(\Sigma)} \ell_{\text{LS}}(\theta;\uu) = -2 \text{vec}(\zz \zz^\top -\Sigma).}
\end{equation*}
\textcolor{black}{The Gaussian and least squares based Hessian matrices respectively follow as}
\begin{equation*}
\normalfont\nabla^2_{\text{vec}(\Sigma) \text{vec}(\Sigma)^\top} \ell(\theta;\uu) = -\big(\Sigma^{-1} \otimes \Sigma^{-1}\big) + \big(\Sigma^{-1}\zz \zz^\top\Sigma^{-1} \otimes \Sigma^{-1}\big) + \big( \Sigma^{-1} \otimes \Sigma^{-1}\zz \zz^\top\Sigma^{-1} \big),
\end{equation*}
\textcolor{black}{and $\normalfont\nabla^2_{\text{vec}(\Sigma) \text{vec}(\Sigma)^\top} \ell_{\text{LS}}(\theta;\uu) =2\big(I_p \otimes I_p\big)$.}
\end{remark}

\begin{lemma}\label{cond_derivative_gaussian_cop}
Let $C_\Sigma$ be a $d$-dimensional Gaussian copula. Then, there exists a constant $M_{d,\Sigma}$ s.t., for every $j_1,j_2$ in $\{1,\ldots, d\}$ and every $\uu\in V_{j_1}\cap V_{j_2}$, 
$$ \big| \frac{\partial^2 C_{\Sigma}}{\partial u_{j_1} \partial u_{j_2}} (\uu)\big|  \leq 
M_{d,\Sigma}\min \Big\{  \frac{1}{u_{j_1}(1-u_{j_1})},\frac{1}{u_{j_2}(1-u_{j_2})}     \Big\}.$$
\end{lemma}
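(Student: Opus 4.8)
The plan is to reduce everything to Gaussian computations via the representation $C_\Sigma(\uu)=\Phi_\Sigma\big(\Phi^{-1}(u_1),\ldots,\Phi^{-1}(u_d)\big)$, where $\Phi_\Sigma$ is the c.d.f. of a centred Gaussian vector $\ZZ\sim\Nc(0,\Sigma)$ and $\Sigma$ is the (invertible) correlation matrix. One may assume all $u_k\in(0,1)$: if some $u_k=0$ then $C_\Sigma(\uu)=0$ along with all its derivatives, while $u_k=1$ merely drops the $k$-th coordinate and reduces the problem to a Gaussian copula in smaller dimension. Writing $z_k:=\Phi^{-1}(u_k)$ and using $\mathrm{d}z_k/\mathrm{d}u_k=1/\phi(z_k)$, a first differentiation identifies $\partial C_\Sigma/\partial u_{j_1}(\uu)$ with the conditional c.d.f. $\Pb\big(Z_k\leq z_k,\,k\neq j_1\mid Z_{j_1}=z_{j_1}\big)$, which is the c.d.f. of a non-degenerate $(d-1)$-dimensional Gaussian law.

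For $j_1\neq j_2$, differentiating this conditional c.d.f. once more in $u_{j_2}$, i.e.\ in the upper limit $z_{j_2}$, and using the elementary identity ``$\partial_z(\text{c.d.f.})=(\text{marginal density})\times(\text{conditional c.d.f.})$'' together with $\partial z_{j_2}/\partial u_{j_2}=1/\phi(z_{j_2})$, bounds $|\partial^2 C_\Sigma/\partial u_{j_1}\partial u_{j_2}|$ by $g(z_{j_2})/\phi(z_{j_2})$, where $g$ is the density of the conditional law $\Nc\big(\rho z_{j_1},1-\rho^2\big)$ of $Z_{j_2}$ given $Z_{j_1}=z_{j_1}$, with $\rho:=\Sigma_{j_1,j_2}\in(-1,1)$. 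Completing the square in the exponent (the case $\rho=0$ being trivial) then yields
$$ \frac{g(z_{j_2})}{\phi(z_{j_2})}=\frac{1}{\sqrt{1-\rho^2}}\exp\!\Big(\frac{z_{j_1}^2}{2}-\frac{\rho^2\,(z_{j_2}-z_{j_1}/\rho)^2}{2(1-\rho^2)}\Big)\leq \frac{1}{\sqrt{2\pi(1-\rho^2)}\,\phi(z_{j_1})}. $$
At this point I would invoke the standard Mills-ratio estimate $\phi\big(\Phi^{-1}(u)\big)\geq c\,u(1-u)$ for a universal $c>0$ — the same bound used in the verification of Assumptions~\ref{assumption_regularity} and~\ref{regularity_cond_wc} for the Gaussian copula — to conclude $|\partial^2 C_\Sigma/\partial u_{j_1}\partial u_{j_2}|\leq C_{d,\Sigma}/\{u_{j_1}(1-u_{j_1})\}$. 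Since mixed second-order partial derivatives commute, running the same argument with the two indices interchanged gives the analogous bound with $u_{j_2}(1-u_{j_2})$ in the denominator; taking the minimum of the two, and letting $M_{d,\Sigma}$ be the largest of the resulting constants over the finitely many pairs, delivers the claim for $j_1\neq j_2$.

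The case $j_1=j_2$ is treated separately but in the same spirit: writing the conditional c.d.f. above as $G\big(\zz_{-j_1}-\Sigma_{-j_1,j_1}z_{j_1}\big)$, with $G$ the (smooth) c.d.f. of $\Nc(0,\Sigma^{\mathrm{cond}})$ and $\Sigma^{\mathrm{cond}}:=\Sigma_{-j_1,-j_1}-\Sigma_{-j_1,j_1}\Sigma_{j_1,-j_1}$ positive definite, differentiation in $u_{j_1}$ together with the bound $|\partial_k G|\leq (2\pi\Sigma^{\mathrm{cond}}_{kk})^{-1/2}$ on the partial derivatives of $G$ gives $|\partial^2 C_\Sigma/\partial u_{j_1}^2|\leq\kappa_{d,\Sigma}/\phi(z_{j_1})$ for some constant $\kappa_{d,\Sigma}$, and $\phi(\Phi^{-1}(u_{j_1}))\geq c\,u_{j_1}(1-u_{j_1})$ concludes. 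The key structural input is simply that each first derivative of a Gaussian c.d.f. is again, up to the probit reparametrisation, a conditional Gaussian c.d.f.; there is no deep obstacle, and the only genuinely fiddly point is the Gaussian bookkeeping — checking that the exponential factor in $z_{j_2}$ (resp.\ $z_{j_1}$) is exactly absorbed by $1/\phi$, and that all constants depend only on $d$ and $\Sigma$ — together with isolating the slightly different (and easier) argument needed when $j_1=j_2$.
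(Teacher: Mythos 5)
Your proof is correct, and it rests on the same three ingredients as the paper's: the probit representation $C_\Sigma(\uu)=\Phi_\Sigma(\Phi^{-1}(u_1),\ldots,\Phi^{-1}(u_d))$, a completion of the square that lets the exploding factor $1/\phi(z_{j_2})$ (resp.\ $1/\phi(z_{j_1})$) be absorbed by the conditional Gaussian density, and the Mills-ratio estimate $\phi\circ\Phi^{-1}(u)\geq c\,u(1-u)$. The organization, however, is genuinely different and arguably cleaner in two places. First, by phrasing $\partial_{u_{j_1}}C_\Sigma$ as the conditional c.d.f. $\Pb(Z_k\leq z_k,\,k\neq j_1\mid Z_{j_1}=z_{j_1})$ and bounding the trailing conditional c.d.f. by $1$, you obtain the mixed-partial bound directly in any dimension $d\geq 2$; the paper instead proves the bivariate case by an explicit density computation and then reduces $d>2$ to it by dropping the constraints on the remaining coordinates — the same monotonicity trick, applied at a different stage. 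Second, for $j_1=j_2$ you differentiate the identity $\partial_{z_{j_1}}\Phi_\Sigma(\zz)=\phi(z_{j_1})\,G(\zz_{-j_1}-\Sigma_{-j_1,j_1}z_{j_1})$ directly, so the term $z_{j_1}\partial_{z_{j_1}}\Phi_\Sigma$ arising from $\phi'(z)=-z\phi(z)$ cancels automatically and you land on the bound $\kappa_{d,\Sigma}/\phi(z_{j_1})$ with no $|z_{j_1}|$ factor; the paper's decomposition into $T_1+T_2$ (and $V_1+V_2$ for $d>2$) keeps that factor and must additionally invoke the boundedness of $x\mapsto x\Phi(x)(1-\Phi(x))/\phi(x)$. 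Your intermediate bound is thus slightly sharper and needs only the weaker Mills estimate. Two small points to make explicit: the argument uses the positive definiteness of $\Sigma$ (so that $|\rho|<1$ and $\Sigma^{\mathrm{cond}}$ is non-degenerate), which is implicit in the paper throughout; and the constants $(2\pi\Sigma^{\mathrm{cond}}_{kk})^{-1/2}$ and $1/\sqrt{1-\rho^2}$ depend on the pair $(j_1,j_2)$, so, as you note, $M_{d,\Sigma}$ must be taken as a maximum over the finitely many pairs.
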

Obviously, the constant $M_{d,\Sigma}$ depends on the dimension $d$ and on the correlation matrix $\Sigma$.
We prove the latter property below. In dimension two, it had been announced in~\citep{segers2012asymptotics} and~\citep{berghaus2017weak}, but without providing the corresponding (non trivial) technical details. 

\begin{proof}
First assume that $d=2$ and $(j_1,j_2)=(1,2)$. 
Note that the random vector $(X_1,X_2):=\big( \Phi^{-1}(U_1),\Phi^{-1}(U_2) \big)$ is Gaussian $\Nc(\0,\Sigma)$.
The extra diagonal coefficient of $\Sigma$ is $\theta$.
Moreover, $C_\Sigma(u,v)=\Phi_{\Sigma}(x,y)$, where $\Phi_\Sigma$ is the bivariate cdf of $(X_1,X_2)$, $x:=\Phi^{-1}(u)$ and 
$y:=\Phi^{-1}(v)$. With obvious notations, simple calculations provide
$$ \partial^2_{1,2} C_\Sigma(u,v)= \frac{ \partial^2_{1,2} \Phi_\Sigma(x,y) }{\phi(x)\phi(y)}   
= \frac{1}{2\pi \sqrt{ 1-\theta^2}} \exp\Big(-\frac{1}{2(1-\theta^2)} \big\{ \theta^2 x^2 + \theta^2 y^2 - 2\theta xy  \big\}\Big).$$
Since $\theta^2 x^2 + \theta^2 y^2 - 2\theta xy  = (\theta x-y)^2 + (\theta^2 -1)y^2$, we deduce
$$ \big|    \partial^2_{1,2} C_\Sigma(u,v)  \big| \leq  \frac{1}{2\pi \sqrt{1-\theta^2}} \exp(y^2/2).$$
But there exists a constant $M$ such that 
\begin{equation}
 \frac{1}{\phi(y)} \leq \frac{M}{v(1-v)}, \; v\in (0,1)\cdot
\label{ineg_utile}
\end{equation}
Indeed, when $v$ tends to zero, $\Phi^{-1}(1-v)= -\Phi^{-1}(v) \sim 
\sqrt{\ln (1/v^2)}$ (\citep{abramowitz1972handbook}, 26.2.23). Then, the map 
$v\mapsto v(1-v) /\phi\circ \Phi^{-1}(v)$ is bounded. 
As a consequence, $  \partial^2_{1,2} C_\Sigma(u,v)  =O\big( 1/v(1-v) \big)$. 
Similarly, by symmetry, $  \partial^2_{1,2} C_\Sigma(u,v)  =O\big( 1/u(1-u) \big)$, proving the announced inequality for crossed partial derivatives in the bivariate case.

\mds 

Second, assume that $d=2$ and $j_1=j_2=1$. 
We the same notations as above, simple calculations provide
$$ \partial^2_{1,1} C_\Sigma(u,v)= \frac{ x\partial_{1} \Phi_\Sigma(x,y) }{\phi(x)^2} 
+ \frac{ \partial^2_{1,1} \Phi_\Sigma(x,y) }{\phi(x)^2} =: T_1 + T_2.$$
Note that
\begin{eqnarray*}
\lefteqn{ \partial_{1} \Phi_\Sigma(x,y)= 
\int_{-\infty}^y
\frac{1}{2\pi \sqrt{1-\theta^2}} \exp\Big(-\frac{1}{2(1-\theta^2)} \big\{ x^2 + t^2 - 2\theta x t  \big\}\Big) \, dt   }\\
&=&
\int_{-\infty}^y
\frac{1}{2\pi \sqrt{1-\theta^2}} \exp\Big(-\frac{1}{2(1-\theta^2)} \big\{ (t-\theta x)^2 + (1-\theta^2)x^2 \big\}\Big) \, dt
\leq \frac{\phi(x)}{\sqrt{2\pi }},
\end{eqnarray*}
implying 
\begin{equation}
 |T_1| \leq \frac{|x|}{\phi(x)\sqrt{2\pi } }  = O\Big(\frac{1}{u(1-u)}\Big).
 \label{order_T1}
\end{equation}
Indeed, it is easy to check that the map $x\mapsto x\Phi(x) \big(1-\Phi(x) \big)/\phi(x)$ is bounded, because 
$\Phi(x)\sim \phi(x)/|x|$ (resp. $1-\Phi(x)\sim \phi(x)/|x|$) when $x \rightarrow -\infty$ (resp. $x \rightarrow +\infty$).
Moreover, 
\begin{eqnarray*}
\lefteqn{ 
\partial^2_{1,1} \Phi_\Sigma(x,y)= 
\int_{-\infty}^y
\frac{(-1)}{2\pi \sqrt{1-\theta^2}} \exp\Big(-\frac{1}{2(1-\theta^2)} \big\{ x^2 + t^2 - 2\theta x t  \big\}\Big) 
\Big\{ \frac{x-t\theta }{1-\theta^2} \Big\}\, dt   }\\
&=&
\int_{-\infty}^y
\frac{1}{2\pi \sqrt{1-\theta^2}} \exp\Big(-\frac{1}{2(1-\theta^2)} \big\{ (t-\theta x)^2 + (1-\theta^2)x^2 \big\}\Big) 
\Big\{ \frac{x-t\theta }{1-\theta^2} \Big\} \, dt.
\end{eqnarray*}
Then, after an integration w.r.t. $t$, we get 
$$ \big| \partial^2_{1,1} \Phi_\Sigma(x,y) \big| \leq M_1 \big(  |x| + 1\big) \phi(x), $$
for some constant $M_1$. This yields 
\begin{equation}
 T_2=O\Big(  \frac{|x| + 1}{\phi(x)} \Big) =  O\Big(\frac{1}{u(1-u)}\Big).
\label{order_T2}
\end{equation}
Globally,~(\ref{order_T1}) and~(\ref{order_T2}) imply the announced result in this case. 

\mds

Third, assume $d>2$ and choose the indices $(j_1,j_2)=(1,2)$, w.l.o.g.
By the definition of partial derivatives, we have
\begin{eqnarray*}
\lefteqn{ \partial^2_{1,2} C_\Sigma(\uu)= 
\lim_{|\Delta u_1|+|\Delta u_2|\rightarrow 0} \frac{1}{\Delta u_1 \Delta u_2} 
\Pb\Big( U_1\in [u_1,u_1+\Delta u_1], U_2\in [u_2,u_2+\Delta u_2], \UU_{3:d} \leq \uu_{3:d}    \Big)
 }\\
&\leq & 
\lim_{|\Delta u_1|+|\Delta u_2|\rightarrow 0} \frac{1}{\Delta u_1 \Delta u_2} 
\Pb\Big( U_1\in [u_1,u_1+\Delta u_1], U_2\in [u_2,u_2+\Delta u_2] \Big)   \\
&=& \partial^2_{1,2} C_{\Sigma_{1,2}}(u_1,u_2)\leq M_{2,\Sigma_{1,2}}
\min \Big\{  \frac{1}{u_{1}(1-u_{1})},\frac{1}{u_{2}(1-u_{2})}     \Big\},\hspace{3cm}
\end{eqnarray*}
applying the previously proved result in dimension $2$. Thus, the latter inequality is extended for any dimension $d$, at least concerning 
crossed partial derivatives.

\mds 

Concerning the second-order derivative of $C_\Sigma$ w.r.t. $u_1$ (to fix the ideas) and when $d>2$, we can mimic the bivariate case. 
For any $\uu\in (0,1)^d$, set $\xx=(x_1,\ldots,x_d)$ with $x_j:=\Phi^{-1}(u_j)$, $j\in \{1,\ldots,d\}$. We obtain
$$ \partial^2_{1,1} C_\Sigma(\uu)= \frac{ x_1\partial_{1} \Phi_\Sigma(\xx) }{\phi(x_1)^2} 
+ \frac{ \partial^2_{1,1} \Phi_\Sigma(\xx) }{\phi(x_1)^2} =: V_1 + V_2.$$
Note that, with $X_j:=\Phi^{-1}(U_j)$ for every $j$, we have
\begin{eqnarray}
\lefteqn{ 
\partial_{1} \Phi_\Sigma(\xx) =  \lim_{\Delta x_1\rightarrow 0}\frac{1}{\Delta x_1} 
\Pb\Big(X_1\in [x_1,x_1+\Delta x_1], \XX_{2:d}\leq \xx_{2:d}\Big) }\nonumber\\
&\leq & 
\lim_{\Delta x_1\rightarrow 0}\frac{1}{\Delta x_1} 
\Pb\Big(X_1\in [x_1,x_1+\Delta x_1]\Big)=\phi(x_1).
\label{upper_bound_partial1}
\end{eqnarray}
The latter upper bound does not depend on $\Sigma$.
Therefore, we get
\begin{equation}
    |V_1|\leq \frac{|x_1|}{\phi(x_1)} \leq \frac{M_3}{u_1(1-u_1)},
\label{order_V1}
\end{equation}
for some constant $M_3$.
Moreover, for some constants $a\in \Rb$ and $\bbb \in \Rb^{d-1}$ that depend on $\Sigma$ only, we have
\begin{equation}
\partial^2_{1,1} \Phi_\Sigma(\xx)= 
\frac{1}{(2\pi)^{d/2} \sqrt{|\Sigma|}} \int_{\Rb^{d-1}} \1( \ttt\leq  \xx_{2:d}) 
\exp\Big(-\frac{1}{2}[x_1,\ttt]^\top\Sigma^{-1} [x_1,\ttt] \Big) (a x_1+\ttt^\top \bbb) \, d\ttt.
\label{partial_der_11_dimd}
\end{equation}
 It can be proved that there exist some constants $M_4$ and $M_5$ s.t.     
\begin{equation}
\big| \partial^2_{1,1} \Phi_\Sigma(\xx) \big| \leq  
\frac{M_4|x_1|+M_5}{(2\pi)^{d/2} \sqrt{|\Sigma|}} \int_{\Rb^{d-1}} \1( \ttt\leq  \xx_{2:d}) 
\exp\Big(-\frac{1}{2}[x_1,\ttt]^\top\Sigma^{-1} [x_1,\ttt] \Big)  \, d\ttt,
\label{ineg_partial22C}
\end{equation} 
where $\partial_{1} \Phi_{\Sigma}(\xx)$ appears 
on the r.h.s. of~(\ref{ineg_partial22C}). 
Indeed, the multiplicative factors $t_j$, $j\in \{2,\ldots,d\}$, inside the integral sum in~(\ref{partial_der_11_dimd})
do not prevent the use of the same change of variable trick that had been used 
in the bivariate case for the treatment of $T_1$ above. Therefore, after $d-1$ integrations, 
the $x_1$-function that would remain is the same as for $\partial_{1} \Phi_{\Sigma}(\xx)$, apart from a multiplicative factor.
Since 
the latter quantity is $O\big(\phi(x_1)\big)$ due to Equation~(\ref{upper_bound_partial1}), this yields 
\begin{equation}
    |V_2| = O\big( \frac{|x_1|+1}{\phi(x_1)} \Big)=O\Big( \frac{1}{u_1(1-u_1)}\Big).
\label{order_V2}
\end{equation}
Thus,~(\ref{order_V1}) and~(\ref{order_V2}) provide the result when $d>2$, for the second-order derivatives of $C_\Sigma$ w.r.t. $u_1$. 
\end{proof}

Now, consider the case of a mixture of Gaussian copulas, i.e. the true underlying copula density is 
$$ c_\theta(\uu)= \sum_{k=1}^q \pi_k c_{k}(\uu),\;\; \uu\in (0,1)^d,$$
where $\sum_{k=1}^q \pi_k=1$, $\pi_k\in [0,1]$ and $c_k$ is a Gaussian copula density with a correlation matrix $\Sigma_k$, $k\in \{1,\ldots,d\}$.
Here, $\theta$ is the concatenation of the $q-1$ first weights and the unknown parameters of every Gaussian copula. 
The latter ones are given by the lower triangular parts of the correlations matrices $\Sigma_k$. 
Assume the true weights are strictly positive.  
The $m$-order partial derivatives of the loss function $\ell(\uu):=-\ln c_\theta(\uu)$ w.r.t. $\theta$ and/or w.r.t. its arguments $u_1,\ldots,u_d$ are linear combinations of maps as 
\begin{equation}
 \prod_{j=1}^r \partial^{\nu_j} c_{i_j}(\uu) / c^r_\theta(\uu) ,   
 \label{key_Gaussian_mixture}
\end{equation}
where $i_j\in \{1,\ldots,d\}$ for every $j$, $\sum_{j=1}^r \nu_j=m$ and $r\leq m$. Here, the derivatives of order $\nu_j$ have to be understood w.r.t. the corresponding parameters and/or the arguments of the copula $c_{i_j}$, possibly multiple times. 
The latter derivatives may be written as 
$$ \partial^{\nu_j} c_{i_j}(\uu) =  c_{i_j}(\uu) Q_{j}(z_1,\ldots,z_d),$$
for some polynomials $Q_{j}$ of the variables $z_k:=\Phi^{-1}(u_k)$.
When all the weights are positive, every map given by~(\ref{key_Gaussian_mixture}) with be (in absolute value) smaller than 
$\prod_{j=1}^r Q_{j}(\zz)$ times a constant, that is itself a polynomial in terms of $\zz$'s components. Checking our Assumption 3, the single problematic one, is then reduced to checking this assumption for polynomials of $z_1,\ldots,z_d$. This task is easy and details are left to the reader. 
To conclude, the penalized CML method can be used with mixtures of Gaussian copulas, at least when the underlying true weights are strictly positive. In practice, penalization should then be restricted to the copulas parameters.

\section{Regularity conditions for Gumbel copulas}
\label{verif_regul_Gumbel_cop}

We now verify that the Gumbel copula family fulfills all regularity conditions that are required to apply Theorems \ref{bound_proba} and~\ref{oracle_property} when the loss is chosen as the opposite of the log-copula density.
A $d$-dimensional Gumbel copula is defined by $C_\theta(\uu):=\psi_\theta\big( \sum_{j=1}^d \psi_\theta^{-1}(u_j)\big)$ where $\psi_\theta(t)=\exp(-t^{1/\theta})$, $t\in \Rb^+$, for some parameter $\theta>1$.
Note that $\psi_\theta^{-1}(u)=(-\ln u)^\theta$, $u\in (0,1]$.
The associated density is
$$ c_\theta(\uu):=\frac{\psi_\theta^{(d)}\big(\sum_{j=1}^d \psi_\theta^{-1}(u_j)   \big)}{\prod_{j=1}^d \psi_\theta'\circ \psi_\theta^{-1}(u_j)},$$
and the considered loss will be $\ell(\theta;\uu)=-\ln c_\theta(\uu)$.
Simple calculations show that $\psi^{(d)}(t)=(-1)^d\psi_\theta(t)Q_{\theta}(t)$ for every $t$, where
$$Q_\theta(t):=\sum_{k=1}^d a_{k,\theta} t^{k/\theta - d},$$
for some coefficients $a_{k,\theta}$ that depend on $\theta$.
Since $\psi_\theta'\circ \psi^{-1}_\theta (u)=-u (-\ln u)^{1-\theta}/\theta $, deduce
\begin{eqnarray*}
\lefteqn{\ell(\theta;\uu)+d\ln \theta=\Big\{ \sum_{j=1}^d (-\ln u_j)^\theta \Big\}^{1/\theta} - \ln Q_\theta\Big( \big(-\ln C_\theta(\uu) \big)^\theta \Big)      }\\
&+& \sum_{j=1}^d \Big\{ \ln u_j + (1-\theta)\ln \big( \ln(1/u_j) \big)   \Big\}=:\ell_{1}(\theta;\uu)-\ell_{2}(\theta;\uu)+\ell_{3}(\theta;\uu).
\end{eqnarray*}

\mds

Assumption \ref{regularity_assumption} is satisfied because all the derivatives of $\theta\mapsto \ell(\theta;\UU)$ are nonzero and given by some polynomial maps of $\theta$, of the quantities $\ln(U_j)$ and $\ln (-\ln U_j)$, $j\in \{1,\ldots,d\}$, or by the logarithm of such maps. The latter maps are clearly integrable, even uniformly w.r.t. $\theta$ in a small neighborhood of $\theta_0$. This may be seen by doing the $d$ changes of variables $-\ln u_j=:x_j$, $j\in \{1,\ldots,d\}$ in the corresponding integrals.

\mds

To state Assumption \ref{cond_reg_copula} and w.l.o.g., let us focus on the cross-derivatives of the true copula w.r.t. its first two components. Note that
$$ \partial^2_{1,2} C_\theta(\uu)=\frac{\psi^{''}\big(\sum_{j=1}^d \psi_\theta^{-1}(u_j)  \big)}{\psi'\circ \psi(u_1)\psi'\circ \psi(u_2)}
=\frac{\psi(t)\big\{ t^{2/\theta-2}- (1-\theta)t^{1/\theta-2}\big\}}{u_1u_2(-\ln u_1)^{1-\theta}(-\ln u_2)^{1-\theta}},$$
setting $t_\theta(\uu):=\sum_{j=1}^d (-\ln u_j)^{\theta} \in \Rb^+$, denoted also by $t$ when there is no ambiguity.
Since $(-\ln u_k)^\theta \leq t_\theta(\uu)$ for every $k$ and $\theta > 1$, we have $\big(-\ln \min (u_1,u_2)\big)^\theta\leq t_\theta(\uu).$
Then, we deduce
{\small{\begin{equation*}
0\leq
\frac{\big\{t^{2/\theta-2}+ t^{1/\theta-2}\big\}}{(-\ln u_1)^{1-\theta}(-\ln u_2)^{1-\theta}}
\leq \Big\{ \big(-\ln \min(u_1,u_2)\big)^{2- 2\theta}+ \big(-\ln \min(u_1,u_2)\big)^{1-2\theta} \Big\} (-\ln \min(u_1,u_2))^{2\theta-2}.
\end{equation*}}}
Since $\psi(t)=C_\theta(\uu)\leq \min_j u_j$,
this easily yields
$$ |\partial_{1,2} C_\theta(\uu)| \leq
\frac{(\theta+1) (\min_j u_j)}{u_1 u_2} \Big\{  1+ \big(-\ln \min(u_1,u_2)\big)^{-1} \Big\} = O\Big( \min \big\{ \frac{1}{u_1(1-u_1)};\frac{1}{u_2(1-u_2)}   \big\} \Big).$$

To check Assumption \ref{assumption_regularity} (the $g_\omega$ regularity of the partial derivatives of the loss function), it is sufficient to verify this assumption for every term $\ell_{k}(\theta;\uu)$, $k\in \{1,2,3\}$.

\mds

{\bf Study of $\ell_{1}(\theta;\UU)$ and Assumption \ref{assumption_regularity}:}
by simple calculations, we get
$$ \partial_\theta \ell_{1}(\theta;\uu) = \frac{(-1)}{\theta^2} \Big\{ \sum_{j=1}^d (-\ln u_j)^\theta \Big\}^{1/\theta}
\ln\Big( \sum_{j=1}^d (-\ln u_j)^\theta \Big)
\Big\{ \sum_{j=1}^d (-\ln u_j)^\theta \ln(-\ln u_j) \Big\}.$$
Let us prove that $\Fc_1=\big\{\partial_\theta \ell_{1}(\theta_0;\cdot) \big\}$ is $g_\omega$-regular for any $\omega >0$. For convenience, $\theta_0$ will simply be denoted as $\theta$ hereafter.

\mds

Since $u \mapsto (\ln u)^\alpha \ln(-\ln u)^\beta u^\gamma$ is bounded on $(0,1)$, for any triplet $(\alpha,\beta,\gamma)$ of positive numbers,
the map $\uu\mapsto \min_k \min(u_k,1-u_k)^\omega |\partial_\theta \ell_{1}(\theta;\uu)|$ is bounded on $(0,1)$ for any positive $\omega$.

\mds

To verify (\ref{bound_fg}), it is necessary to calculate
$$\partial_\theta \ell_{1}(\theta;d\uu) = \partial_\theta \partial^d_{u_1,\ldots,u_d} \ell_{1}(\theta;\uu) d\uu .$$
By tedious calculations, it can be shown that
$$\partial_\theta \partial^d_{u_1,\ldots,u_d} \ell_{1}(\theta;\uu) =
A_d(\theta;\uu)\Big\{ \sum_{j=1}^d (-\ln u_j)^\theta \Big\}^{1/\theta-d}\prod_{j=1}^d \frac{(-\ln u_j)^{\theta-1}}{u_j} ,$$
for some map $A_d(\theta;\cdot)$ that is a power function of the quantities $(-\ln u_j)^\theta$, $\ln(-\ln u_j)$, $j\in \{1,\ldots,d\}$, and $\ln \big( \sum_{j=1}^d (-\ln u_j)^\theta \big)$.
Therefore, by symmetry, we have
\begin{eqnarray*}
\lefteqn{ \int_{(0,1]^d} g_\omega(\uu) |\ell_1(\theta;d\uu)| =
 d!\int_{0< u_1\leq u_2\leq \cdots \leq u_d\leq 1} g_\omega(\uu) |\ell_1(\theta;d\uu)| }\\
&= &  \int_{0< u_1\leq u_2\leq \cdots \leq u_d\leq 1} \min(u_1,1-u_2)^\omega |A_d(\theta;\uu)|\Big\{ \sum_{j=1}^d (-\ln u_j)^\theta \Big\}^{1/\theta-d}\prod_{j=1}^d \frac{(-\ln u_j)^{\theta-1}}{u_j}\,d\uu.
\end{eqnarray*}
For any $\uu\in (0,1]^d$ s.t. $u_1\leq u_2\leq \cdots \leq u_d$,
$ |A_d(\theta;\uu)| \leq \widetilde A_1(\theta;u_1),$
for some map $\widetilde A_1(\theta;u_1)$ that is a power function of the quantities $(-\ln u_1)^\theta$ and $\ln(-\ln u_1)$.
Therefore, after $d-2$ integrations w.r.t. $u_d,\ldots,u_3$ successively, we get
\begin{eqnarray*}
\lefteqn{ \int_{(0,1]^d} g_\omega(\uu) |\ell_1(\theta;d\uu)| \leq
K_\theta
\int_{0< u_1\leq u_2 \leq 1} \min(u_1,1-u_2)^\omega \widetilde A_1(\theta;u_1)  }\\
&\times   & \Big\{  (-\ln u_1)^\theta + (-\ln u_2)^\theta \Big\}^{1/\theta-2}\prod_{j=1}^2 \frac{(-\ln u_j)^{\theta-1}}{u_j}\,du_1\, du_2,
\end{eqnarray*}
for some constant $K_\theta$.
Note that $\min(u_1,1-u_2)^\omega \leq u_1^\omega$.
After another integration w.r.t. $u_2$, there exists another constant $K'_\theta$ s.t.
$$ \int_{(0,1]^d} g_\omega(\uu) |\ell_1(\theta;d\uu)| \leq K'_\theta
\int_{0< u_1 \leq 1} u_1^\omega \widetilde A_1(\theta;u_1)  (-\ln u_1)^{1-\theta} \frac{(-\ln u_1)^{\theta-1}}{u_1}\,du_1 <+\infty .$$
This means (\ref{bound_fg}) is satisfied for $\partial_\theta\ell_1(\theta_0;\cdot)$.

\mds

The same technique can be applied to check (\ref{bound_Cn_1}), assuming $J_2\cup J_3\neq \emptyset$.
Set $m_k:=\text{Card}(J_k)$, $k\in \{1,2,3\}$.
For every $\uu \in \BB_{n,|J_1|}$, $J_1\neq \emptyset$, we have
$$ g_\omega(\uu_{J_1}:\cc_{n,J_2}:\dd_{n,J_3}\big) = 1/(2n)^{\omega},$$
in every case, except when $J_2=\emptyset$ and $m_1\geq 2$ simultaneously.
W.l.o.g., let us assume that the components indexed by $J_1$ are the first ones, i.e. are $u_1,\ldots,u_{m_1}$.
Note that
\begin{eqnarray}
\lefteqn{
 \int_{\BB_{n,|J_1|}} g_\omega(\uu_{J_1}:\cc_{n,J_2}:\dd_{n,J_3}\big) \Big|\partial_\theta \ell_1\big(\theta;d\uu_{J_1}:\cc_{n,J_2}:\dd_{n,J_3}\big)  \Big|}\nonumber \\
 &=&  \int_{\BB_{n,|J_1|}} g_\omega(\uu_{J_1}:\cc_{n,J_2}:\dd_{n,J_3}\big)
|\bar A_d(\theta;\uu_{J_1};n)|\prod_{j=1}^{m_1}\frac{(-\ln u_j)^{\theta-1}}{u_j}
 \nonumber \\
&\times & \Big\{ \sum_{j=1}^{m_1} (-\ln u_j)^\theta
+ m_2 \ln (2n)^\theta + m_3 \ln \big(\frac{2n}{2n-1}\big)^\theta \Big\}^{1/\theta-m_1}\, d\uu,
\label{A2_ell1_decomp}
\end{eqnarray}
for some map $\bar A_d(\theta;\uu_{J_1};n)$ defined on $(0,1]^{m_1}$ that is a power function of the quantities $\ln n$, $(-\ln u_k)^\theta$, $\ln(-\ln u_k)$, $k\in J_1$, and $\ln \big( \sum_{j\in J_1} (-\ln u_j)^\theta \big)$.

\mds

Let us first deal with the case $J_2=\emptyset$ and $m_1\geq 2$. Here,
$$ g_\omega(\uu_{J_1}:\cc_{n,J_2}:\dd_{n,J_3}\big) =\min(u_1,1-u_2)^\omega,$$
when $u_1\leq u_2\leq \ldots \leq u_{m_1}$.
The reasoning is then exactly the same as for checking (\ref{bound_fg}), starting from~(\ref{A2_ell1_decomp}), noting that $m_2=0$ and by doing $m_1$ integrations on
$(1/(2n);1-1/(2n)]$ instead of $(0,1]$. In the calculation of the latter integrals, the single difference w.r.t. (\ref{bound_fg}) comes from the terms $\sum_{j=1}^k (-\ln u_j)^\theta+ \ln(2n/(2n-1))^\theta$ instead of $\sum_{j=1}^k (-\ln u_j)^\theta$. 
This will not change the conclusion 
and we have stated (\ref{bound_Cn_1}).

\mds 

Incidentally, Assumption~\ref{regularity_cond_wc} is easily checked by applying the dominated convergence theorem.
This statement is general and will not be repeated hereafter for the other terms.

\mds

When $J_2\neq \emptyset$ or $m_1=1$, note that
\begin{eqnarray*}
\lefteqn{
 \int_{\BB_{n,|J_1|}} g_\omega(\uu_{J_1}:\cc_{n,J_2}:\dd_{n,J_3}\big) \Big|\partial_\theta \ell_1\big(d\uu_{J_1}:\cc_{n,J_2}:\dd_{n,J_3}\big) \Big| }\\
 &=&  \frac{1}{(2n)^{\omega}}\int_{\BB_{n,|J_1|}}
|\bar A_d(\theta;\uu_{J_1};n)|\Big\{ \sum_{j=1}^{m_1} (-\ln u_j)^\theta
+ m_2 \ln (2n)^\theta + m_3 \ln \big(\frac{2n}{2n-1}\big)^\theta \Big\}^{1/\theta-m_1} \\
&\times & \prod_{j=1}^{m_1}\frac{(-\ln u_j)^{\theta-1}}{u_j}\, d\uu_1.
\end{eqnarray*}
Thus, after $m_1-1$ integration on $(1/(2n);1-1/(2n)]$, we obtain
\begin{eqnarray*}
\lefteqn{ \int_{\BB_{n,|J_1|}} g_\omega(\uu_{J_1}:\cc_{n,J_2}:\dd_{n,J_3}\big) \big|\partial_\theta \ell_1\big(d\uu_{J_1}:\cc_{n,J_2}:\dd_{n,J_3}\big)\big| }\\
& \leq & \frac{\bar K_\theta}{(2n)^{\omega}} \int_{1/(2n)}^{1-1/(2n)} \big\{ (-\ln u_1)^\theta + (\ln n)^\theta \big\}^{1/\theta - 1} \frac{(-\ln u_1)^{\theta-1}}{u_1} \, du_1  \\
&\leq &\frac{\bar K'_\theta}{n^{\omega}}  (\ln n)^{1-\theta} \big[ (-\ln u_1)^\theta \big]_{1/(2n)}^{1-1/(2n)},
\end{eqnarray*}
for some constants $\bar K_\theta$ and $\bar K'_\theta$.
The latter term on the r.h.s. is $O(n^{-\omega} \ln n)=o(1)$, proving (\ref{bound_Cn_1}) when $J_2\neq \emptyset$ or when $m_1=1$.

\mds

With exactly the same techniques, it can be proved that $\Fc_2$ and $\Fc_3$ (the family given by the second and third order derivatives of $\theta \mapsto \ell_{1}(\theta;\cdot) $) are $g_\omega$-regular for any $\omega >0$, even when the parameter $\theta$ is free to belong to a neighborhood of $\theta_0$.

\mds

{\bf Study of $\ell_{2}(\theta;\UU)$ and Assumption \ref{assumption_regularity}:}
Since $\psi$ is the Laplace transform of a stable distribution, it is completely monotone and $(-1)^d \psi^{(d)}(t) >0$ for every $t\in \Rb^+$.
Thus, $ Q_\theta(t) >0$, $t\in \Rb_+^*$.
By definition, $Q_\theta(t):=\sum_{k=1}^d a_{k,\theta} t^{k/\theta - d}.$
By recursion, it can be proved that $a_{1,\theta}=-\prod_{k=0}^{d-1} (k-1/\theta )$, that is not zero because $\theta>1$.
Moreover, $a_{d,\theta}=1/\theta^d$.
Therefore, there exists a positive constant $\lambda_\theta$ s.t.
\begin{equation}
t^d \big| Q_\theta(t)  \big|= t^d Q_\theta(t) =   \sum_{k=1}^d a_{k,\theta} t^{k/\theta} \geq \lambda_\theta t^{1/\theta},
 \label{lower_bound_Qtheta}
 \end{equation}
for any $t>0$.
Note that we can write $\partial_\theta Q_\theta(t)= \sum_{k=1}^d \big\{\beta_{k,\theta}+\gamma_{k,\theta}\ln t\big\} t^{k/\theta-d}$, for some constants $\beta_{k,\theta}$ and 
$\gamma_{k,\theta}$.
As above and to lighten notations, denote $ t_\theta(\uu):=\sum_{j=1}^{d} (-\ln u_j)^\theta$, also denoted $t$ simply.
Simple calculations yield
$$ \partial_\theta \ell_{2}(\theta;\uu)=\frac{\partial_\theta Q_\theta}{Q_\theta}\big( t_\theta(\uu)\big) \partial_\theta t_\theta(\uu),\;\;
\partial_\theta t_\theta(\uu)=
 \sum_{j=1}^{d} (-\ln u_j)^\theta  \ln (-\ln u_j) .$$
Moreover, successive derivatives yield
$$ \partial^2_{\theta,u_1} \ell_{2}(\theta;\uu)=
(\partial_\theta \ln Q_\theta)'(t) \partial_\theta t_\theta(\uu) \partial_{u_1} t_\theta(\uu)
+ (\partial_\theta \ln Q_\theta)(t) \partial^2_{\theta,u_1} t_\theta(\uu),$$
\begin{eqnarray*}
\lefteqn{ \partial^3_{\theta,u_1,u_2} \ell_{2}(\theta;\uu)=
(\partial_\theta \ln Q_\theta)''(t) \partial_\theta t_\theta(\uu) \partial_{u_1} t_\theta(\uu) \partial_{u_2} t_\theta(\uu) }\\
&+& (\partial_\theta \ln Q_\theta)'(t) \Big\{\partial^2_{\theta,u_1} t_\theta(\uu)\partial_{u_2} t_\theta(\uu)
 +  \partial^2_{\theta,u_2} t_\theta(\uu)\partial_{u_1} t_\theta(\uu)\Big\},
\end{eqnarray*}
and, by iteration, we get
\begin{eqnarray*}
\lefteqn{ \partial^{d+1}_{\theta,u_1,\cdots,u_d} \ell_{2}(\theta;\uu)=
(\partial_\theta \ln Q_\theta)^{(d)}(t) \partial_\theta t_\theta(\uu) \prod_{j=1}^d \partial_{u_j} t_\theta(\uu) }\\
&+& (\partial_\theta \ln Q_\theta)^{(d-1)}(t) \sum_{k=1}^d \partial^2_{\theta,u_k} t_\theta(\uu) \prod_{j=1,j\neq k}^d \partial_{u_j} t_\theta(\uu) \\
&=&
(\partial_\theta \ln Q_\theta)^{(d)}(t) \big\{ \sum_{j=1}^{d} (-\ln u_j)^\theta \ln(-\ln u_j) \big\} \prod_{j=1}^d \frac{\theta(-\ln u_j)^{\theta-1}}{u_j} \\
&-& (\partial_\theta \ln Q_\theta)^{(d-1)}(t)
\sum_{k=1}^d \big\{ 1+\theta \ln (-\ln u_k)\big\}\prod_{j=1}^d \frac{(-\ln u_j)^{\theta-1}}{u_j}=:W_1(\uu)- W_2(\uu).
\end{eqnarray*}
Note that, for every positive integer $p$, there exists some constants $b_{p,k}$ and $c_{p,k}$ s.t.
\begin{equation}
 (\partial_\theta \ln Q_\theta )^{(p)}(t)  = \frac{\sum_{k=p+1}^{d(p+1)} (b_{k,p}+ c_{k,p}\ln t) t^{k/\theta - p}}{\big( \sum_{k=1}^d  a_{k,\theta} t^{k/\theta}\big)^p },
 \;\; t>0.
\label{Hp(t)}
\end{equation}
As before, we have by symmetry
\begin{eqnarray*}
\lefteqn{ \int_{(0,1]^d} g_\omega(\uu) |\ell_2(\theta;d\uu)| =
 d!\int_{0< u_1\leq u_2\leq \cdots \leq u_d\leq 1} g_\omega(\uu) |\ell_2(\theta;d\uu)| }\\
&= &  \int_{0< u_1\leq u_2\leq \cdots \leq u_d\leq 1} \min(u_1,1-u_2)^\omega
|\partial^{d+1}_{\theta,u_1,\ldots,u_d} \ell_{2}(\theta;\uu)|\,d\uu \\
&\leq  &  \int_{0< u_1\leq u_2\leq \cdots \leq u_d\leq 1} u_1^\omega
|W_1(\uu)|\,d\uu + \int_{0< u_1\leq u_2\leq \cdots \leq u_d\leq 1} u_1^\omega
|W_2(\uu)|\,d\uu \\
&=:& \Ic_1+\Ic_2.
\end{eqnarray*}
Deduce from~(\ref{lower_bound_Qtheta}) and~(\ref{Hp(t)}) that the first latter term $\Ic_1$ is smaller than a linear combination of integrals as
\begin{equation}
\int_{0< u_1\leq u_2\leq \cdots \leq u_d\leq 1} u_1^\omega
|\ln t|^\tau t^{k/\theta - d} t^{-d/\theta} (-\ln u_i)^\theta |\ln(-\ln u_i)| \prod_{j=1}^d \frac{(-\ln u_j)^{\theta-1}}{u_j} \,d\uu,
\label{ex_I1}
\end{equation}
for some $i\in \{1,\ldots,d\}$, $\tau\in \{0,1\}$, and $k\in \{d+1,\ldots, d+d^2 \}.$
But, for any $\epsilon>0$,
there exist some positive constants $\alpha$ and $\beta$ s.t.
\begin{equation}
 |\ln t|\leq \alpha t^\epsilon + \beta t^{-\epsilon},\;\; t>0.
\label{ineg_ln_eps}
\end{equation}
Moreover, there exist some positive constants $\alpha'$ and $\beta'$ s.t.
 $$ (-\ln u)^\theta |\ln(-\ln u)| \leq \alpha' + \beta' (-\ln u)^{\theta'},\;\; u\in (0,1),$$
 for any $\theta'>\theta$.
In the ``worst case'', the latter integral~(\ref{ex_I1}) is smaller than a constant times
\begin{eqnarray}
\lefteqn{ \int_{0< u_1\leq u_2\leq \cdots \leq u_d\leq 1} u_1^\omega
t^{k/\theta - d-d/\theta-\epsilon} \big\{ (-\ln u_1)^{\theta+1}+1\big\} \prod_{j=1}^d \frac{(-\ln u_j)^{\theta-1}}{u_j} \,d\uu } \nonumber \\
&\propto &
\int   u_1^\omega
t_\theta(u_1,1,\ldots,1)^{(k-d)/\theta-\epsilon-1} \big\{ (\ln \frac{1}{u_1})^{\theta+1} + 1 \big\} \frac{(-\ln u_1)^{\theta-1}}{u_1} \,du_1,
\label{upper_bound_I1}
\end{eqnarray}
after $d-1$ integrations w.r.t. $u_d,u_{d-1},\ldots,u_2$ successively.
The r.h.s. of~(\ref{upper_bound_I1}) is smaller than
$$
\int   u_1^{\omega-1}
(-\ln u_1)^{\theta(k/\theta-d/\theta-\epsilon-1) + \theta-1} \big\{ (-\ln u_1)^{\theta+1} + 1 \big\} \,du_1.
$$
By choosing $\epsilon = 1/(2\theta)$, the latter integral is finite because
$$ \theta(k/\theta-d/\theta-\epsilon-1) + \theta-1 = k-d-1 -\theta \epsilon \geq -\theta \epsilon =(-1/2).$$
This proves that $\Ic_1$ is finite.

\mds

Similarly, $\Ic_2$ is smaller than a linear combination of integrals as
\begin{equation}
\int_{0< u_1\leq u_2\leq \cdots \leq u_d\leq 1} u_1^\omega
|\ln t|^\tau t^{k/\theta - d+1} t^{-(d-1)/\theta} |\ln(-\ln u_i)|^{\bar\tau} \prod_{j=1}^d \frac{(-\ln u_j)^{\theta-1}}{u_j} \,d\uu,
\label{ex_I2}
\end{equation}
for some $i\in \{1,\ldots,d\}$, $(\tau,\bar\tau)\in \{0,1\}^2$, and $k\in \{d,\ldots, d(d-1) \}.$
Reminding~(\ref{ineg_ln_eps}), note that
$$ |\ln(-\ln u)| \leq \alpha (-\ln u)^{-\epsilon}  + \beta (-\ln u)^{\epsilon},\;\; u\in (0,1).$$
Therefore, the ``worst situation'' to manage $\Ic_2$ is to evaluate
\begin{equation}
 \int_{0< u_1\leq u_2\leq \cdots \leq u_d\leq 1} u_1^\omega
t^{k/\theta - (d-1)-(d-1)/\theta-\epsilon} (-\ln u_i)^{-\epsilon} \prod_{j=1}^d \frac{(-\ln u_j)^{\theta-1}}{u_j} \,d\uu .
\label{Ic2_type_integral}
\end{equation}
 When $i=1$, integrate the latter integral w.r.t. $u_d,u_{d-1},\ldots,u_2$ successively, and we obtain a scalar times the integral
$$ \int u_1^\omega
t_\theta(u_1,1,\ldots,1)^{(k-d+1)/\theta-\epsilon} \frac{(-\ln u_1)^{\theta-1-\epsilon}}{u_1} \,du_1 
= \int u_1^{\omega-1} (-\ln u_1)^{\theta \{(k-d+1)/\theta -\epsilon \}+\theta-1-\epsilon} \,du_1,$$
that is finite because
$$ \theta\big\{(k-d+1)/\theta -\epsilon\big\}-\epsilon+\theta-1 \geq \theta-\epsilon (\theta+1)-1>0,$$
for some sufficiently small constant $\epsilon$.

\mds

When $i>1$, first integrate~(\ref{Ic2_type_integral}) w.r.t. $u_i$, but on $(u_1,1]$ instead of $(u_{i-1},1]$.
This will yield an upper bound of the $\Ic_2$-type term~(\ref{Ic2_type_integral}).
In such a case, note that
$$ t_\theta(\uu_{-i},1) \leq  t_\theta(\uu) \leq  2 t_\theta(\uu_{-i},1),\;\; \uu\in (0,1]^d,$$
with obvious notations.
Thus, the term $t_\theta(\uu)$ in~(\ref{ex_I2}) can be replaced by $t_\theta(\uu_{-i},1)$ that does not depend on $u_i$.
The new integral w.r.t. $u_i$ is then
$$ \int_{u_1}^1 \frac{(-\ln u_i)^{\theta-1-\epsilon}}{u_i}du_i = \frac{(-\ln u_1)^{\theta-\epsilon}}{\theta - \epsilon},$$
and we will choose $\epsilon < \theta$.
To bound~(\ref{Ic2_type_integral}), we are restricted to the evaluation of
$$ \int_{u_1\leq \cdots \leq u_{i-1}\leq u_{i+1}\leq \cdots \leq u_d} u_1^\omega
t_\theta(\uu_{-i},1)^{(k-d+1)/\theta - (d-1)-\epsilon} (-\ln u_1)^{\theta-\epsilon} \prod_{j=1,j\neq i}^d \frac{(-\ln u_j)^{\theta-1}}{u_j} \,d\uu_{-i} .$$
Now, integrate w.r.t. $u_d,u_{d-1},\ldots,u_{i+1},u_{i-1},\ldots, u_2$ successively. We obtain a scalar times the integral
\begin{eqnarray*}
\lefteqn{
\int u_1^{\omega}
t_\theta(u_1,1,\ldots,1)^{(k-d+1)/\theta-\epsilon-1} \frac{(-\ln u_1)^{2\theta-\epsilon -1}}{u_1} \,du_1 }\\
&=& \int_0^1 u_1^{\omega-1}
(-\ln u_1)^{k-d + \theta -\theta\epsilon-\epsilon} \,du_1,
\end{eqnarray*}
that is finite for any $\omega>0$, because
$$ k-d +\theta -\theta\epsilon-\epsilon \geq \theta-(\theta+1)\epsilon  >0,$$
for some sufficiently small constant $\epsilon$.
This means (\ref{bound_fg}) is satisfied for $\partial_\theta\ell_2(\theta_0;\cdot)$.

\mds

The same technique can be applied to check (\ref{bound_Cn_1})
for $\partial_\theta\ell_2(\theta_0;\cdot)$,
mimicking the reasonings we led for $\partial_\theta\ell_1(\theta_0;\cdot)$.

\mds

With exactly the same techniques, it can be proved that $\Fc_2$ and $\Fc_3$ (here defined through $\ell_2$)
are $g_\omega$-regular for any $\omega >0$. Actually, this is still the case for any higher-order $\theta$-derivatives of the loss function. Indeed, the effect of such derivatives will be to add some multiplicative factors $\ln(-\ln u_j)$, $j\in \{1,\ldots,d\}$, and such factors do not play any role to check $g_\omega$-regularity.

\mds

Since Assumption 3 is trivially satisfied with $\ell_{3}(\theta;\uu)$, we have proven the validity of this assumption for the Gumbel family.

\begin{remark}
Note that we have proved the regularity assumptions as if the weight function $g_{\omega,d}$ were replaced by $\uu\mapsto \min_{j}u_j^\omega$, implying a stronger requirement.
\end{remark}

\section{Regularity conditions for Clayton copulas}
\label{verif_regul_Clayton_cop}

We now verify that the Clayton copula family fulfills all regularity conditions that are required to apply Theorems \ref{bound_proba} and~\ref{oracle_property} when the loss is chosen as the opposite of the log-copula density.
A $d$-dimensional Clayton copula is defined by $C_\theta(\uu):=\psi_\theta\big( \sum_{j=1}^d \psi_\theta^{-1}(u_j)\big)$ where
$\psi_\theta(t)=(1+t)^{-1/\theta}$, $t\in \Rb^+$, for some parameter $\theta>0$.
Note that $\psi_\theta^{-1}(u)=u^{-\theta} -1$, $u\in (0,1]$ and
$$ C_\theta(\uu)=\Big\{ \sum_{j=1}^d u_j^{-\theta} - d + 1  \Big\}^{-1/\theta},\; \; \uu\in (0,1]^d.$$
The associated density on $(0,1]^d$ is
$$ c_\theta(\uu):=\prod_{k=0}^{d-1}(1+k\theta) \big\{ \sum_{j=1}^d u_j^{-\theta} - d+1 \big\}^{-1/\theta - d} \prod_{j=1}^d u_j^{-\theta - 1},$$
and the considered loss will be the $\ell(\theta;\uu)=-\ln c_\theta(\uu)$.
Note that
$$ \ell(\theta;\uu)=M(\theta) +
\big(\frac{1}{\theta}+d\big) \ln\big(\sum_{j=1}^d u_j^{-\theta}-d+1 \big) +
(1+\theta)\big(\sum_{j=1}^d \ln u_j\big) , $$
where $M(\theta)$ is a positive map of $\theta$ only.

\mds

Assumption \ref{regularity_assumption} is satisfied because $\partial^k_\theta\ell(\theta_0;\UU)$ is nonzero and integrable for any $k\in \{1,2,3\}$, 
even uniformly w.r.t. $\theta$ in a small neighborhood of $\theta_0$. This can be easily seen by noting that 
$$ \frac{ |\sum_{j=1}^d u_j^{-\theta}\ln u_j |}{\sum_{j=1}^d u_j^{-\theta}-d+1}\leq \sum_{j=1}^d |\ln u_j|,$$
for every $\uu\in (0,1]^d$ because $\sum_{j=1}^d u_j^{-\theta}-d+1 \geq u_k^{-\theta}$, $k\in \{1,\ldots,d\}$,
and $\int |\ln u_k| c_\theta(\uu)\, d\uu $ is finite for every $k$.

\mds

To state Assumption \ref{cond_reg_copula} and w.l.o.g., let us focus on the cross-derivative w.r.t. the first two components of the true copula. By simple calculations, we get
$$ \partial_{1,2} C_\theta(\uu)=
\frac{\psi^{''}\big(\sum_{j=1}^d \psi_\theta^{-1}(u_j)  \big)}{\psi'\circ \psi(u_1)\psi'\circ \psi(u_2)}
= (1+\theta) s_\theta(\uu)^{-1/\theta-2} (u_1 u_2)^{-\theta-1},
$$
setting $s_\theta(\uu):=\sum_{j=1}^d u_j^{-\theta}-d+1 \in \Rb^+$.
Since $C_\theta(\uu)\leq \min_j u_j$,
deduce
\begin{eqnarray*}
\lefteqn{ |\partial_{1,2} C_\theta(\uu)| =
(\theta+1) C_\theta (\uu)^{1+2\theta} (u_1 u_2)^{-\theta-1}\leq 
(\theta+1) (\min_j u_j)^{1+2\theta} (u_1 u_2)^{-\theta-1}}\\
&\leq & (\theta+1) \min (u_1,u_2)^{1+2\theta} \min (u_1,u_2)^{-2\theta-2} = 
O\Big( \min \big\{ (u_1(1-u_1))^{-1};(u_2(1-u_2))^{-1}   \big\} \Big).
\end{eqnarray*}

\mds

Let us check Assumption \ref{assumption_regularity}, i.e. the $g_\omega$ regularity of the partial derivatives of the loss function.
We will do the task for $\Fc_1=\{\uu\mapsto \partial_\theta \ell(\theta_0;\uu) \}$ only.
The task for $\Fc_2$ and $\Fc_3$, or even for every higher-order derivatives of the loss w.r.t. $\theta$, is obtained by exactly similar reasonings.
Simple calculations yield
$$\partial_\theta\ell(\theta;\uu)=M'(\theta)
-\frac{1}{\theta^2}\ln s_\theta(\uu)  -
\big(\frac{1}{\theta}+d\big) \frac{\sum_{j=1}^d u_j^{-\theta} \ln u_j }{s_\theta (\uu)}
+ \big(\sum_{j=1}^d \ln u_j\big)   .$$

First, let us show that
the map $\uu\mapsto \min_k \min(u_k,1-u_k)^\omega |\partial_\theta \ell(\theta_0;\uu)|$ is bounded on $(0,1)$ for any positive $\omega$. We will replace $\theta_0$ by $\theta$ hereafter, to weaken our notations.
W.l.o.g., assume that $u_1\leq u_2\leq \cdots \leq u_d$.
Then, $s_\theta(\uu) \in [ u_1^{-\theta}, du_1^{-\theta}-d+1]$.
We deduce
\begin{eqnarray*}
\lefteqn{
\min_k \min(u_k,1-u_k)^\omega |\partial_\theta \ell(\theta;\uu)| \leq
u_1^\omega \Big\{ |M'(\theta)| +
\frac{1}{\theta^2}\ln \big(du_1^{-\theta}-d+1 \big)  }\\
&+&
\big(\frac{1}{\theta}+d\big) \frac{d u_1^{-\theta} |\ln u_1| }{u_1^{-\theta}}
+ d |\ln u_1| \Big\} \\
 &= &
 O\Big( u_1^\omega (1+ |\ln u_1|) +
u_1^\omega\ln \big(du_1^{-\theta}-d+1 \big)   \Big),
\end{eqnarray*}
that is a bounded function of $\uu$.

\mds

Second, by simple calculations, it can be easily seen that $\partial_\theta \ell(\theta;d\uu)=h(\theta ;\uu) \, d\uu$, for some map $\uu\mapsto h(\theta;\uu)$ that
is a linear combination of the maps
$$D_0(\uu):=\prod_{j=1}^d u_j^{-\theta-1} s_\theta(\uu)^{-d},\;\; D_k(\uu):= \prod_{j=1,j\neq k}^d u_j^{-\theta-1} \frac{u_k^{-\theta-1}\ln u_k}{s_\theta(\uu)^{d}},$$
$$\text{and} \;\;D^*_k(\uu):= \prod_{j=1}^d u_j^{-\theta-1} \frac{u_k^{-\theta}\ln u_k}{s_\theta(\uu)^{d+1}},$$
for every $k\in \{1,\ldots,d\}$.

\mds

Let us check (\ref{bound_fg}) for all the latter maps.
Concerning $D_0$, this means we have to show
\begin{equation}
0\leq \int g_\omega(\uu) D_0(\uu)\, d\uu<\infty.
\label{Assump3_Clayton_D0}
\end{equation}
Indeed, by symmetry, we have
$$ \int g_\omega(\uu) D_0(\uu)\, d\uu
= d!\int_{u_1\leq u_2\leq \cdots \leq u_d}  g_\omega(\uu) D_0(\uu)\, d\uu
\leq
d!\int_{u_1\leq u_2\leq \cdots \leq u_d}  u_1^\omega
\frac{\prod_{j=1}^d u_j^{-\theta-1}}{ s_\theta(\uu)^{d}}\, d\uu .$$
By an integration w.r.t. $u_d \in (u_{d-1},1]$, the latter integral is smaller than a constant times
$$ \int_{u_1\leq u_2\leq \cdots \leq u_{d-1}}  u_1^\omega \frac{\prod_{j=1}^{d-1} u_j^{-\theta-1}}{ \big\{ \sum_{j=1}^{d-1}u_j^{-\theta} - d+2 \big\}^{d-1}}\, d\uu .$$
By integrating w.r.t. $u_{d-1},u_{d-3},\ldots,u_2$ successively, we obtain a constant times
$$ \int_0^1 u_1^\omega u_1^{-\theta-1} \frac{du_1}{u_1^{-\theta}-d+d}=\int_0^1 u_1^{\omega - 1}\, du_1 <\infty,$$
proving~(\ref{Assump3_Clayton_D0}).
To manage $\int g_\omega(\uu) D_k(\uu)\, d\uu$ for any $k$,
note that, for any $\epsilon >0$, there exist some positive constants $\alpha$ and $\beta$ s.t.
$$ |\ln t| \leq \alpha t^{-\epsilon}+\beta t^\epsilon,\;\; t>0.$$
Then, apply the same technique as for $D_0$. The upper bound is here reduced to a constant times $\int u_1^{\omega-\epsilon} \, du_1$, that is finite by choosing
$\epsilon <\theta$. The same ideas apply to deal with $D^*_k$:
$u_k^{-\theta}|\ln u_k|/s_\theta(\uu)\leq \gamma u_k^{-\epsilon}$ for some constant $\gamma$, and we recover the $D_k$ case.
We have then proved (\ref{bound_fg}) for $\Fc_1$.

\mds

The same technique can be applied to check (\ref{bound_Cn_1}), assuming $J_2\cup J_3\neq \emptyset$.
Denote $m_k:=\text{Card}(J_k)$, $k\in \{1,2,3\}$.
W.l.o.g., let us assume that the components indexed by $J_1$ are the first ones, i.e. are $u_1,\ldots,u_{m_1}$.
By simple calculations, it can be easily seen that $\partial_\theta \ell(\theta;\uu_{J_1}:\cc_{n,J_2}:\dd_{n,J_3}\big)
= h_{J_2,J_3}(\theta ;\uu_{J_1}) \, d\uu_{J_1}$, for some map $\uu_{J_1}\mapsto h_{J_2,J_3}(\theta;\uu_{J_1})$ whose absolute value
is smaller than a linear combination of the maps
$$\widetilde D_{0,J_1}(\uu_{J_1}):=\frac{1}{\widetilde s_\theta(\uu_{J_1})^{m_1}}\prod_{j=1}^{m_1} u_j^{-\theta-1} ,\;\;
\widetilde D^*_{0,J_1}(\uu_{J_1}):=\frac{(|J_2|n^\theta\ln n+1)}{\widetilde s_\theta(\uu_{J_1})^{m_1+1}}\prod_{j=1}^{m_1} u_j^{-\theta-1} ,$$
$$\widetilde D_{k,J_1}(\uu_{J_1}):= \frac{u_k^{-\theta-1}\ln u_k}{\widetilde s_\theta(\uu_{J_1})^{m_1}}\prod_{j=1,j\neq k}^{m_1} u_j^{-\theta-1} ,
\text{and} \;\widetilde D^*_{k,J_1}(\uu_{J_1}):=  \frac{u_k^{-\theta}\ln u_k}{\widetilde s_\theta(\uu_{J_1})^{m_1+1}}\prod_{j=1}^{m_1} u_j^{-\theta-1},$$
for every $k\in \{1,\ldots,m_1\}$, by setting
$$\widetilde s_\theta(\uu_{J_1}):= \sum_{j=1}^{m_1}u_j^{-\theta} + |J_2| (2n)^\theta + |J_3| (1-1/2n)^{-\theta} - d+1.$$
 We will check Assumption (\ref{bound_Cn_1}) for all latter maps.

\mds

For every $\uu \in \BB_{n,|J_1|}$, $J_1\neq \emptyset$, note that
$$ g_\omega(\uu_{J_1}) := g_\omega(\uu_{J_1}:\cc_{n,J_2}:\dd_{n,J_3}\big) = 1/(2n)^{\omega},$$
when $J_2\neq\emptyset$ or $m_1=1$.
For the moment, let us assume this is the case.

\mds

To deal with $\widetilde D_{0,J_1}$, we have by symmetry
\begin{eqnarray*}
\lefteqn{
 \int_{\BB_{n,|J_1|}}  g_\omega(\uu_{J_1}) \Big|\widetilde D_{0,J_1}\big(\uu_{J_1}\big)  \Big| \, d\uu_{J_1}}\nonumber \\
 &\leq &  m_1!
 \int_{\BB_{n,|J_1|},u_1\leq \cdots \leq u_{m_1}} g_\omega(\uu_{J_1})\Big|\widetilde D_{0,J_1}\big(\uu_{J_1}\big)  \Big| \, d\uu_{J_1} \nonumber \\
 &\leq &  \frac{m_1!}{(2n)^\omega}
 \int_{\BB_{n,|J_1|},u_1\leq \cdots \leq u_{m_1}}  \frac{\prod_{j=1}^{m_1} u_j^{-\theta-1}\,d\uu_{J_1}}{\big(\sum_{j=1}^{m_1} u_j^{-\theta} +
 |J_2| (2n)^{\theta} + |J_3| (1- 1/2n)^{-\theta}- d+1 \big)^{m_1}}  \cdot
\label{A2_D0_decomp}
\end{eqnarray*}
First integrate w.r.t. $u_{m_1}$ between $u_{m_1-1}$ and $1-1/2n$. The absolute value of the latter integral w.r.t. $u_{m_1}$ can be bounded by a constant times
\begin{eqnarray*}
\lefteqn{
  \frac{1}{\big(\sum_{j=1}^{m_1-1} u_j^{-\theta}
 +|J_2| (2n)^{\theta} + (|J_3|+1) (1- 1/2n)^{-\theta}- d+1 \big)^{m_1-1} }   }\\
 &+& \frac{1}{\big(\sum_{j=1}^{m_1-1} u_j^{-\theta} + u_{m_1-1}^{-\theta}
 +|J_2| (2n)^{\theta} + |J_3| (1- 1/2n)^{-\theta} - d+1\big)^{m_1-1} }    \\
 &\leq &
 \frac{2}{\big(\sum_{j=1}^{m_1-1} u_j^{-\theta}
 +|J_2| (2n)^{\theta} + |J_3| (1- 1/2n)^{-\theta} - d + 2\big)^{m_1-1}  }\cdot
\end{eqnarray*}
Then, successively integrate w.r.t. $u_{m_1-1}, u_{m_1-2},\ldots, u_2$ using the same type of upper bounds for every integral.
We finally obtain an $u_1$-integral of order
$$  \frac{1}{n^\omega}\int_{1/2n}^{1-1/2n}  u_1^{-\theta -1} \frac{du_1}{u_1^{-\theta}
+ |J_2| (2n)^{\theta} + |J_3| (1- 1/2n)^{-\theta}  - d + m_1},$$
that is $O(\ln n/n^\omega)$ and then tends to zero with $n$ for any $\omega>0$.
Thus, (A.2) is proven in the case of the integrand $\widetilde D_{0,J_1}$.
The terms $\widetilde D_{k,J_1}$ are managed similarly by noting that $|\ln u_k|\leq \ln (2n) $ when $u_k\in (1/2n;1-1/2n]$.

\mds

The task is similar with $\widetilde D^*_{0,J_1}$: after $m_1-1$ integrations w.r.t. $u_{m_1}$, $u_{m_1-1}$, etc, $u_2$, we get
\begin{eqnarray*}
\lefteqn{
  \frac{(|J_2|n^\theta \ln n +1)}{n^\omega}\int_{1/2n}^{1-1/2n} u_1^{-\theta -1} \frac{du_1}{\big( u_1^{-\theta}
+ |J_2| (2n)^{\theta} + |J_3| (1- 1/2n)^{-\theta}  - d + m_1 \big)^2} }\\
&=&
  O \Big(  \frac{n^{\theta-\omega} \ln n}{
 (|J_2|+1) (2n)^{\theta} + |J_3| (1- 1/2n)^{-\theta}  - d + m_1 }   \Big)=  O \Big(  \frac{n^{\theta-\omega} \ln n}{ n^{\theta}  }   \Big)=o(1),
\end{eqnarray*}
for any $\omega>0$.
The terms $\widetilde D^*_{k,J_1}$ are managed similarly by noting that $|u_k^{-\theta}\ln u_k| = O(n^\theta\ln n) $ when $u_k\in (1/2n;1-1/2n]$.

\mds

Thus, it remains to consider the case $J_2=\emptyset$ and $m_1\geq 2$ to check (A.2).
Apply the same technique as above, invoking $g_\omega(\uu)\leq u_1^\omega$ for every $\uu\in [0,1]^d$.
Moreover, after every integration stage, it is possible to replace $1-1/2n$ by $1$ in the denominators. For instance, in the case of $\widetilde D_{0,J_1}$,
the integration w.r.t. $u_{m_1}$ yields
\begin{eqnarray*}
\lefteqn{
  \frac{1}{\big(\sum_{j=1}^{m_1-1} u_j^{-\theta}
 + (|J_3|+1) (1- 1/2n)^{-\theta}- d+1 \big)^{m_1-1} }   }\\
 &+& \frac{1}{\big(\sum_{j=1}^{m_1-1} u_j^{-\theta} + u_{m_1-1}^{-\theta}
 + |J_3| (1- 1/2n)^{-\theta} - d+1\big)^{m_1-1} }    \\
 &\leq &
 \frac{2}{\big(\sum_{j=1}^{m_1-1} u_j^{-\theta}
 -m_1+ 2\big)^{m_1-1}  },
\end{eqnarray*}
because $|J_3|=d-m_1$ in our case.
After $m_1-1$ integration stages, we obtain
$$  \int_{1/2n}^{1-1/2n}  u_1^{\omega-\theta -1} \frac{du_1}{u_1^{-\theta}},$$
that is finite, as required.
This is still the case for term $\widetilde D_{0,J_1}^*$, obviously, when $|J_2|=0$ because $\widetilde s_\theta(\uu)\geq 1$.
The terms $\widetilde D_{k,J_1}$ are managed similarly because $|\ln u_k|\leq |\ln u_1| $ for every $k$ and
$  \int_{1/2n}^{1-1/2n} |\ln u_1| u_1^{\omega-1} \, du_1 =O(1) .$
The same ideas apply with the terms $\widetilde D^*_{k,J_1}$, because
$\widetilde D^*_{k,J_1}(\uu) \leq \widetilde D_{0,J_1}(\uu)|\ln u_1| .$
To conclude, we have checked (\ref{bound_Cn_1}) in every situation.

\mds

Since Assumption \ref{assumption_regularity} is now satisfied with $\ell(\theta;\uu)$, we have proven the validity of our assumptions in the case of the Clayton family.

\begin{remark}
As for the Gumbel copula family, we have proved the regularity assumptions for the Clayton family as if the weight function were replaced by $\uu\mapsto \min_{j}u_j$, a stronger property.
\end{remark}

\section{Asymptotic properties for parameters of varying dimensions}
\label{asym_prop_pn}

In this section, we deal with the case of copula parameters whose dimensions are functions of the sample size. 
More formally, we consider a sequence of parametric copula models
$\Pc_n:=\{\Pb_{\theta_n}, \,\theta_n \in \Theta_n\}$, for some subsets $\Theta_n\subset \Rb^{p_n}$, $n\geq 1$.
Therefore, the number of unknown parameters $p_n$ may vary with the sample size $n$. In particular, the sequence $(p_n)$ could tend
to the infinity with $n$, i.e. $p=p_n\rightarrow \infty$ when $n\rightarrow \infty$, but it is not mandatory.

\mds

As a consequence, in this section only, we introduce a sequence of loss functions $(\ell_n)_{n\geq 1}$, which hereafter enters in an associated map $\Lb_n$ 
(whose notation remains unchanged, to simplify): for every $n$, the map $\ell_n : \Theta_n \times (0,1)^{d} \rightarrow \Rb$ defines the ``global loss'' map
 \begin{equation}
 \Lb_n(\theta_n;\uu_1,\ldots,\uu_n) := \overset{n}{\underset{i=1}{\sum}} \ell_n(\theta_n;\uu_i),
\label{loss_as_a_sum_diverging}
 \end{equation}
for every $\theta_n\in\Theta_n$ and every $(\uu_1,\ldots,\uu_n)$ in $(0,1)^{dn}$.
\begin{assumption}
\textcolor{black}{For every $n$, the parameter space $\Theta_n$ is a borelian subset of $\Rb^{p_n}$. 
The function $\theta_n \mapsto \Eb[\ell_n(\theta_n;\UU)]$ is uniquely minimized on $\Theta_n$ at $\theta_{n,0}$, and an open neighborhood of $\theta_{n,0}$ is contained in $\Theta_n$.}
\label{assump_theta0_diverging}
\end{assumption}
\textcolor{black}{Exactly as detailed in the core of the main text (see the discussion after Assumption \ref{assump_theta0}), our theory applies when $\theta_{n,0}$ belongs to the frontier of $\Theta_n$. Technical details are left to the reader.} Let us illustrate the relevance of the diverging dimension case for copula selection. 
\begin{example}[Example~\ref{ex_copula_mixture} cont'd]
\label{mixture_cop_ex}
Consider an infinite sequence of given copulas $(C^{(n)})_{k\geq 1}$ in dimension $d$.
The true underlying copula will be estimated by a sequence of finite mixtures, given by the weighted sum of the first $p_n+1$ copulas $C^{(j)}$, $j\in\{1,\ldots,p_n+1\}$. If we choose the CML method, the loss function $\ell_n(\theta_n;\cdot)$ is the log-copula density associated with $\sum_{k=1}^{p_n+1} \theta_{n,k} C^{(k)}$. 
In theory, we could extend the latter framework by assuming that every $C^{(k)}$ depends on an unknown parameter that has to be estimated in addition to the weights, but the numerical estimation of the enlarged unknown vector of parameters will surely become numerically challenging.
\end{example}
\begin{example}
A probably more relevant application is related to single-index copulas with a diverging number of underlying factors and a known link function $\zeta$: in the same spirit as Example~\ref{ex_single_index_sparsity}, the conditional copula of $\XX\in \Rb^d$ given $\ZZ=\zz\in \Rb^{m_n}$ would be
the $d$-dimension copula $C_{\zeta(\zz^\top\beta_n)}$ for some parameter $\beta_n\in \Rb^{m_n}$ to be estimated and a given parametric copula family 
$\Cc:=\{ C_\theta; \theta \in \Theta\subset \Rb^p\}$.  
\end{example}
We have now to consider a sequence of estimators $(\widehat \theta_n)_{n\geq 1}$ defined as
\begin{equation} \label{obj_crit_n}
\widehat{\theta}_n \, \textcolor{black}{\in} \, \underset{\theta_n \in \Theta_n}{\arg \; \min} \; \Big\{ \Lb_n(\theta_n;\widehat{\Uc}_n) + n
\overset{p_n}{\underset{k=1}{\sum}}\pp(\lambda_n,|\theta_{n,k}|)\Big\}.
\end{equation}
We will focus on the distance between $\theta_{n,0}$ and $\widehat{\theta}_n$.
\begin{remark}
\label{rem_diverging_nbr_param}
Note that we do not evaluate the distance between $\theta_{n,0}$ (or $\widehat{\theta}_n$) and an hypothetical ``true parameter'' $\theta_0$, because they generally do not live in the same spaces. 
In some cases, it is possible to ``dive'' $\Theta_n$ into $\Theta$ and/or $\Pc_n$ into a (correctly specified) parametric family of copulas. 
To illustrate, in Example~\ref{mixture_cop_ex}, assume the true copula is an infinite sum of known copulas, i.e. $C_0=\sum_{k=1}^{+\infty} \pi_k C^{(k)}$. 
Setting $\theta_0:=(\pi_1,\pi_2,\ldots)$, some identification constraints have to be found and they depend on the selected copulas $C^{(k)}$.  
Therefore, it would be possible to compare the two infinite sequences $\theta_0$ and $\bar\theta_{n,0}:=(\theta_{n,0},0,0,\ldots)$. Nonetheless, in such cases, 
the distance between $\theta_{0}$ and $\bar\theta_{n,0}$ is strongly model-dependent. Since this is no longer an inference problem but rather a problem of model specification, we will not go further in this direction.
\end{remark}
In terms of notations, for every $n$, the sparse subset of parameters is
$\Ac_n:=\big\{k: \theta_{n,0,k}\neq 0, k = 1,\ldots,p_n\big\}$, and \textcolor{black}{$s_n$ will denote} the cardinality of $\Ac_n$.
Let us rewrite some of our latter assumptions that have to be adapted to the new framework.
\begin{assumption}\label{regularity_assumption_n}
The map $\theta_n\mapsto \ell_n(\theta_n;\uu)$ is thrice differentiable on $\Theta_n$, for every $\uu \in (0,1)^d$.
\textcolor{black}{Any pseudo-true value $\theta_{n,0}$ satisfies  
the first-order condition, i.e. $\Eb[ \nabla_{\theta_n}\ell_n(\theta_{n,0};\UU)] = 0$.} 
Moreover, 
$\Hb_n := \Eb[\nabla^2_{\theta_n\theta_n^\top}\ell_n(\theta_{n,0};\UU)]$ and $\Mb_n := \Eb[\nabla_{\theta_n}\ell_n(\theta_{n,0};\UU)
\nabla_{\theta_n^\top}\ell_n(\theta_{n,0};\UU)]$ exist, are positive definite and $\sup_n \|\Hb_n\|_\infty < \infty$. 
Denoting by $\lambda_1(\Hb_n)$ the smallest eigenvalue of $\Hb_n$,
there exists a positive constant $\underline{\lambda}$ such that $\lambda_1(\Hb_n)\geq \underline{\lambda}>0$ for every $n$. Finally, for every $\epsilon>0$, 
there exists a positive constant $K_\epsilon$ such that
$$ \sup_n \sup_{\{\theta_n; \|\theta_n - \theta_{n,0}\|<\eps\}} \;\sup_{j,l,m}\big|\Eb[\partial^3_{\theta_{n,j}\theta_{n,l}\theta_{n,m}}
\ell_n( \theta_n;\UU)] \big| \leq K_\epsilon.$$
\end{assumption}
\begin{assumption}
\textcolor{black}{For some $\omega$,} the family of maps $\Fc:=\bigcup_{n\geq 1} \Fc_n$, $\Fc_n:=\Fc_{1,n}\cup\Fc_{2,n}\cup\Fc_{3,n}$, from $(0,1)^d$ to $\Rb$ is $g_\omega$-regular, with
$$\Fc_{n,1}:=\{ f:\uu\mapsto \partial_{\theta_{n,k}}\ell_n(\theta_{n,0};\uu); k=1,\ldots,p_n\},$$
$$\Fc_{n,2}:=\{ f:\uu\mapsto \partial^2_{\theta_{n,k},\theta_{n,l}}\ell_n(\theta_{n,0};\uu); k,l=1,\ldots,p_n\},$$
$$\Fc_{n,3}:=\{ f:\uu\mapsto \partial^3_{\theta_{n,k},\theta_{n,l},\theta_{n,j}}\ell_n(\theta_n;\uu); k,l,j=1,\ldots,p_n, \; \|\theta_n-\theta_{n,0}\|<K\},$$
for some constant $K>0$.
\label{assumption_regularity_n}
\end{assumption}
\begin{assumption}\label{assumption_regularizer_n}
Define
$$a_n := \max_{1\leq j\leq p_n} \big\{ \partial_2  \pp(\lambda_n,|\theta_{n,0,j}|),\theta_{n,0,j}\neq 0 \big\}\;\text{and}
\;b_n := \max_{1\leq j\leq p_n} \big\{ \partial^2_{2,2} \pp(\lambda_n,|\theta_{n,0,j}|),\theta_{n,0,j}\neq 0 \big\}.$$
We assume that $a_n\rightarrow 0$ and $b_n\rightarrow 0$ when $n\rightarrow \infty$. Moreover, there exist some constants $M$ and $D$ such that
$ | \partial^2_{2,2} \pp(\lambda_n,\theta_1) - \partial^2_{2,2} \pp(\lambda_n,\theta_2) |\leq D |\theta_1 -\theta_2 |,$
for any real numbers $\theta_1,\theta_2$ such that $\theta_1,\theta_2 > M\lambda_n  $.
\end{assumption}
This set of assumptions extends the regularity conditions of Section~\ref{asym_prop} to the diverging dimension case. In particular, our assumptions~\ref{regularity_assumption_n} and~\ref{assumption_regularizer_n} are in the same vein as in~\citep{fan2004nonconcave}, assumption (F) and condition 3.1.1. respectively. Note that our Assumption~\ref{cond_reg_copula} in the main text does not need to be altered as $d$ remains fixed.

\begin{theorem}\label{bound_proba_n}
\textcolor{black}{Suppose Assumptions \ref{oscillation_modulus_assump}-\ref{tail_empir_processes} given in Appendix \ref{technicalities} are satisfied. Let some $$\omega \in \Big(0,\min\big\{\frac{\textcolor{black}{\kappa}_1}{2(1-\textcolor{black}{\kappa}_1)},\frac{\textcolor{black}{\kappa}_2}{2(1-\textcolor{black}{\kappa}_2)},\textcolor{black}{\kappa}_3 - \frac{1}{2}\big\}\Big),$$ 
and suppose Assumptions~\ref{cond_reg_copula},~\ref{assump_theta0_diverging}-\ref{assumption_regularizer_n} hold for this $\omega$.
} 
Finally, assume that $p_n^2 \ln(\ln n)/\sqrt{n}\rightarrow 0$ and $p_n^2 a_n\rightarrow 0$ when $n\rightarrow \infty$.
Then, there exists a sequence $(\widehat{\theta}_n)_{n\geq 1}$ of solutions of (\ref{obj_crit_n}) that satisfies the bound
\begin{equation*}
\|\widehat{\theta}_n-\theta_{n,0}\|_2 = O_p\Big( \sqrt{p_n} \big(  n^{-1/2}\ln(\ln n) + a_{n} \big)  \Big).
\end{equation*}
\end{theorem}
Note that $p_n$ is allowed to diverge to the infinity, but not too fast, since $p_n=o\big(n^{1/4}\big)$ by assumption.
\begin{proof}
The arguments are exactly the same as those given for the proof of Theorem \ref{bound_proba}.
With the same notations, the dominant term in the expansion comes from $T_2$ and is larger than $n\nu_n^2 \underline{\lambda} \| \mathbf{v} \|_2^2/2$, for some vector $\mathbf{v}\in \Rb^{p_n}$, $\|\mathbf{v}\|_2=L_\epsilon$. By a careful inspection of the previous proof, the result follows if we satisfy
$$ \sqrt{n p_n} \ln(\ln n) \nu_n \| \mathbf{v} \|_2 << n\nu_n^2 \| \mathbf{v} \|_2^2,$$
$$ \sqrt{n} p_n \ln(\ln n) \nu_n^2 \| \mathbf{v} \|_2^2 << n\nu_n^2 \| \mathbf{v} \|_2^2,$$
$$ n p_n^{3/2} \nu_n^3 \| \mathbf{v} \|_2^3 << n\nu_n^2 \| \mathbf{v} \|_2^2, \; \text{and}$$
$$ \sqrt{\|\theta_{n,0}\|_0} n  \nu_n a_n \| \mathbf{v} \|_2 + n b_n \nu_n^2 \| \mathbf{v} \|_2^2 << n\nu_n^2 \| \mathbf{v} \|_2^2.$$
The latter conditions will be satisfied under our assumptions, choosing some vectors $\mathbf{v}$ whose norm
$L_\epsilon$ is sufficiently large and setting $\nu_n := \sqrt{p_n} \big(  n^{-1/2} \ln(\ln n)+ a_{n} \big)  $.
\end{proof}
As in the fixed dimension case, we establish the asymptotic oracle property, i.e. the conditions for which the true support is recovered and the non-zero coefficients are asymptotically normal. We denote by 
$\widehat{\Ac}_n$ the estimated support of our estimator for the $n$-th model, i.e. $\widehat{\Ac}_n:=\big\{k: \widehat\theta_{n,k}\neq 0; k = 1,\ldots,p_n\big\}$. For convenience and w.l.o.g., we assume that the supports are related to the first components of the true parameters, i.e. $\Ac_n=\{1,\ldots,s_n\}$ and $\Ac_n^c=\{s_n+1,\ldots,p_n\}$. Therefore, every (true or estimated) parameter will be split as $\theta_n=:(\theta_{n}^{(1)},\theta_{n}^{(2)})$, where $\theta_{n}^{(1)}$ (resp. $\theta_{n}^{(2)}$) is related to the $\Ac_n$ (resp. $\Ac_n^c$). components.
The statement of the asymptotic distribution with a diverging dimension requires the introduction of a sequence of deterministic real matrices $(Q_n)_{n\geq 1}$, $Q_n$ being of size $q\times s_n$, for some fixed $q>0$.
Denote $ Q_n:=[q_{n,i,j}]_{1\leq l \leq q,1\leq r\leq s_n}$. Define the $q$ sequences of maps $(w_n^{(l)})_{n\geq 1}$, $l\in\{1,\ldots,q\}$, from $\Theta_n\times (0,1)^d$ to $\Rb$ by
\begin{equation*}
w_{n}^{(l)}(\theta_{n};\uu) := \sum_{r=1}^{s_n} q_{n,l,r} \partial_{\theta_{n,r}} \ell_n (\theta_{n};\uu), \; \uu \in (0,1)^d.
\end{equation*}
In addition to Condition~\ref{assumption_regularity_n}, the next assumption allows to obtain the $g_\omega$-regularity of the latter maps.
\begin{assumption}
$ \sup_n \sup_{1\leq l \leq q} \sum_{r=1}^{s_n} |q_{n,l,r}| < \infty.$
\label{Qn_regular_sum}
\end{assumption}
Moreover, we need to introduce a limit for the sequences of maps $(w_n^{(l)}(\theta_{n,0},\cdot))$.
\begin{assumption}
There exist $q$ maps $w_\infty^{(l)}:(0,1)^d\rightarrow \Rb$ that are $g_\omega$-regular and such that
$$ \sup_{1\leq l \leq q} \Eb\Big[ \big(w_n^{(l)}(\theta_{n,0},\UU) - w_\infty^{(l)}(\UU) \big)^2 \Big] \rightarrow 0 \; \text{as} \; n \rightarrow \infty.$$
\label{regularity_AN_Qn} 
\end{assumption}
Denote $\Wc_\infty:= \{ w_\infty^{(l)}, l=1,\dots,q\}$. The use of the sequence of matrices $(Q_n)$ is classical and inspired here by Theorem 2 of~\citep{fan2004nonconcave}. This technicality allows to obtain convergence towards a finite $q$-dimensional distribution. A similar technique was employed in Theorem 3.2 of~\citep{portnoy1985}, which established the normal distribution of the least squares based M-estimator when the dimension diverges. Our regularity assumptions differ from the latter ones, due to different techniques of proofs. For instance, Theorem 2 of~\citep{fan2004nonconcave} assume $p_n^5/n=o(1)$ and imposes the convergence of $Q_n Q_n^\top$. In our case, we need $p_n^4/n=o(1)$ and the boundedness of the sequence of row-sum norms $\| Q_n \|_{\text{row}}$ (Assumption~\ref{Qn_regular_sum}).
In light of our criterion, since Theorem~\ref{Th_fondam_multiv_rank_stat} will be applied to $n^{-1/2}\sum^{n}_{i=1}\sum_{r\in \Ac_n} q_{n,l,r} \partial_{\theta_{n,r}} \ell_n (\theta_{n,0},\widehat{\UU}_i)$, $l \in \{1,\ldots,q\}$, contrary to our Theorem~\ref{oracle_property} that applied the same corollary to $n^{-1/2}\sum^{n}_{i=1}\partial_{\theta_{j}} \ell(\theta_{0},\widehat{\UU}_i)$, $j \in \Ac$, this motivates Assumptions~\ref{Qn_regular_sum} and~\ref{regularity_AN_Qn}. 


\begin{theorem}\label{oracle_property_n}
In addition to the conditions of Theorem~\ref{bound_proba_n},
assume that $\lambda_n \rightarrow 0$, $p_n a_n=o(\lambda_n)$, $\sqrt{n} \lambda_n / (p_n\ln(\ln n)) \rightarrow \infty$ and 
$\underset{n \rightarrow \infty}{\lim \, \inf} \; \underset{x \rightarrow 0^+}{\lim \; \inf} \, \lambda^{-1}_n \partial_2\pp(\lambda_n,x) > 0$.
Then, the consistent estimator $\widehat{\theta}_n$ given by~(\ref{obj_crit_n}) satisfies the following properties.
\begin{itemize}
    \item[(i)] \textcolor{black}{Sparsity}: $\underset{n \rightarrow \infty}{\lim} \; \Pb(\widehat\theta_{n}^{(2)} = \theta_{n,0}^{(2)}) = 1$.
    \item[(ii)] Asymptotic normality: in addition, assume $\sqrt{n}\lambda_n^2=o(1)$ and 
    Conditions~\ref{Qn_regular_sum}-\ref{regularity_AN_Qn} hold. Moreover,~\ref{cond_wc_empir_process}-\ref{regularity_cond_wc_stronger} in Appendix \ref{technicalities} of the main \textcolor{black}{text} are met, replacing $\Fc$ with $\bigcup_n\Fc_{1n}\cup \Wc_\infty$. 
    Then, we have
    \begin{equation*}
    \sqrt{n}Q_n\Big[\Hb_{n,\Ac_n\Ac_n}+\mathbf{B}_n(\theta_{n,0})\Big]\Big\{\big(\widehat{\theta}^{(1)}_n-\theta_{n,0}^{(1)}\big) + \big[\Hb_{n,\Ac_n\Ac_n}+\mathbf{B}_n(\theta_{n,0})\big]^{-1}\mathbf{A}_n(\theta_{n,0}) \Big\} \overset{d}{\underset{n \rightarrow \infty}{\longrightarrow}} \YY,
    \end{equation*}
    where $\Hb_{n,\Ac_n\Ac_n} := \Big[\Eb\big[ \partial^2_{\theta_{n,k}\theta_{n,l}}\ell(\theta_{n,0};\UU)\big] \Big]_{k,l \in \Ac_n}$,
    $\normalfont\mathbf{A}_n(\theta_{n}) = \big[\partial_2\pp(\lambda_n,|\theta_{n,k}|)\text{sgn}(\theta_{n,k})\big]_{k \in \Ac_n}$,
    $\mathbf{B}_n(\theta_n) =\normalfont \text{diag}(\partial^2_{2,2}\pp(\lambda_n,|\theta_{n,k}|),\,k\in\Ac_n)$ and $\YY$ a $q$-dimensional random vector whose $j$-th component, $j\in \{1,\ldots,q\}$, is
    \begin{equation*}
    Y_j := (-1)^d\int_{(0,1]^d} \Cb(\uu) \, w_\infty^{(j)}(d\uu)+
\sum_{ \substack{I \subset \{1,\ldots,d\} \\ I\neq \emptyset, I\neq \{1,\ldots,d\} } } (-1)^{|I|}
  \int_{(0,1]^{|I|}} \Cb(\uu_{I};\1_{-I}) \, w_\infty^{(j)}(d\uu_{I};\1_{-I}).
    \end{equation*}
\end{itemize}
\end{theorem}
\textcolor{black}{Note that the property (i) is related to the zero coefficients of the true parameters $\theta_{n,0}$, as in~\citep{fan2004nonconcave}. 
There is a subtle difference with the oracle property established for a fixed dimension, where both true zero and non-zero coefficients are correctly identified, a property termed ``support recovery''. Herer, the so-called ``sparsity property'' actually does not preclude the possibility that, for every $n$, some components of $\theta_{n,0}$'s support may be estimated as zero.}

\begin{remark}
Assume the minimum signal condition (H) of~\citep{fan2004nonconcave}, i.e., $\min_{k \in \Ac_n}|\theta_{n,0,k}|/\lambda_n \rightarrow \infty$ as $n \rightarrow \infty$ - a condition standard for sparse estimation with diverging dimensions - is satisfied. 
Such a property is closely related to the unbiasedness property for non-convex penalization: see, e.g., condition 2.2.-(vii) of~\citep{loh2017}.  
Then, if the penalty is SCAD or MCP, the quantities $\mathbf{A}_n(\theta_{n,0}), \mathbf{B}_{n}(\theta_{n,0})$ are zero when $n$ is sufficiently large. Therefore, the conclusion of Theorem~\ref{oracle_property_n} (ii) becomes 
    \begin{equation*}
    \sqrt{n}Q_n \Hb_{n,\Ac_n\Ac_n} \big(\widehat{\theta}^{(1)}_n-\theta_{n,0}^{(1)}\big) 
    \overset{d}{\underset{n \rightarrow \infty}{\longrightarrow}} \YY.
    \end{equation*}
\end{remark}
\begin{remark}
\textcolor{black}{It can be checked that our assumptions in Theorem~\ref{oracle_property_n} can be satisfied by some sequences $(p_n,\lambda_n,a_n)$. 
Restricting ourselves to some powers of $n$, set $p_n=[ n^a]$, $\lambda_n=n^{-b}$ and $a_n=n^{-c}$, for some positive constants $a$, $b$ and $c$.
The subset $$\{ (a,b,c) \in \Rb^3 \,|\, b>\frac{1}{4}, 0<a <\frac{1}{4}, a+b<\min(\frac{1}{2},c) \}$$
yields an acceptable choice for $(p_n,\lambda_n,a_n)$.}
\end{remark}
\begin{proof}
\emph{Point (i):} The proof of (i) follows exactly the same lines as the proof of the first point in Theorem~\ref{oracle_property}.
Due to the diverging number of parameters and Theorem~\ref{bound_proba_n}, we now consider $\nu_n:=\sqrt{p_n}\big\{n^{-1/2}\ln(\ln n)+a_n\}$.
By the same reasoning as above, particularly Equation~(\ref{bound_oracle}), the result is proved if we satisfy
\begin{equation}
\ln(\ln n)\sqrt{n}  + n\sqrt{p_n}\nu_n + np_n\nu_n^2  <<
n \inf_{j \in \Ac^c_n}  \partial_2 \pp(\lambda_n,|\widehat\theta_{n,j}|)\text{sgn}(\widehat\theta_{n,j}),
\label{oracle_cond_n}
\end{equation}
keeping in mind that the estimated parameters $\widehat\theta_n$ we consider satisfies $\max\{ |\widehat\theta_{n,j}|; j \in \Ac^c_n \}=O_P( \nu_n)=o_P(1)$.
Note that we have invoked an uniform upper bound for the second and third order partial derivatives of the loss (Assumption~\ref{regularity_assumption_n}).
It can be easily checked that~(\ref{oracle_cond_n}) is satisfied under our assumptions on the sequence $(p_n,\lambda_n,a_n)$.
\textcolor{black}{Indeed,~(\ref{oracle_cond_n}) if satisfied if 
$$\frac{\ln(\ln n)}{\sqrt{n}}  + \sqrt{p_n}\nu_n + p_n\nu_n^2  << \lambda_n  .$$
Since $p_n \ln (\ln n)/\sqrt{n}=o(1)$ and $p_n a_n=o(1)$, then $p_n\nu_n^2 = o(\sqrt{p_n}\nu_n)$. 
Is is then sufficient to check that
$$ p_n \big( \frac{\ln (\ln n)}{\sqrt{n}} + a_n \big) << \lambda_n ,$$
that is a direct consequence of our assumptions.
}
\mds

\noindent \emph{Point (ii):} We proved \textcolor{black}{the sparsity property $\lim_{n \rightarrow \infty} \; \Pb(\widehat\theta_{n}^{(2)} = \theta_{n,0}^{(2)}) = 1$}. Therefore, for any $\epsilon>0$,
the event $\widehat{\theta}_{n,\Ac_n^c}=\mathbf{0} \in \Rb^{|\Ac_n^c|}$ occurs with a probability larger than $1-\epsilon$ for $n$ large enough.
Since we want to state a convergence in law result, we can consider that the latter event is satisfied everywhere.
By a Taylor expansion around the true parameter, as in the proof of Theorem~\ref{oracle_property}, and after multiplying by the matrix $Q_n$,
we get
\begin{eqnarray*}
\lefteqn{\sqrt{n}Q_n\Kb_n(\theta_{n,0})\Big\{\big(\widehat{\theta}_n-\theta_{n,0}\big)_{\Ac_n} + \mathbf{A}_n(\theta_{n,0})\Big\} =  - \frac{1}{\sqrt{n}}Q_n\nabla_{\theta_{n,\Ac_n}} \Lb_n(\theta_{n,0};\widehat{\Uc}_n) }\\
&-&  \frac{1}{2\sqrt{n}}Q_n\nabla_{\theta_{n,\Ac_n}}
\Big\{(\widehat\theta_n-\theta_{n,0})^{\top}_{\Ac_n} \nabla^2_{\theta_{\Ac_n}\theta^{\top}_{\Ac_n}} \Lb_n(\overline{\theta}_n;\widehat{\Uc}_n) \Big\}(\widehat\theta_n-\theta_{n,0})_{\Ac_n}+o_p(1) \\
& =: & R_1+R_2+o_p(1),
\end{eqnarray*}
where $\Kb_n(\theta_{n,0}) := n^{-1}\nabla^2_{\theta_{n,\Ac_n}\theta^\top_{n,\Ac_n}} \Lb_n(\theta_{n,0};\widehat{\Uc}_n)+\mathbf{B}_n(\theta_0)$.
Obviously, $\overline{\theta}_n$ is a random parameter such that $\|\overline{\theta}_{n,\Ac_n} - \theta_{n,0,\Ac_n}\|_2 < \|\widehat{\theta}_{n,\Ac_n} - \theta_{n,0,\Ac_n}\|_2$.
Due to Assumptions~\ref{assumption_regularity_n} and~\ref{Qn_regular_sum}, the family $\bigcup_{n\geq 1} \Gc_n$ of maps from $(0,1)^d$ to $\Rb$ defined as
$$\Gc_{n}:=\{ f:\uu\mapsto \sum_{r\in \Ac_n} q_{n,l,r} \partial^2_{\theta_{n,r}\theta_{n,k}}\ell_n(\theta_{n,0};\uu); l=1,\ldots,q; k\in \Ac_n\},\, n\geq 1,$$
is $g_\omega$-regular, with the same $\omega$ as in Assumption~\ref{assumption_regularity_n}.
Invoking Corollary~\ref{cor_GC}, we obtain
\begin{equation*}
\| Q_n\Kb_n(\theta_{n,0})- Q_n\Hb_{\Ac_n\Ac_n^\top}- Q_n \mathbf{B}_n(\theta_0)\|_\infty= o_p(1).
\label{approx_QnKn}
\end{equation*} 
Second, the third order term $R_2$ is a vector as size $q$ whose $i$-th component, $i\in \{1,\ldots,q\}$, is
$$ R_{2,i}:=-\frac{1}{2\sqrt{n}}\underset{j\in \Ac_n}{\sum} q_{n,i,j} \underset{l,m \in \Ac_n}{\sum}\partial^3_{\theta_j \theta_l\theta_m} \Lb_n(\overline{\theta}_n;\widehat{\Uc}_n) (\widehat\theta_{n,l}-\theta_{n,0,l})(\widehat\theta_{n,m}-\theta_{n,0,m}). $$
Conditions~\ref{assumption_regularity_n} and~\ref{Qn_regular_sum} imply that the maps 
$$ \uu \rightarrow 
\underset{j\in \Ac_n}{\sum} q_{n,i,j} \partial^3_{\theta_j \theta_l\theta_m} \pp_n(\overline{\theta}_n;\uu), \; 
i\in \{1,\ldots,q\}, l,m \in \Ac_n,$$
are $g_\omega$-regular. 
Then, apply Corollary~\ref{cor_GC} to the latter family. This yields $R_{2} - \bar R_{2}=O_P(\ln (\ln n)/\sqrt{n})$, by setting 
$$ \bar R_{2,i} := -\frac{n}{2\sqrt{n}}\underset{j\in \Ac_n}{\sum} q_{n,i,j} \underset{l,m \in \Ac_n}{\sum} 
\Eb\big[ \partial^3_{\theta_j \theta_l\theta_m}\ell_n(\overline{\theta}_n;\UU)\big] (\widehat\theta_{n,l}-\theta_{n,0,l})(\widehat\theta_{n,m}-\theta_{n,0,m}),$$
for every $i\in \{1,\ldots,q\}$.
By Assumption~\ref{regularity_assumption_n}, we get 
$$ \bar R_{2} =O_P\Big(  \sqrt{n}\| \widehat \theta_n - \theta_{n,0} \|_1^2 \Big) = O_P\Big(  \sqrt{n}p_n\| \widehat \theta_n - \theta_{n,0} \|_2^2 \Big)= O_P\Big( \sqrt{n}p^2_n \big(\frac{\ln(\ln n)}{n} + a_n^2 \big) \Big).$$
With our assumptions about $(a_n,\lambda_n,p_n)$, we obtain $R_2=o_P(1)$.

It remains to show that $R_1$ is asymptotically normal. To this end, note that 
$$  R_{1,j}=- \frac{1}{\sqrt{n}}\sum_{i=1}^n \underset{r \in \Ac_n}{\sum} q_{n,j,r} \partial_{\theta_{n,r}}\ell_n (\theta_{n,0};\widehat{\UU}_i) 
= - \frac{1}{\sqrt{n}}\sum_{i=1}^n w_n^{(j)}(\theta_{n,0}; \widehat \UU_i), $$
for any $j\in \{1,\ldots,q\}$.
Setting 
$w_{n,0}(\cdot):=\big[w_n^{(1)}(\theta_{n,0};\cdot),\ldots ,w_n^{(1)}(\theta_{n,0};\cdot)\big],$
this implies that $R_1=-\sqrt{n}\int w_{n,0} \, d\widehat \Cb_n $, due the first order conditions. 

\mds 

Now, denote $\Wc:=\bigcup_{n\geq 1}\{ w_n^{(l)}(\theta_{n,0},\cdot), l=1,\dots,q\} \cup \{ w_\infty^{(l)}, l=1,\dots,q\}$.
Remind that $\Wc_\infty$ is $g_\omega$-regular (Assumption~\ref{regularity_AN_Qn}). 
Thus, the family of maps $\Wc$ is $g_\omega $-regular, due to the $g_\omega$-regularity of $\bigcup_n \Fc_{n,1}$ (Assumption~\ref{assumption_regularity_n}) and Assumption~\ref{Qn_regular_sum}. 
Moreover, $\Wc$ satisfies Assumption~\ref{regularity_cond_wc_stronger} (replacing $\Fc$ with $\Wc$). 
Therefore, all the conditions of application of Theorem~\ref{Th_fondam_multiv_rank_stat} (iii) are fullfilled and
$\widehat \Cb_n$ is weakly convergent in $\ell^\infty(\Wc)$.
By the stochastic equicontinuity of the process $\widehat\Cb_n$ defined on $\Wc$, 
the random vector $ \int w_n \, d\widehat \Cb_n $ weakly tends to $ \int w_\infty \, d\widehat \Cb_n $, with obvious notations.

\mds

We then conclude with an application of Slutsky's Theorem to deduce the following asymptotic distribution:
\begin{eqnarray*}
\sqrt{n}Q_n\Big[\Hb_{n,\Ac_n\Ac_n}+\mathbf{B}_n(\theta_0)\Big]\Big\{\big(\widehat{\theta}_n-\theta_{n,0}\big)_{\Ac_n} + \Big[\Hb_{n,\Ac_n\Ac_n}+\mathbf{B}_n(\theta_{n,0})\Big]^{-1}\mathbf{A}_n(\theta_{n,0})\Big\}
& \overset{d}{\underset{n \rightarrow \infty}{\longrightarrow}} & \mathbf{Y},
\end{eqnarray*}
where $\mathbf{Y}$ is the $q$-dimensional random vector defined in the statement of the theorem.
\end{proof}

\section{Additional simulated experiment}\label{additional_experiment_gaussian_cop}

In this subsection, we investigate how the calibration of $a_{\text{scad}}$ and $b_{\text{mcp}}$ alters the performances of the SCAD and MCP penalty functions, respectively. Following the experiment on sparse Gaussian copula performed in the main \textcolor{black}{text}, we set $d=10$ and specify \textcolor{black}{two true sparse correlation matrices $\Sigma_{0,1}, \Sigma_{0,2}$. The true parameters $\theta_{0,1}, \theta_{0,2}$, which stack the lower triangular part of $\Sigma_{0,1}, \Sigma_{0,2}$, respectively,} excluding the diagonal terms, belong to $\Rb^p$ with $p=d(d-1)/2=45$. The number of zero coefficients in \textcolor{black}{$\theta_{0,1}, \theta_{0,2}$ is $38$}, i.e., approximately $85\%$ of their total number of entries \textcolor{black}{are zero coefficients. The non-zero entries are generated from the uniform distribution $\Uc([-0.7,-0.05]\cup[0.05,0.7])$. For them, we have $\max_{k \in \Ac}\,|\theta_{0,1,k}| = 0.4417$, $\min_{k \in \Ac}\,|\theta_{0,1,k}| = 0.0631$; $\max_{k \in \Ac}\,|\theta_{0,2,k}| = 0.6518$, $\min_{k \in \Ac}\,|\theta_{0,2,k}| = 0.1041$. Moreover, the true vector $\theta_{0,1}$ contains three non-zero entries smaller (in absolute value) than $0.1$. The latter parameters $\theta_{0,1}$ and $\theta_{0,2}$ will be left fixed hereafter.} 

Then, for the sample size $n=500$, we draw $\UU_i \in \Rb^d$, $i\in \{1,\ldots,n\}$, from the sparse Gaussian copula with parameter \textcolor{black}{$\Sigma_{0,1}$} and apply the rank-based transformation to obtain the $\widehat{\UU}_i$. \textcolor{black}{Equipped with} the pseudo-sample $\widehat{\UU}_i$, $i\in \{1,\ldots,n\}$, we solve the same penalized criteria as in the main \textcolor{black}{text} for the Gaussian copula (Gaussian loss and least squares loss) with SCAD and MCP penalty functions. \textcolor{black}{We will use a} grid of different $a_{\text{scad}}$ and $b_{\text{mcp}}$ values: $a_{\text{scad}} \in\{ 2.1, 2.5, 3, 3.5, \ldots,\textcolor{black}{25}\}$ and $b_{\text{mcp}} \in \{ 0.1, 0.5, 1, 1.5,\ldots,\textcolor{black}{25}\}$. We repeat this procedure for $100$ independent batches. \textcolor{black}{The same experiment is performed with the true Gaussian copula parameter $\Sigma_{0,2}$}. 

\textcolor{black}{Figure~\ref{c1_c2_case1} and Figure~\ref{mse_case1} display the metrics C1/C2 and MSE, respectively, averaged over these $100$ batches, when the true parameter is $\Sigma_{0,1}$. Figure~\ref{c1_c2_case2} and Figure~\ref{mse_case2} display the same metrics when the true parameter is $\Sigma_{0,2}$.} 
For instance, on Figure~\ref{scad_c1_c2_case1}, the red solid line represents the percentage of the true zero coefficients in \textcolor{black}{$\theta_{0,1}$} that are correctly recovered by $\widehat{\theta}$ when deduced from the Gaussian loss penalized by the SCAD penalty for different values of $a_{\text{scad}}$ and averaged over the $100$ batches. Similarly, on Figure~\ref{mcp_c1_c2_case1}, the blue dash-dotted line represents the percentage of the true non-zero coefficients in \textcolor{black}{$\theta_{0,1}$} correctly recovered by $\widehat{\theta}$ when deduced from the least squares loss penalized by the MCP penalty for some values of $b_{\text{mcp}}$ and averaged over the $100$ batches. 

Figures \ref{scad_c1_c2_case1} and \ref{mcp_c1_c2_case1} highlight the existence of a trade-off between the recovery of the zero and non-zero coefficients, \textcolor{black}{particularly} for the Gaussian loss: smaller $a_{\text{scad}}$ and $b_{\text{mcp}}$ provide better C1 but to the detriment of C2, whereas larger values imply worse C1, but better C2. \textcolor{black}{Interestingly, a different recovery pattern of the true non-zero coefficients is displayed in Figure~\ref{scad_c1_c2_case2} and Figure~\ref{mcp_c1_c2_case2}: the metric C2 is close to $100\%$ for all $a_{\text{scad}}, b_{\text{mcp}}$ (excluding the values $b_{\text{mcp}}<1.5$ in the MCP case). Since $\theta_{0,2}$ contains a true minimum signal $\min_{k \in \Ac}\,|\theta_{0,2,k}|$ sufficiently large, the penalization procedure can correctly recover all its non-zero entries. On the other hand, $\theta_{0,1}$ includes three values smaller than $0.1$. In that case, the penalization tends to estimate the true small non-zero entries as zero entries, thus worsening the C2 metric. The pattern of C1 for both penalty functions and loss functions is not much affected by $\theta_{0,1}, \theta_{0,2}$. For both $\theta_{0,1}, \theta_{0,2}$ cases, and for both SCAD and MCP,} the Gaussian loss-based criterion tends to generate more zero coefficients than the least squares-based criterion. Furthermore, the performances in terms of estimation accuracy (MSE) are in favor of the Gaussian loss-based criterion. Larger $a_{\text{scad}}$ and $b_{\text{mcp}}$ values result in larger MSE, which is in line with the findings in the main \textcolor{black}{text: large $a_{\text{scad}}$ and $b_{\text{mcp}}$ values result in a LASSO penalization, a situation where the penalty increases linearly with respect to the absolute value of the coefficient, thus generating more bias}. \textcolor{black}{Moreover, the MSE metric is smaller when the true parameter is $\theta_{0,2}$: compare Figure~\ref{mse_case1} and Figure~\ref{mse_case2}. This is in line with the fact that the support recovery is better for $\theta_{0,2}$ than for $\theta_{0,1}$.
}

A point worth mentioning are the poor performances of the MCP method for too small values of $b_{\text{mcp}}$. In the latter case, the set $\{|\theta|>b_{\text{mcp}}\lambda_n\}$ is likely to be active, and the penalty becomes $\lambda^2_n\,b_{\text{mcp}}/2$, so that the penalization is almost vanishing, which results in lower C1 and larger C2, as depicted in Figure~\ref{mcp_c1_c2_case1} when $0<b_{\text{mcp}}<1$. But a low C1 implies that the penalization misses a large number of true zero entries, which prompts large MSE patterns. 

\mds 

\textcolor{black}{This simulated experiment on sparse Gaussian copula} suggests the existence of an interval of ``optimal'' values of $a_{\text{scad}}$ and $b_{\text{mcp}}$, where optimality refers to the ideal situation of perfect recovery of the zero and non-zero coefficients with a low MSE. Such coeffficients should not be too small and too large: for the SCAD, $a_{\text{scad}} \in (3,\textcolor{black}{9})$ provides an optimal compromise between C1 and C2 with low MSE; this is the case for the MCP when $b_{\text{mcp}} \in (2,9)$.

\begin{figure}[!ht]
\centering
\subfloat[SCAD \label{scad_c1_c2_case1}]{\includegraphics[width=7.1cm,height=5.2cm]{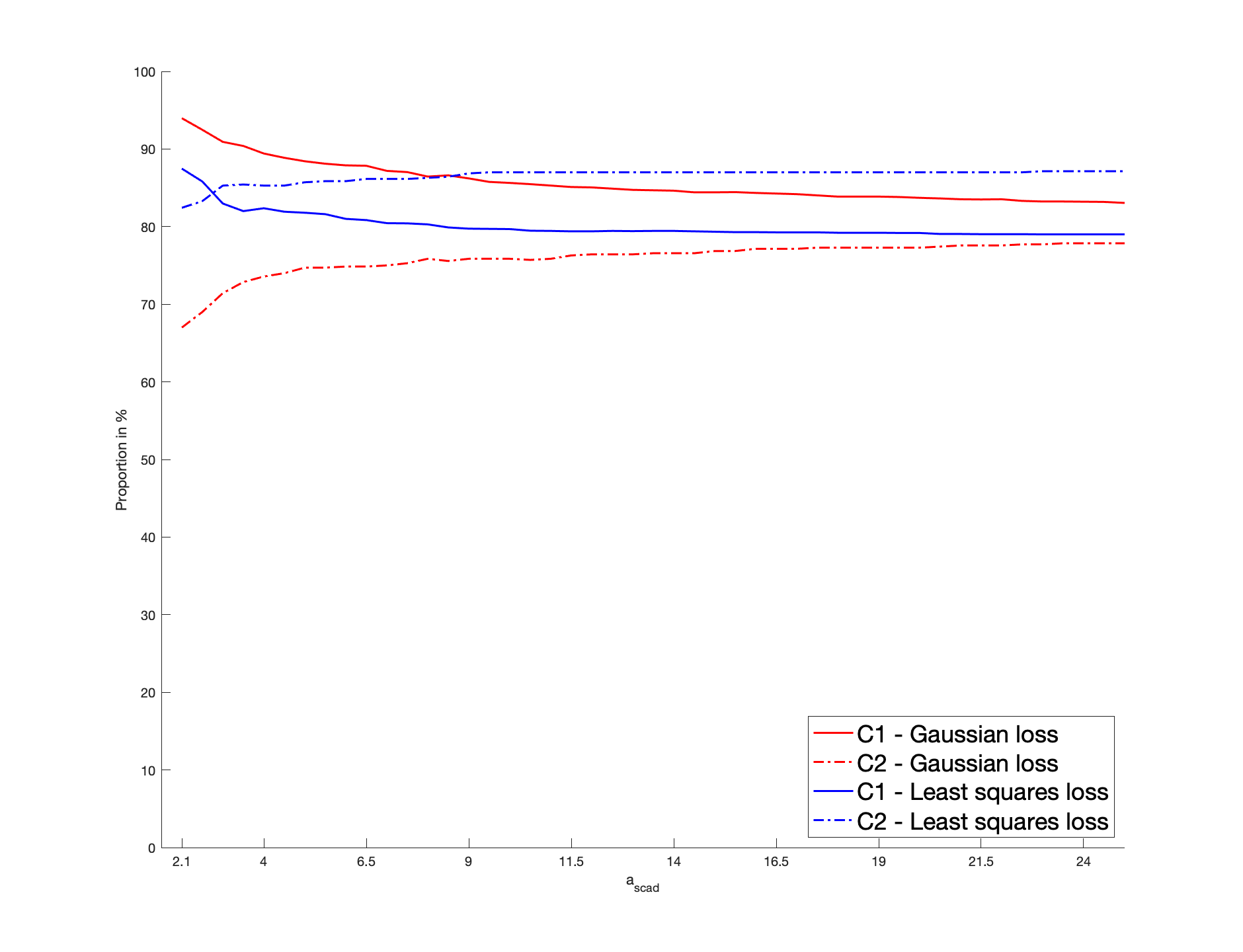}}
\subfloat[MCP \label{mcp_c1_c2_case1}]{\includegraphics[width=7.1cm,height=5.2cm]{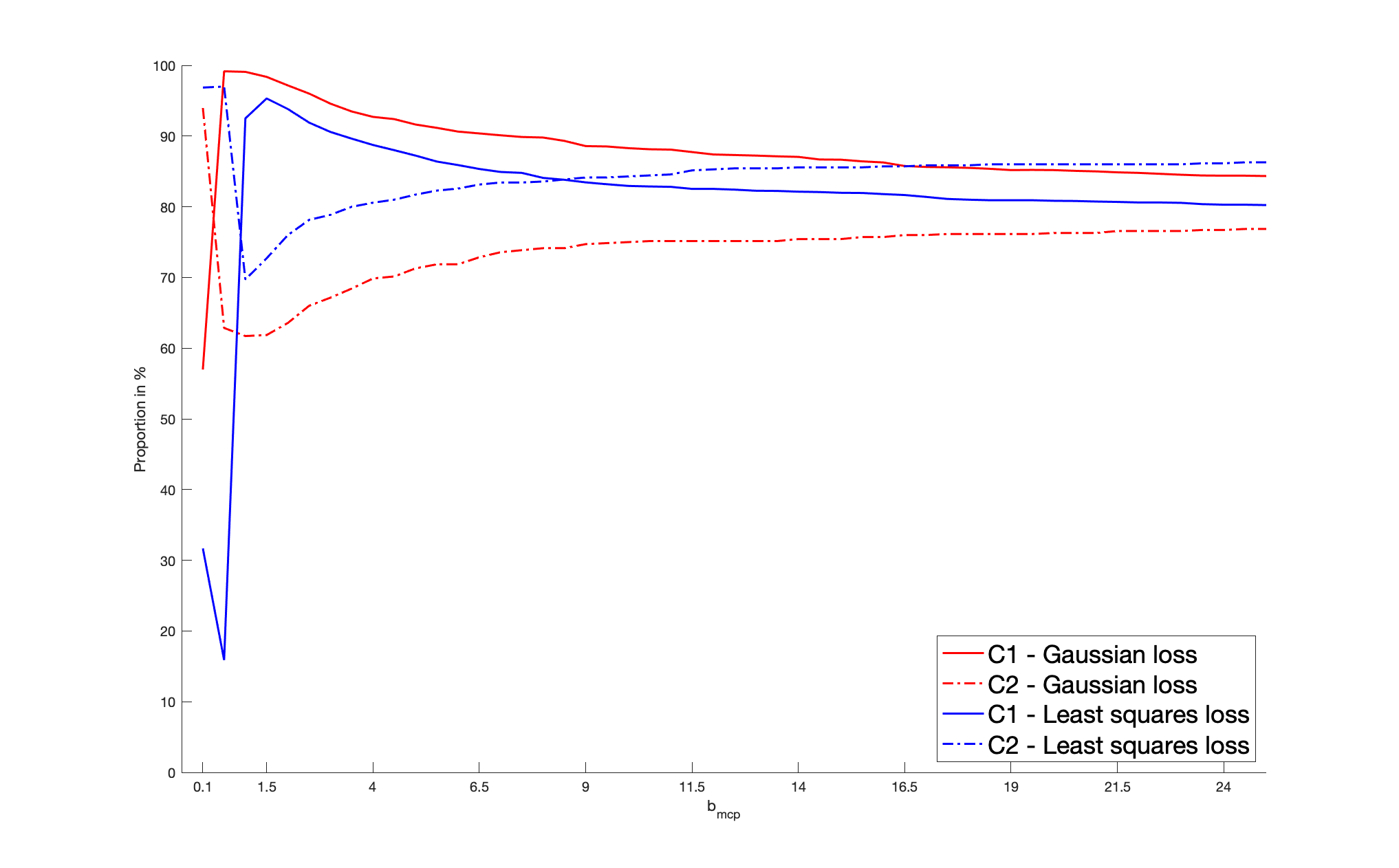}} \caption{Percentage of zero (C1) and non-zero (C2) coefficients that are correctly estimated. Each point represents an average over $100$ batches. \textcolor{black}{The experiment based on $\Sigma_{0,1}$}.} \label{c1_c2_case1}
\end{figure}
\begin{figure}[!ht]
\centering
\subfloat[SCAD]{\includegraphics[width=7.1cm,height=5.2cm]{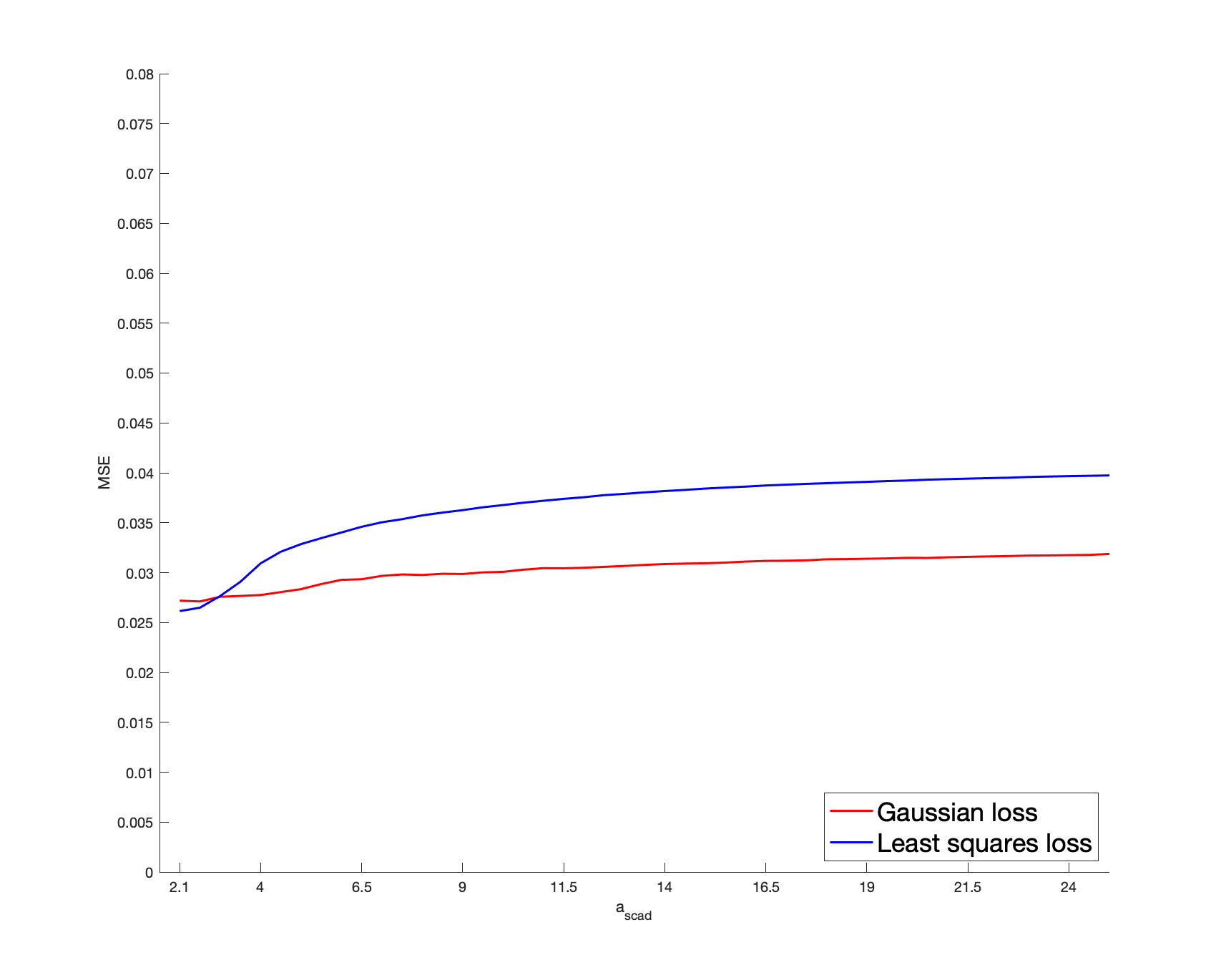}}
\subfloat[MCP]{\includegraphics[width=7.1cm,height=5.2cm]{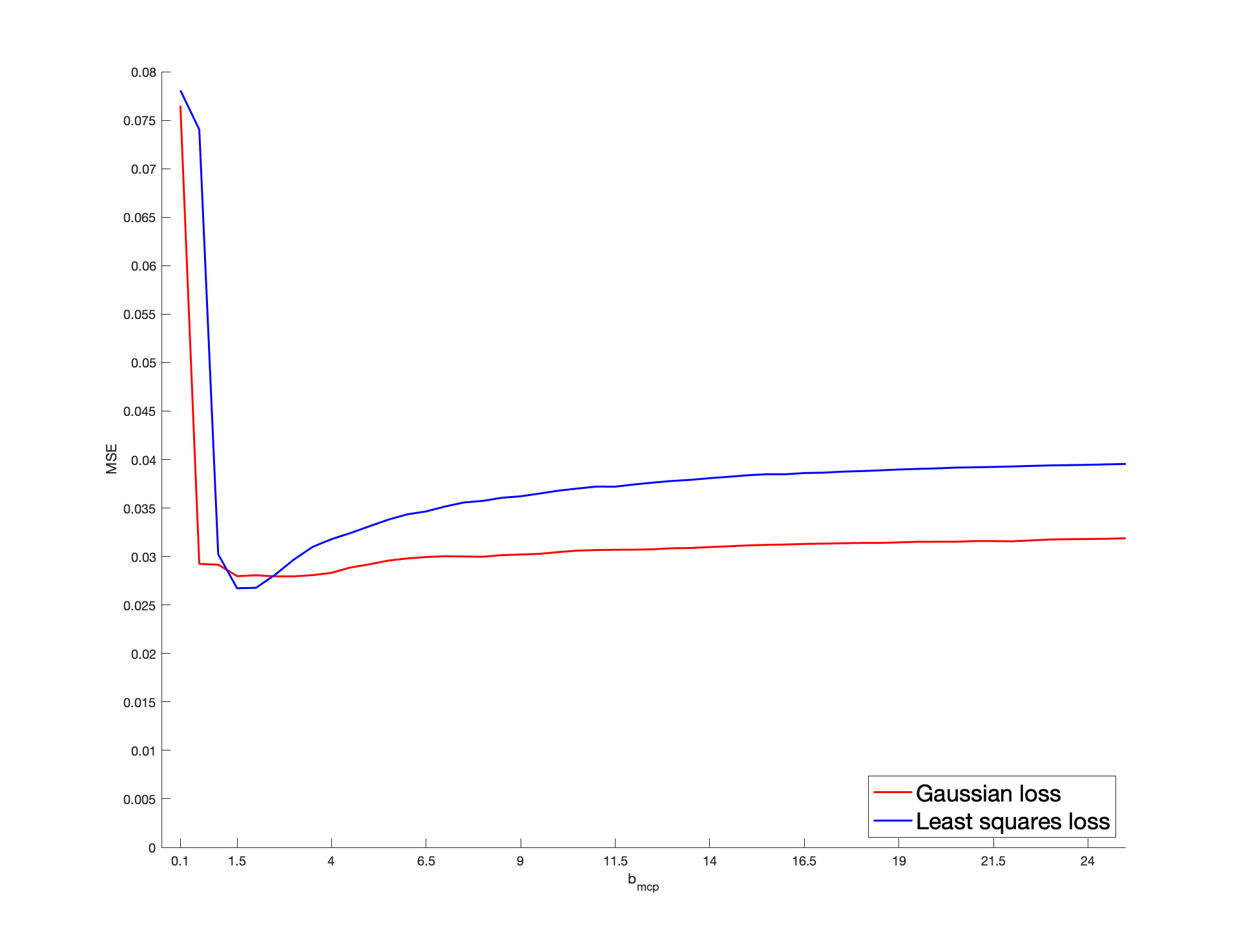}} \caption{Mean squared errors. Each point represents an average over $100$ batches. \textcolor{black}{The experiment based on $\Sigma_{0,1}$}.} \label{mse_case1}
\end{figure}
\begin{figure}[!ht]
\centering
\subfloat[SCAD \label{scad_c1_c2_case2}]{\includegraphics[width=7.1cm,height=5.2cm]{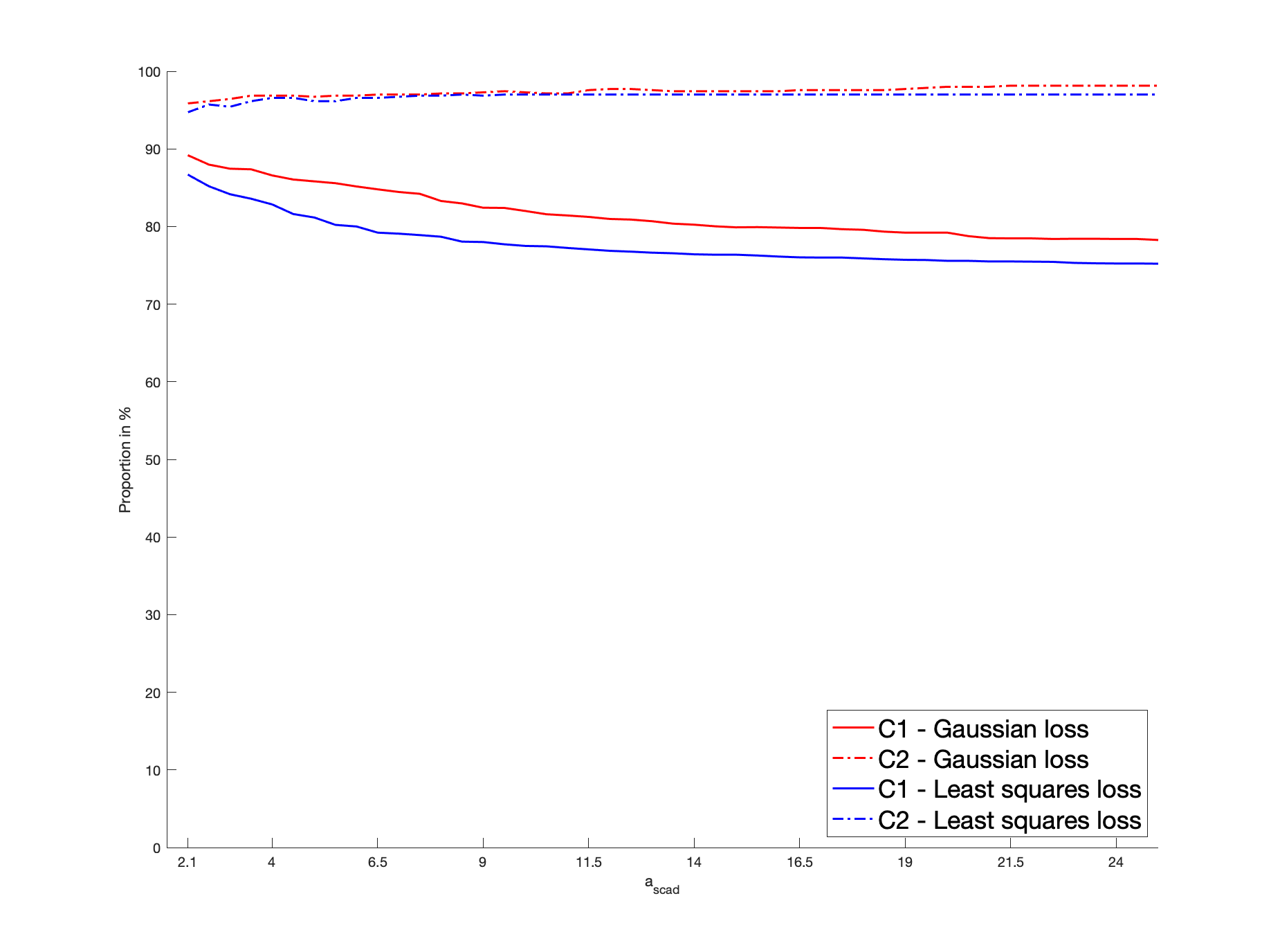}}
\subfloat[MCP \label{mcp_c1_c2_case2}]{\includegraphics[width=7.1cm,height=5.2cm]{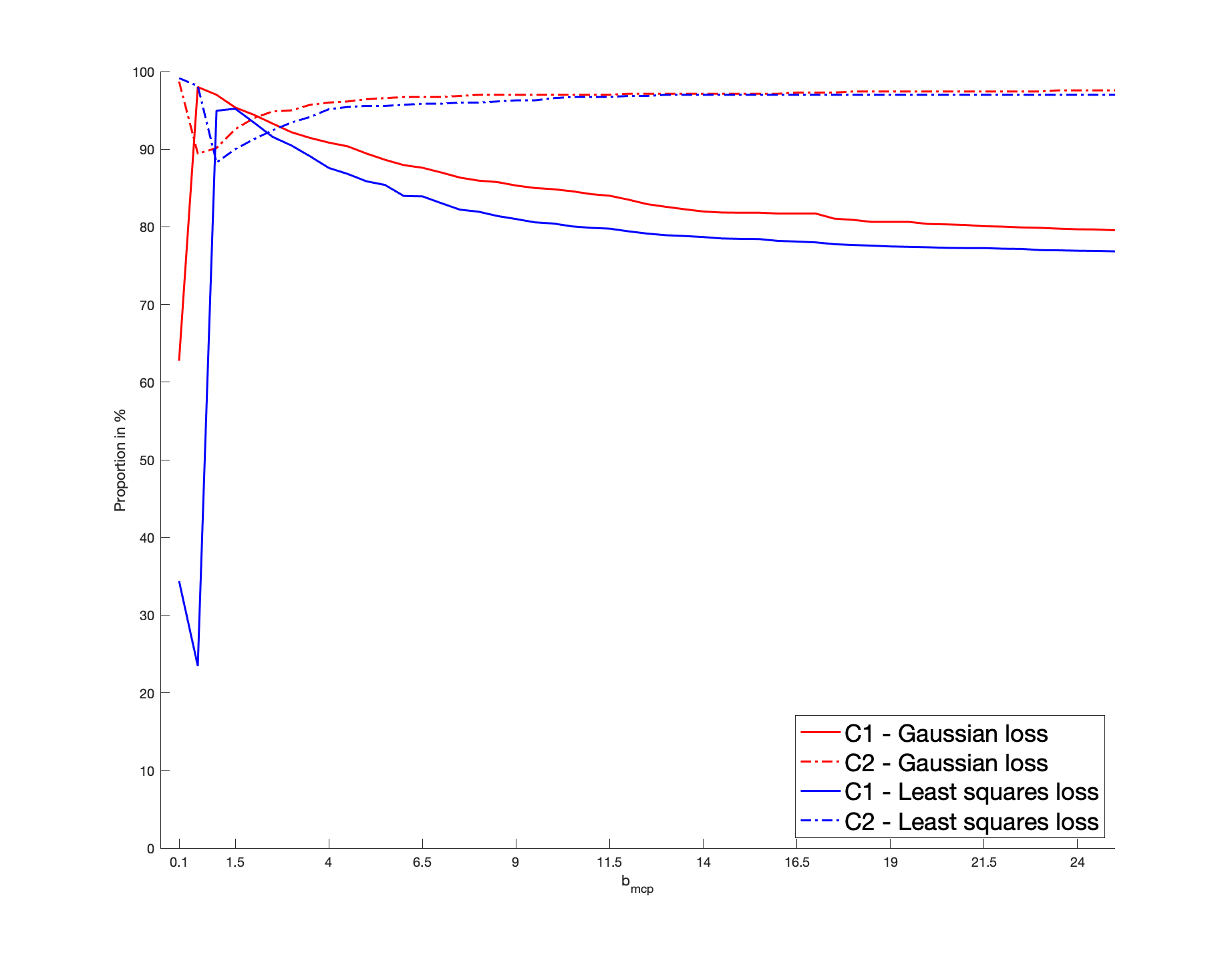}} \caption{Percentage of zero (C1) and non-zero (C2) coefficients that are correctly estimated. Each point represents an average over $100$ batches. \textcolor{black}{The experiment based on $\Sigma_{0,2}$}.} \label{c1_c2_case2}
\end{figure}
\begin{figure}[!ht]
\centering
\subfloat[SCAD]{\includegraphics[width=7.1cm,height=5.2cm]{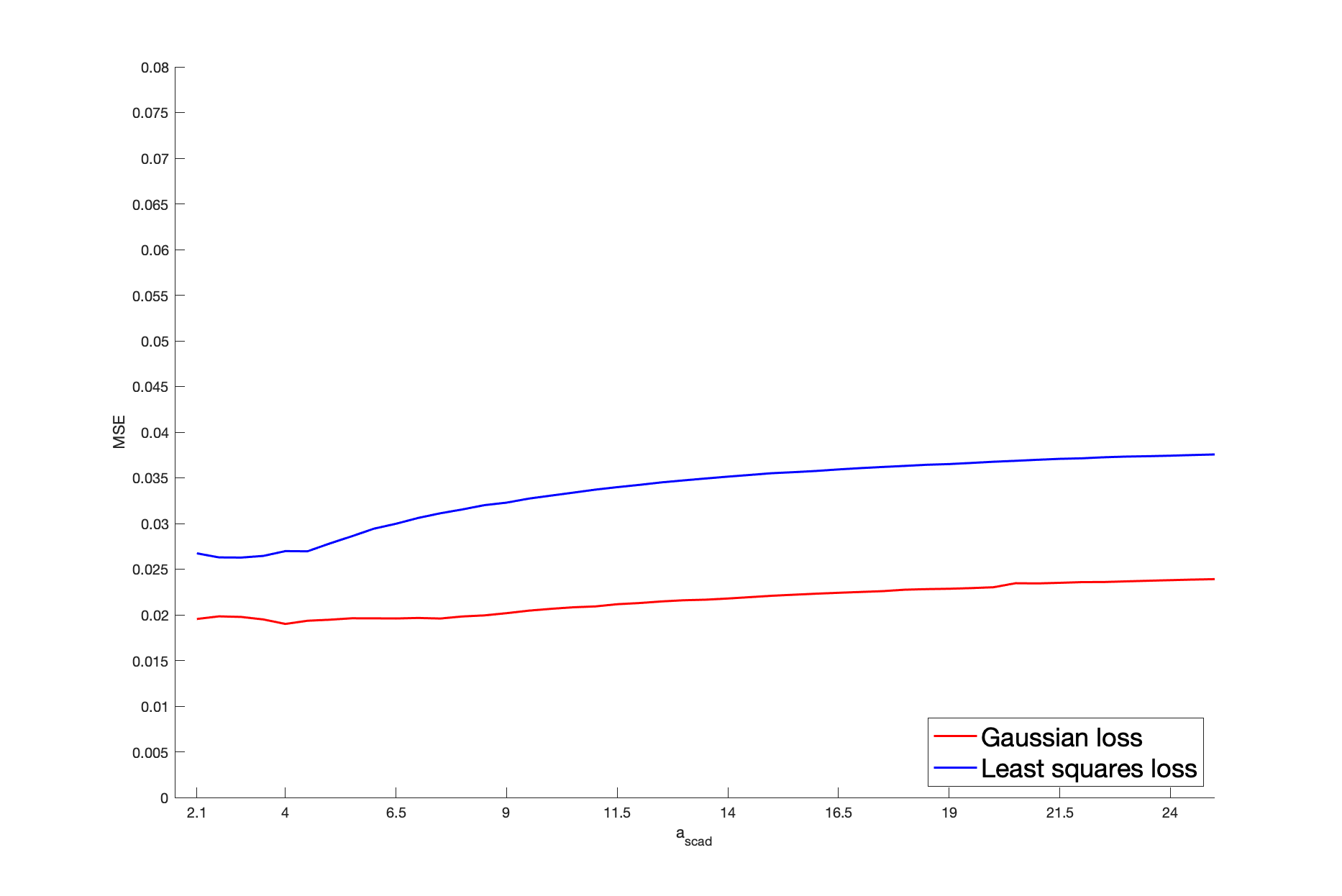}}
\subfloat[MCP]{\includegraphics[width=7.1cm,height=5.2cm]{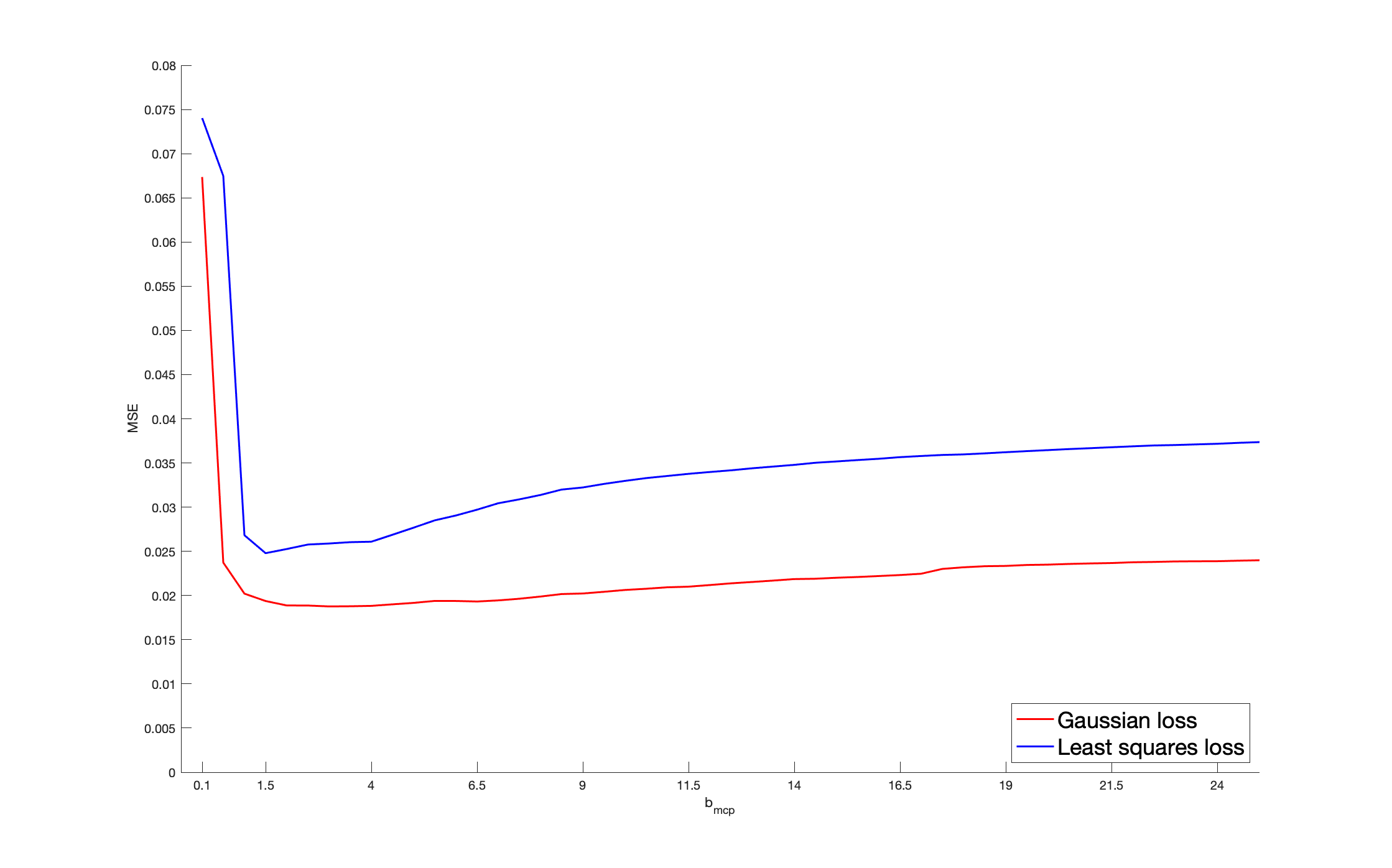}} \caption{Mean squared errors. Each point represents an average over $100$ batches. \textcolor{black}{The experiment is based on $\Sigma_{0,2}$}.} \label{mse_case2}
\end{figure}

\end{document}